\theoremstyle{plain}
\newtheorem{thm}{Theorem}[section]
\newtheorem{cor}[thm]{Corollary}
\newtheorem{lem}[thm]{Lemma}
\newtheorem{prop}[thm]{Proposition}
\newtheorem{prob}{Problem}[section]
\def\@rst #1 #2other{#1}
\newcommand\MR[1]{\relax\ifhmode\unskip\spacefactor3000 \space\fi
  \MRhref{\expandafter\@rst #1 other}{#1}}
\newcommand{\MRhref}[2]{\href{http://www.ams.org/mathscinet-getitem?mr=#1}{MR#2}}
\theoremstyle{definition}
\newtheorem{defn}[thm]{Definition}
\newtheorem{remark}[thm]{Remark}
\newtheorem{notation}[thm]{Notation}
  \newcounter{lst}
\newenvironment{lst}{%
\refstepcounter{lst}%
\begin{center}
\begin{minipage}{.9\textwidth}}{%
\end{minipage}%
\makebox[.1\textwidth][r]{(\thelst)}%
\end{center}}
\newcommand{\neutralize}[1]{\expandafter\let\csname c@#1\endcsname\count@}
\numberwithin{equation}{section}
\newcommand{\dsb}{\begin{adjustwidth}{2.5em}{0pt}
\begin{footnotesize}}
\newcommand{\dse}{\end{footnotesize}
\end{adjustwidth}}
\newcommand{\ssb}{\begin{adjustwidth}{2.5em}{0pt}}
\newcommand{\sse}{\end{adjustwidth}}
\newcommand{\aryb}{\begin{eqnarray*}}
\newcommand{\arye}{\end{eqnarray*}}
\def\alb#1\ale{\begin{align*}#1\end{align*}}
\def\allb#1\alle{\begin{align}#1\end{align}}
\newcommand{\eqb}{\begin{equation}}
\newcommand{\eqe}{\end{equation}}
\newcommand{\eqbn}{\begin{equation*}}
\newcommand{\eqen}{\end{equation*}}
\newcommand{\BB}{\mathbbm}
\newcommand{\ol}{\overline}
\newcommand{\ul}{\underline}
\newcommand{\op}{\operatorname}
\newcommand{\eqD}{\overset{\mathcal{L}}{=}}
\newcommand{\ep}{\epsilon}
\newcommand{\rta}{\rightarrow}
\newcommand{\wt}{\widetilde}
\newcommand{\wh}{\widehat} 
\newcommand{\mcl}{\mathcal}
\newcommand{\bdy}{\partial}
\let\originalleft\left
\let\originalright\right
\renewcommand{\left}{\mathopen{}\mathclose\bgroup\originalleft}
\renewcommand{\right}{\aftergroup\egroup\originalright}
\title{External diffusion-limited aggregation on a spanning-tree-weighted random planar map}
\date{  }
\author{
\begin{tabular}{c} Ewain Gwynne\\[-5pt]\small Cambridge \end{tabular}
\begin{tabular}{c} Joshua Pfeffer\\[-5pt]\small MIT \end{tabular} 
}
\begin{document}

\maketitle

\begin{abstract}  
Let $M$ be the infinite spanning-tree-weighted random planar map, which is the local limit of finite random planar maps sampled with probability proportional to the number of spanning trees they admit. 
We show that a.s.\ the $M$-graph-distance diameter of the external diffusion-limited aggregation (DLA) cluster on $M$ run for $m$ steps is of order $m^{2/d + o_m(1)}$, where $d$ is the metric ball volume growth exponent for $M$ (which was shown to exist by Ding-Gwynne, 2018). 
By known bounds for $d$, one has $0.55051\dots \leq 2/d \leq 0.563315\dots$. 

Along the way, we also prove that loop-erased random walk (LERW) on $M$ typically travels graph distance $m^{2/d  + o_m(1)}$ in $m$ units of time and that the graph-distance diameter of a finite spanning-tree-weighted random planar map with $n$ edges, with or without boundary, is of order $n^{1/d+o_n(1)}$ except on an event with probability decaying faster than any negative power of $n$.

Our proofs are based on a special relationship between DLA and LERW on spanning-tree-weighted random planar maps as well as estimates for distances in such maps which come from the theory of Liouville quantum gravity. 
\end{abstract}

\tableofcontents

\section{Introduction}
\label{sec-intro}
 
\label{Overview}
\label{sec-overview}

This paper studies a random growth process on a connected graph called \textbf{external diffusion-limited aggregation} (abbrv. DLA).  
DLA describes a process in which new edges are randomly added to a growing cluster according to harmonic measure viewed from some target vertex.

\begin{defn}[External DLA]
Let $G$ be a connected graph, and let $v_0,v_*$ be vertices in $G$.  We define \emph{external DLA} with \emph{initial vertex} $v_0$ and \emph{target vertex} $v_*$ as a finite growing sequence $X_m$ of random subgraphs of $G$, started with $X_0 := \{v_0\}$ and defined inductively as follows:
\begin{itemize}
\item
If $X_{m-1}$ does not contain $v_*$ for some positive integer $m$, then we consider the set of edges $G\setminus X_{m-1}$ with exactly one endpoint in $X_{m-1}$, and we sample one of these edges according to harmonic measure from $v_*$. In other words, we run a simple random walk on $G$ started from $v_*$ conditioned to hit $X_{m-1}$---this condition automatically holds if simple random walk on $G$ is recurrent---and we choose the last edge it traverses before it hits $X_{m-1}$.  We then define $X_m$ as the union of the subgraph $X_{m-1}$, the edge we just sampled, and the endpoint of that edge that is not already in $X_{m-1}$. 
\item
If $X_{m-1}$ does contain $v_*$, then the process terminates.
\end{itemize}
We can also define \emph{external DLA targeted at infinity}, with harmonic measure from $v_*$ replaced by harmonic measure from infinity, on infinite graphs for which this measure is well-defined.
\label{defn-dla}
\end{defn}

DLA was originally introduced by Witten and Sander in 1981 to describe random growths of ``dust balls, agglomerated soot, and dendrites'' in nature~\cite{witten-dla1,witten-dla2}, and the process has been studied widely by physicists using simulations. See the review articles~\cite{sander-dla-survey,halsey-dla-survey}  for a survey of this vast literature from a physics perspective.

By contrast, mathematical results about DLA are rather limited. Kesten~\cite{kesten-arms, kesten-dla2} showed that the diameter of the DLA cluster on ${\mathbb{Z}}^n$ (w.r.t.\ the ambient graph metric on ${\mathbb{Z}}^n$) after $m$ steps grows asymptotically no faster than $m^{2/3}$  in dimension $n=2$, no faster than $(m \log{m})^{1/2}$ in dimension $n=3$, and no faster than $m^{2/(n+1)}$ in dimensions $n>3$.  Kesten's techniques have been extended to a more general class of graphs; see, e.g.,~\cite{benjamini-yadin-dla}, and similar bounds have been obtained for DLA on the half-plane~\cite{pz-half-plane-dla} defined using the so-called stationary harmonic measure. But Kesten's bounds are far from optimal, and neither sharper upper bounds nor any non-trivial lower bounds at all have been proven rigorously since Kesten's work. It is not even known whether there exists an exponent that describes the growth of the diameter of the external DLA cluster in $\BB Z^n$ for $n \geq 2$.

There is also a substantial literature concerning generalizations and variants of DLA, such as the \emph{dielectric breakdown model}~\cite{niem-dialectic} the closely related \emph{Hastings-Levitov} model~\cite{hastings-levitov}, but so far these models remain poorly understood for the parameter values which are expected to correspond to DLA. 
We will not attempt to survey this literature in its entirety, but see~\cite{cm-aggregation,rz-laplacian-growth,nt-hastings-levitov,nst-planar-aggregation,qle,stv-laplacian-growth} for some representative results on these models. 

In this paper, we consider DLA in a \textit{random} environment called the \emph{uniform infinite spanning-tree-weighted random planar map} (abbrv.\ UITM), which is defined as follows.

\begin{defn}
\label{defn-UITM}
The \emph{uniform infinite spanning-tree-weighted random planar map} (abbrv.\ UITM) $(M , e_0  )$ is the Benjamini-Schramm local limit~\cite{benjamini-schramm-topology} of finite random planar maps sampled with probability proportional to the number of spanning trees they admit, rooted at a uniformly random oriented edge. See~\cite{shef-burger,chen-fk} for a proof that this local limit exists. We write $v_0$ for the terminal endpoint of $e_0$ and call $v_0$ the \emph{root vertex}.
\end{defn}

\begin{figure}[ht!] \centering
\includegraphics[width=0.6\textwidth]{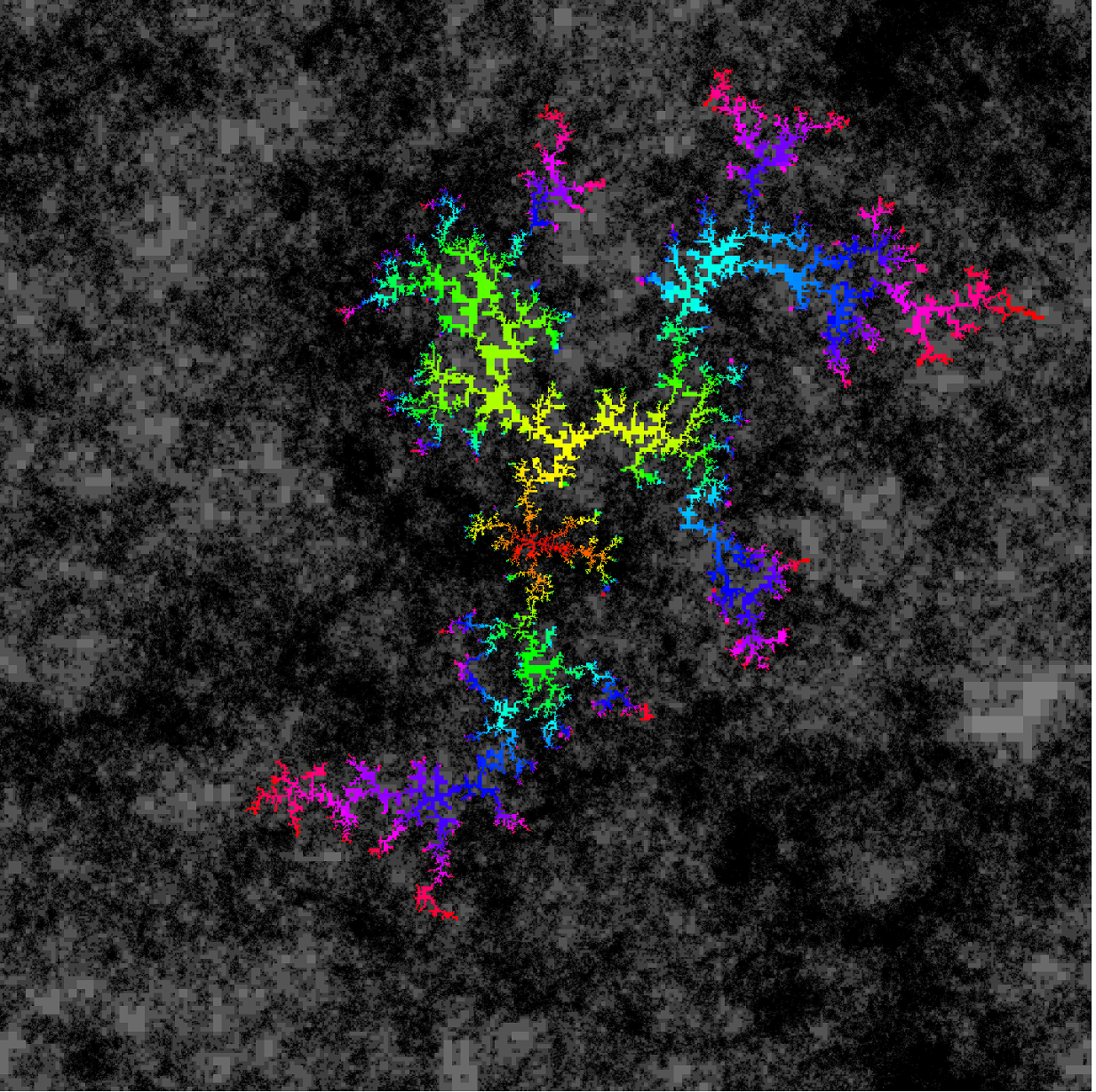}
\caption{Simulation of external DLA on a graph approximation of $\sqrt 2$-LQG made by Jason Miller. Given the connection between the UITM and  $\sqrt 2$-LQG that we describe in Section~\ref{sec-intro-tool2}, we expect, though we do not prove, that this random growth process is in the same universality class as DLA on the UITM, provided that we embed the UITM into the plane via a so-called ``discrete conformal embedding'' such as the circle packing embedding or the Tutte embedding.
}
\label{fig-dla}
\end{figure}

For reasons that we will describe further in Sections~\ref{sec-intro-tool1} and~\ref{sec-intro-tool2}, we find that DLA is much more tractable on the UITM than on, say, the Euclidean lattice ${\BB{Z}}^2$. 
Our main result identifies the growth exponent of the graph-distance diameter of external DLA on $M$ in terms of another growth exponent associated to $M$, which we now define.

\begin{defn}[Ball volume exponent]
\label{def-d}
We define the volume exponent of a metric ball in the UITM centered at the root as the limit
\begin{equation} \label{d} 
d := \lim_{r \rightarrow \infty} \frac{\log \#B_r^M(v_0)}{\log r} ,
\end{equation}
where $B_r^M(v_0)$ is the graph-distance ball of radius $r$ in $M$ centered at $v_0$ and $\#B_r^M(v_0)$ is its cardinality. 
\end{defn}

The existence of the limit in~\eqref{d} is established in~\cite[Theorem 1.6]{dg-lqg-dim}, building on~\cite{ghs-map-dist,dzz-heat-kernel}.  (Note that $d$ is referred to as $d_{\sqrt 2}$ in~\cite{dg-lqg-dim}.)   The exponent $d$ also describes the graph distance traveled by simple random walk on $M$: it was shown in~\cite{gh-displacement} that this distance after $n$ steps is typically of order $n^{1/d + o_n(1)}$. The exponent $d$ also arises in the theory of Liouville quantum gravity (LQG), see Remark~\ref{remark-d}.  We will say more about LQG and its relationship to the UITM in Section~\ref{sec-intro-tool2}.

We now state our main result. For the statement, we note that harmonic measure from $\infty$ on the UITM is well-defined (see Proposition~\ref{prop-graph-conditions2} below), which allows us to define external DLA targeted at $\infty$ on the UITM.

\begin{thm}[DLA growth exponent]
Let $(M,e_0)$ be the UITM and let $\{X_m\}_{m\in\BB N}$ be the clusters of external DLA on $M$ started from the root vertex $v_0$ and targeted at $\infty$. With $d$ as in Definition~\ref{def-d}, we have
\eqb
\lim_{m \rightarrow \infty} \frac{\log \op{diam} (X_m ; M) }{\log m} = \frac{2}{d} \qquad\text{ a.s.,}
\eqe
where $\op{diam}(\cdot ; M)$ denotes diameter with respect to the graph distance on $M$. 
\label{main3}
\end{thm}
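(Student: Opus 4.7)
The plan is to reduce Theorem~\ref{main3} to two cleaner statements: (a) a distributional comparison between the DLA cluster $X_m$ on the UITWM and a controlled union of LERW branches in $M$, and (b) a LERW displacement estimate showing that a LERW of $m$ steps on $M$ starting from $v_0$ typically has graph distance diameter $m^{2/d+o(1)}$. Statement (a) would leverage the special structure of spanning-tree-weighted maps via Wilson's algorithm and Sheffield's mating-of-trees bijection, while (b) would follow from the ball volume exponent of~\cite{dg-lqg-dim}.

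For the DLA--LERW comparison in (a), the key point is that on \emph{any} graph, running external DLA targeted at a vertex or at $\infty$ to completion produces a random spanning tree, while on the UITWM the spanning-tree weighting is precisely what makes the joint law of $(M, \mathrm{UST})$ tractable via mating-of-trees. I would aim to show that $(M, X_m)$ can be coupled, up to controlled errors, with a pair $(M, \mathcal{W}_m)$ where $\mathcal{W}_m$ is a forest of $O(1)$ LERW branches from $v_0$ to $\infty$ having total length $O(m)$. For the LERW estimate in (b), the mating-of-trees encoding realizes the LERW from $v_0$ as a specific portion of the encoding walk; one can track the polynomial volume growth of the map region touched or enclosed by this portion and convert to graph distance via the exponent $d$ from~\cite{dg-lqg-dim}. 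Granted (a) and (b), the upper bound $\op{diam}(X_m) \leq m^{2/d + o(1)}$ is immediate, since $X_m$ is contained in a union of short LERW branches. For the lower bound $\op{diam}(X_m) \geq m^{2/d - o(1)}$, one isolates a single LERW branch of length $\asymp m$ and applies (b) to that branch.

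The main obstacle will be step (a): making the DLA--LERW comparison quantitative enough to transfer displacement estimates in both directions. Unlike LERW, DLA branches, so the comparison cannot be a literal identity in law; it must instead involve a tree of LERWs whose total length and number of branches are controlled. This requires a careful accounting of how the spanning-tree weighting modifies the naive DLA dynamics to remove the obstructions one would face on a deterministic graph. The lower-bound direction has the additional subtlety that one must rule out $X_m$ being ``short and fat''---spread over many short branches rather than extending in one direction---which may require a separate argument about the aspect ratio of DLA clusters on the UITWM. By contrast, step (b) is largely a translation between LERW length, mating-of-trees quantities, and graph distance via~\cite{dg-lqg-dim}, and should be comparatively routine.
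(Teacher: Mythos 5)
There is a genuine gap, and it lies exactly where you flag it: step (a). The coupling you propose --- $(M,X_m)$ matched, up to controlled errors, with a forest of $O(1)$ LERW branches of total length $O(m)$ --- is not available and is much stronger than the combinatorial fact that actually holds on spanning-tree-weighted maps. What is true (and what the paper proves, via a bijection on cut-open maps and Wilson's algorithm) is only this: the planar map obtained by cutting $M$ along the first $m$ edges of the DLA cluster has the \emph{same marginal law} as the map obtained by cutting along the first $m$ steps of a single LERW from $v_0$ to $\infty$. This is an identity of laws of the \emph{complementary} maps $\wt M^{(m)}$ and $M^{(m)}$; it is emphatically not a coupling of the clusters themselves, and it gives no control on the branching structure of $X_m$, so nothing like ``$X_m$ is contained in a union of $O(1)$ short LERW branches'' can be extracted from it. Your lower-bound step inherits the same problem: even if (a) held, $X_m$ need not contain a single branch of length $\asymp m$, and the ``short and fat'' scenario you mention is precisely the difficulty, left unresolved in your outline. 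The paper circumvents both issues without ever localizing branches: the upper bound follows because $\op{diam}(X_m;M) \le \op{diam}(\bdy M^{(m)};M^{(m)})$, and the right-hand side depends only on the law of the cut map, hence can be estimated in the LERW picture; the lower bound follows from a \emph{volume} argument --- the cut-map identity shows that the radius-$m^{\alpha/d+\delta}$ neighborhood of $X_m$ in $M$ contains at least $m^{1+\alpha/2-\delta}$ vertices, and the ball-volume exponent $d$ then forces $X_m$ to reach graph distance $m^{2/d-o(1)}$ from $v_0$ (send $\alpha\to 2$). No aspect-ratio or single-branch statement is needed.

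Your step (b) is also not ``comparatively routine.'' The displacement/diameter exponent for LERW (and for the boundary of the cut map) does not follow by a direct translation from the ball-volume results of~\cite{dg-lqg-dim}: one needs to know that the \emph{internal} diameter of the Mullin-bijection submaps $M|_{[0,n]}$ is $n^{1/d+o(1)}$ with very high probability, i.e.\ that the exponent $\chi$ of~\cite{ghs-dist-exponent,ghs-map-dist} equals $1/d$ and that the lower bound holds with superpolynomially high probability rather than just probability $n^{-o_n(1)}$. Establishing this (Theorem~\ref{thm-chi}) is the technical heart of the paper and requires new SLE/LQG estimates for the one-sided mated-CRT map (distances from an interior point to the boundary of a cell-adjacency graph, Radon--Nikodym comparisons between whole-plane and chordal settings, quantum-wedge scaling), not merely bookkeeping with the mating-of-trees encoding. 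So as written, both halves of your reduction rest on inputs that are either unavailable (the cluster-level coupling) or substantially harder than stated (the LERW distance exponent).
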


It is shown in~\cite[Corollary 2.5]{gp-lfpp-bounds}, building on~\cite[Theorem 1.2]{dg-lqg-dim}, that $2 ( 9   + 3 \sqrt 5 - \sqrt{3}) ( 4 - \sqrt{15}) \leq d \leq \frac13(6 + 2\sqrt 6)$ (one has $\gamma = \sqrt 2$ in our setting), which implies that the exponent $2/d$ in Theorem~\ref{main3} satisfies the upper and lower bounds
\eqb \label{eqn-dla-bounds}
0.55051 \approx \frac{3}{3+ \sqrt 6} \leq \frac{2}{d} \leq  \frac{4+\sqrt{15}}{9   + 3 \sqrt 5 - \sqrt{3}} \approx 0.563315 .
\eqe
We do not expect that either the upper or lower bound in~\eqref{eqn-dla-bounds} is optimal. See~\cite[Section 1.3]{dg-lqg-dim} for some speculation concerning the numerical value of $d$.

It is natural to wonder whether Theorem~\ref{main3} tells us anything about external DLA on $\BB Z^2$ via some version of the KPZ formula~\cite{kpz-scaling,shef-kpz}. As far as we know, it does not, even at a heuristic level, since the scaling limits of external DLA on $M$ (see Footnote~\ref{footnote-dla-scaling-limit}) and on $\BB Z^2$ are not expected to agree in law. At a heuristic level, this can be seen since the simulation of DLA on $\sqrt 2$-LQG in Figure~\ref{fig-dla} looks qualitatively different from simulations of DLA on $\BB Z^2$ (e.g., in the sense that the lengths of the ``arms" are much less uniform). See also~\cite{dla-lattice-dependence} for some numerical evidence that the scaling limit of DLA should be lattice-dependent.  

We devote the rest of this introductory section to describing \emph{why} we are able to derive a precise growth exponent for external DLA specifically for the UITM.  The central idea is that, if we decorate the UITM by a uniform spanning tree on the map, then the joint law of the map and the tree is uniform on the set of such pairs. This obvious property of the UITM is actually quite powerful, and as we explain in the following two subsections, it gives us two tools for studying DLA on the UITM:
\begin{enumerate}
\item A link on the UITM between external DLA and \emph{loop-erased random walk}.
\item A link between UITM and \emph{Liouville quantum gravity} via the so-called \emph{mated-CRT map}.
\end{enumerate}
Our proof of Theorem~\ref{main3} rests on applying these two tools; we now elaborate on each of them in turn.

\subsection{Tool 1: DLA and loop-erased random walk on the UITM}
\label{sec-intro-tool1}

The first tool that we use is a relationship between DLA and the following related growth process on a graph.

\begin{defn}
We define \emph{loop-erased random walk} (LERW) on a graph $G$ from a vertex $v_0$ to a vertex $v_*$ as a random path in $G$ started at $v_0$, in which we add each sucessive edge of the walk by sampling an edge adjacent to the tip of the path according to harmonic measure from $v_*$ at the tip (with harmonic measure defined as in Definition~\ref{defn-dla}). We define LERW from $v_0$ to infinity the same way, with harmonic measure from $v_*$ replaced by harmonic measure from infinity, on graphs for which the latter measure is well-defined.
\end{defn}

We call this path a loop-erased random walk because we can generate it by sampling a simple random walk from $v_0$ run until it hits $v_*$, and then erasing all the loops of this random walk in chronological order.  {Note also that one can view LERW as the Laplacian-$b$ random walk defined in~\cite{lawler-laplacian-walk} with $b=1$.}

The key combinatorial fact that links external DLA to LERW on the UITM is \emph{Wilson's algorithm}~\cite{wilson-algorithm}, a famous algorithm in combinatorics that uses loop-erased random walk on a finite graph $G$ to generate a uniform spanning tree on $G$. If $G$ is infinite, then it is not \emph{a priori} clear how to define a ``uniform spanning tree'' on $G$.  The next proposition asserts that, for recurrent graphs, the uniform spanning tree can be defined as the limit of uniform spanning trees on finite graphs, and that the resulting tree is equivalent to the tree obtained on the infinite graph via Wilson's algorithm.

\begin{prop}
\label{prop-graph-conditions}
Suppose that $G$ is an infinite graph for which simple random walk is recurrent. Then, if $G_n$ is an increasing sequence of subgraphs of $G$ whose union is all of $G$, the uniform spanning trees on $G_n$---viewed as measures on the product space $2^{E}$, for $E$ the set of edges of $G$---converge weakly as $n \rta \infty$ to a tree on $G$ that we call the \emph{uniform spanning tree} on $G$.  Moreover, this tree is the same as the tree obtained by applying Wilson's algorithm on $G$.  This means that, for any vertices $v_0$ and $v_*$ in $G$, sampling an edge adjacent to $v_0$ according to harmonic measure viewed from $v_*$ is equivalent to sampling a uniform spanning tree $T$ on $G$, and choosing the first edge on the path in $T$ from $v_0$ to $v_*$.

Finally, we can apply this result to the UITM almost surely since simple random walk on the UITM is a.s.\ recurrent.
\end{prop}
\begin{proof}
The result for infinite recurrent graphs follows from~\cite[Proposition 5.6]{blps-usf}.   The fact that simple random walk on the UITM is a.s.\ recurrent follows from~\cite[Theorem 1.1]{gn-recurrence} (see also~\cite{chen-fk}).
\end{proof}

Thus, for infinite recurrent graphs $G$ (including the UITM), the law of LERW from $v_0$ to $v_*$ is the law of the unique branch from $v_0$ to $v_*$  of a uniform spanning tree on $G$.  In particular, since simple random walk on the UITM is a.s.\ recurrent (this follows from~\cite[Theorem 1.1]{gn-recurrence}---see also~\cite{chen-fk}), we can a.s.\ sample a uniform spanning tree on the UITM.  

Moreover, if the uniform spanning tree on the infinite graph is \emph{almost surely one-ended}, then we get a similar characterization of harmonic measure from infinity (and LERW to infinity) in terms of the tree:

\begin{prop}
\label{prop-graph-conditions2}
Suppose that $G$ is an infinite graph for which simple random walk is recurrent and the uniform spanning tree is a.s.\ one-ended.  Then, for any subgraph $A$ of $G$, harmonic measure on $A$ from infinity is well-defined as the limit of harmonic measure on $A$ from a vertex $v$ as the graph distance between $v$ and $A$ tends to infinity.  Moreover,  for any vertex $v_0$  in $G$, sampling an edge adjacent to $v_0$ according to harmonic measure viewed from infinity is equivalent to sampling a uniform spanning tree $T$ on $G$, and choosing the first edge on the (unique) path in $T$ from $v_0$ to infinity.
\end{prop}
\begin{proof}
This result is~\cite[Theorem 14.2]{blps-usf}.
\end{proof}

Thus, on this large class of graphs $G$---which we show in Section~\ref{sec-lerw-dla} includes the UITM---we can describe the law of LERW from $v_0$ to infinity as the law of the unique branch from $v_0$ to  infinity of a uniform spanning tree on $G$. 

As we will prove in Section~\ref{sec-lerw-dla}, this result yields an exact relation between the laws of DLA and LERW on the UITM---a relation that does \emph{not} appear to be satisfied for any other types of random planar maps (e.g., uniform maps or planar maps with other weighting).  Briefly, suppose we run $m$ steps of a LERW from the root to infinity on a rooted infinite planar map $M$. We can ``cut" along the edges of this length-$m$ path---replacing each such edge by a pair of edges---to obtain an infinite planar map $M^{(m)}$ with finite boundary of length $2m$. We can perform the same cutting operation on a DLA cluster run for $m$ steps to obtain another infinite random planar map $\wt M^{(m)}$ with finite boundary of length $2m$.  The key fact that we observe is that, if we take $M$ to have the law of a UITM, then \emph{$M^{(m)}$ and $\wt M^{(m)}$ have the same law}. See Figure~\ref{fig-cutting} for an illustration of the cutting procedure. We note that similar cutting procedures for various processes on random planar maps have been used elsewhere in the literature, e.g., in the case of a self-avoiding walk~\cite{dup-kos-saw,caraceni-curien-saw,gwynne-miller-saw}, a collection of loops~\cite{bbg-recursive-approach}, and a (non-spanning) tree~\cite{fs-tree-decorated}.

\begin{figure}[ht!] \centering
\begin{tabular}{ccc} 
\includegraphics[width=0.33\textwidth]{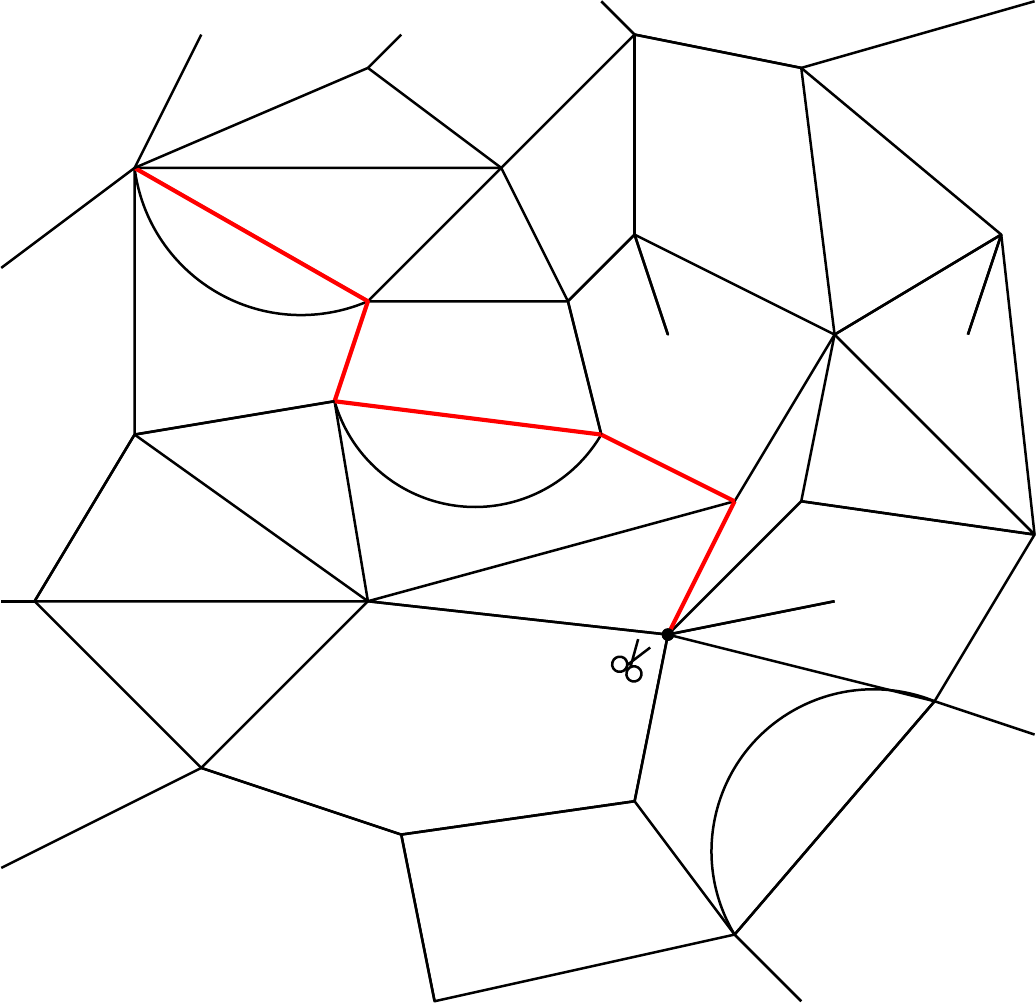}
&
\includegraphics[width=0.33\textwidth]{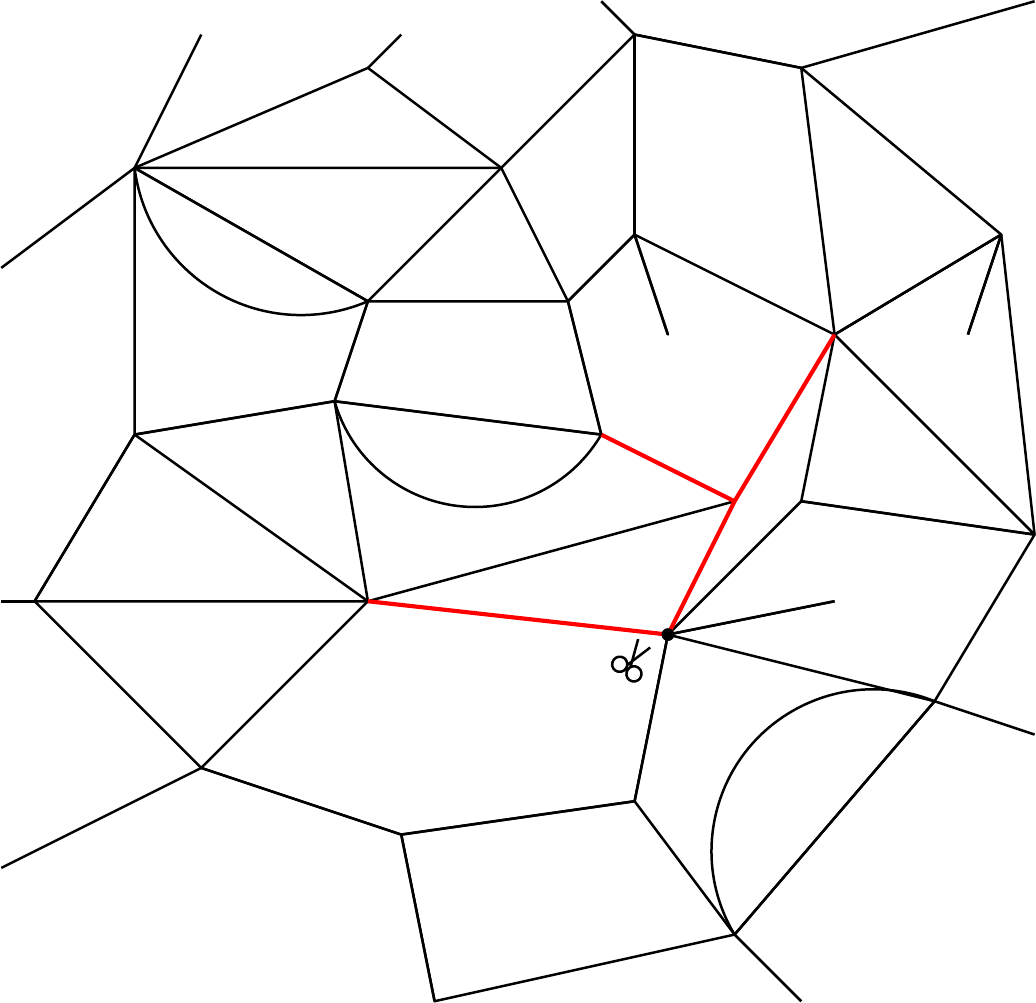}
\\
\includegraphics[width=0.33\textwidth]{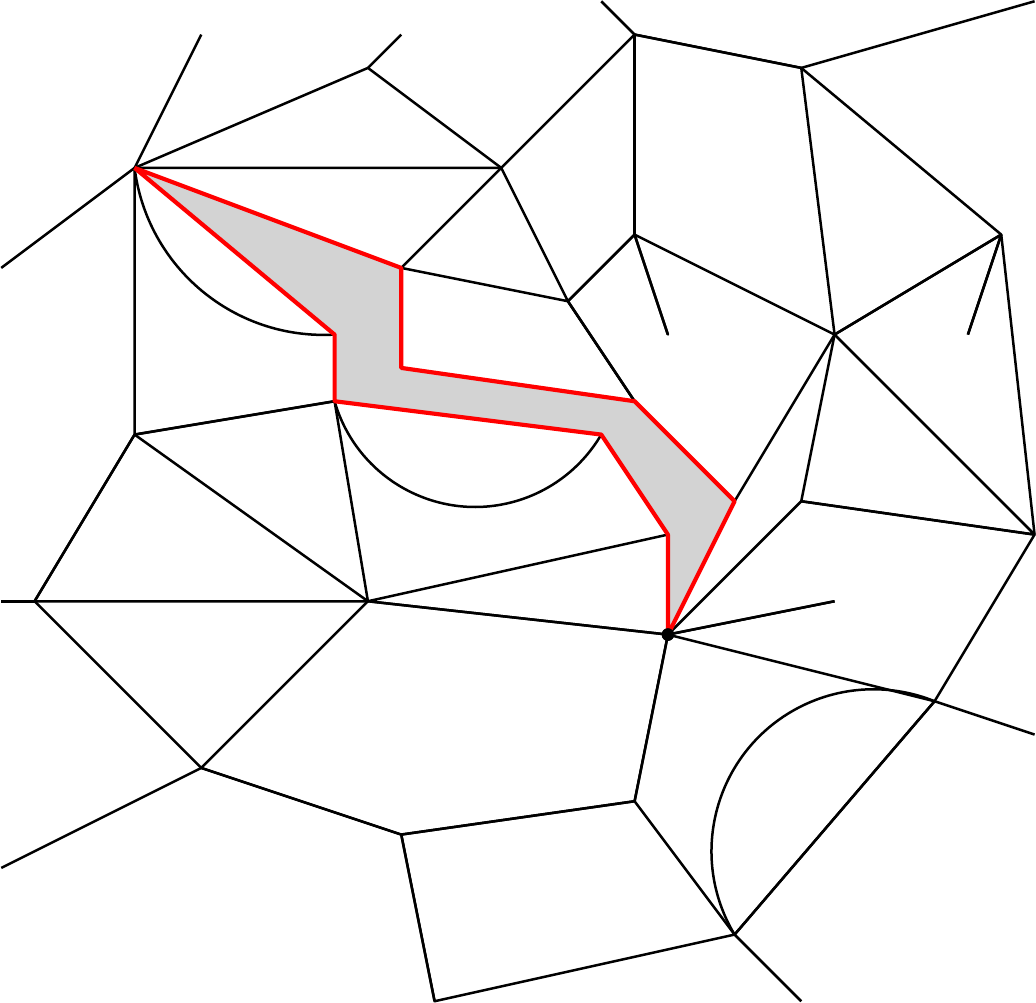}
&
\includegraphics[width=0.33\textwidth]{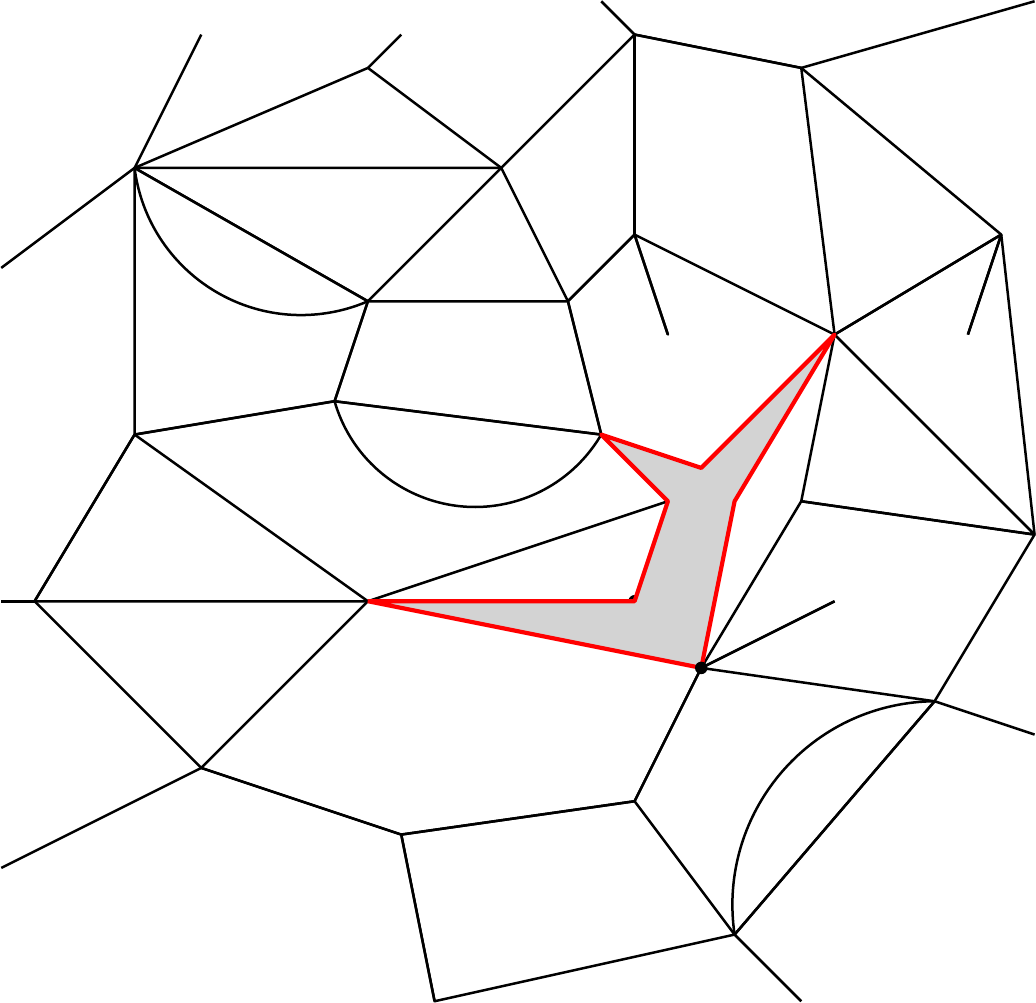}
\end{tabular}
\caption{\textbf{Top Left:} We color in red the first four edges of a LERW from the black vertex to infinity on an infinite random planar map $M$.    \textbf{Bottom Left:} The infinite random planar map $M^{(4)}$ with finite boundary of length $8$ obtained by cutting along the edges of the first four edges of the LERW. \textbf{Top/Bottom Right:} We do the same for the first four edges of edge-based external DLA started from the black vertex and targeted at infinity to obtain the map $\wt M^{(4)}$.
}
\label{fig-cutting}
\end{figure}

The above property is closely related to the representation of DLA as ``re-shuffled loop-erased random walk" in~\cite[Section 2.3]{qle}.\footnote{\label{footnote-dla-scaling-limit} Miller and Sheffield~\cite{qle} use a continuum version of this ``re-shuffled loop-erased random walk" to construct a candidate for the scaling limit of external DLA on a spanning-tree-weighted random planar map, namely the \emph{quantum Loewner evolution} with $\gamma^2 = 2$ and $\eta = 1$ (denoted QLE$(2,1)$). More specifically, they construct the QLE$(2,1)$ process  by randomly ``re-shuffling" SLE$_2$ curves on a $\sqrt 2$-Liouville quantum gravity surface and taking a (subsequential) limit as the time increments between re-shuffling operations goes to zero. Currently, there are no known rigorous relationships between QLE$(2,1)$ and the discrete objects studied in the present paper.}

Using the connection we just described between DLA and LERW, we will prove Theorem~\ref{main3} by proving the same growth exponent for loop-erased random walk on $M$.

\begin{thm}[Loop erased random walk growth exponent] \label{thm-lerw}
Consider a loop-erased random walk on the UITM $M$ from $v_0$ to $\infty$; equivalently, consider the branch from $v_0$ to $\infty$ of the uniform spanning tree on $M$.  For $m\in\BB N$, let $\op{LERW}_m$ be the set of edges which it traverses in its first $m$ steps. 
Almost surely,
\eqb
\lim_{m\rta\infty} \frac{\log \op{diam} (\op{LERW}_m  ; M) }{\log m} = \frac{2}{d} .
\eqe
In particular, by Theorem~\ref{main3} and the discussion preceding it, the growth exponent for loop erased random walk on $M$ is the same as the growth exponent for external DLA and is twice the growth exponent for simple random walk. 
\end{thm}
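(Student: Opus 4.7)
The approach combines the distributional identification between the LERW-cut and DLA-cut maps (Lemma~\ref{complementinfinite}) with two ingredients for spanning-tree-weighted maps that are treated elsewhere in the paper: the finite-map diameter bound $n^{1/d+o_n(1)}$ stated in the abstract, and the random walk displacement estimates of~\cite{gh-displacement}. The central heuristic is that an LERW of length $m$ is the ``boundary'' of a sub-map of $M$ whose area is $m^{2+o_m(1)}$ edges, reflecting the $\gamma=\sqrt{2}$ LQG area-boundary scaling $(\text{area})\sim(\text{boundary length})^{2}$.

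For the upper bound $\op{diam}(\op{LERW}_m;M)\le m^{2/d+o_m(1)}$: cut $M$ along $\op{LERW}_m$ to obtain the infinite map $M^{(m)}$ with boundary of length $2m$. Since cutting only increases graph distances, $\op{diam}(\op{LERW}_m;M)\le \op{diam}(\partial M^{(m)};M^{(m)})$. Using the mating-of-trees encoding of the UITWM by a pair of correlated random walks — in particular, the fact that $M^{(m)}$ is (in a quantitative sense) approximately a spanning-tree-weighted disk with boundary length $2m$ — I would show that with high probability the portion of $M^{(m)}$ relevant for boundary-to-boundary distances contains $n=m^{2+o_m(1)}$ edges. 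The finite-map diameter estimate then yields $\op{diam}(\partial M^{(m)};M^{(m)})\le n^{1/d+o_n(1)}=m^{2/d+o_m(1)}$.

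For the lower bound $\op{diam}(\op{LERW}_m;M)\ge m^{2/d-o_m(1)}$: realize the LERW as the loop-erasure of a simple random walk $S$ on $M$ started from $v_0$ via Wilson's algorithm. Let $\tau_m$ be the first time at which the loop-erasure has $m$ edges, so that $S_{\tau_m}$ is the $m$-th vertex of $\op{LERW}_m$. Using the same area-boundary scaling in the opposite direction, show $\tau_m\ge m^{2-o_m(1)}$ with high probability (the cut map up to $m$ LERW edges has area at least $m^{2-o_m(1)}$, which forces the SRW generating the LERW to spend at least that many steps). The matching lower bound on SRW displacement from~\cite{gh-displacement}, which states that the $M$-distance from $v_0$ to $S_T$ is at least $T^{1/d-o_T(1)}$, then gives $d_M(v_0,S_{\tau_m})\ge m^{2/d-o_m(1)}$, bounding the diameter from below.

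The principal obstacle is establishing the two-sided scaling $\tau_m=m^{2+o_m(1)}$, or equivalently the quantitative area-boundary correspondence for the cut map $M^{(m)}$. This is where the detailed mating-of-trees / LQG analysis enters, and where one must transfer continuum LQG disk estimates to the discrete map with effective control on the $o_m(1)$ error. Once this scaling is in hand, the identical scheme — combined with the distributional equality $M^{(m)}\overset{\mathcal{L}}{=}\widetilde{M}^{(m)}$ from Lemma~\ref{complementinfinite} — simultaneously produces Theorem~\ref{main3} for DLA, matching the remark in the introduction that the LERW result is obtained ``along the way''.
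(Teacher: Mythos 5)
Your upper bound is in the right spirit: the reduction $\op{diam}(\op{LERW}_m;M)\le \op{diam}(\bdy M^{(m)};M^{(m)})$ and the $m\leftrightarrow m^2$ boundary--area scaling are exactly what the paper uses. But the step you defer ("$M^{(m)}$ is approximately a spanning-tree-weighted disk of area $m^{2+o_m(1)}$, then apply the finite-map diameter bound") is where all the work lies, and as stated it does not go through: $M^{(m)}$ is an infinite map with boundary, not a finite disk, and no coupling or absolute-continuity statement with a finite disk is supplied (note also that Theorem~\ref{thm-finite-map} requires $\ell\le n^{1/2}$, which with $\ell=2m$, $n\approx m^2$ is already borderline, and that for $\gamma=\sqrt2$ the two encoding walks are independent, not correlated). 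The paper avoids any such comparison: in the Mullin encoding the $m$-th LERW edge corresponds to the hitting time $t_m\approx m^2$ of $-m$ by $\mcl R$, so one covers each half of $\bdy M^{(m)}$ by $O(m^{o(1)})$ submaps $M|_{[\tau_k,\tau_k+m^2]}$ and applies Theorem~\ref{thm-chi} (i.e.\ $\chi=1/d$) to each, concluding by the triangle inequality (Proposition~\ref{prop-lerw-complement}).

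The lower bound is a genuinely different route and it has gaps that I do not see how to repair. First, on the recurrent UITWM the LERW from $v_0$ to $\infty$ is \emph{not} the loop-erasure of a single simple random walk (the walk returns to $v_0$ infinitely often, so the loop-erasure of $S|_{[0,t]}$ does not stabilize); it is defined via harmonic measure from infinity, or as the wired UST branch, so your time $\tau_m$ is not well founded without passing to LERW targeted at a far vertex and controlling the limit. Second, the claim $\tau_m\ge m^{2-o_m(1)}$ conflates two different clocks: the $m^2$ scaling is for the Mullin space-filling exploration time, not for the number of steps of the random walk generating the LERW in Wilson's algorithm; the latter bound does not follow from any area statement about $M^{(m)}$ and is essentially as hard as (indeed circular with) the statement being proved. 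Third, even granting both points, you apply the fixed-time displacement lower bound of~\cite{gh-displacement} at the random, trajectory-dependent time $\tau_m$; since the walk is recurrent there is no uniform-in-time lower bound $\inf_{T\ge m^{2-\ep}}\op{dist}(v_0,S_T)\ge m^{2/d-o_m(1)}$, and the event $\{\op{diam}(\op{LERW}_m;M)$ small$\}$ is exactly correlated with the walk being atypically close to $v_0$ at time $\tau_m$, so the high-probability statement cannot be invoked there. The paper's lower bound uses no random walk at all: it shows (Proposition~\ref{prop-lerw-complement}, again by covering with submaps of Mullin-time length $m^\alpha$ and using Theorem~\ref{thm-chi}) that the radius-$m^{\alpha/d+\delta}$ neighborhood of $\op{LERW}_m$ contains at least $m^{1+\alpha/2-\delta}$ vertices, and then compares with the ball-volume bound $\#B^M_r(v_0)\le r^{d+o_r(1)}$ from~\eqref{d}, letting $\alpha\rta 2$. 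I would suggest replacing your displacement argument with a volume argument of this type.
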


By applying the ``cutting operation'' just described, we are able to reduce the task of proving the growth exponent for DLA and LERW on the UITM to proving certain estimates for distances in spanning-tree-weighted maps with boundary.  We state one such estimate that we obtain in the course of our proof as a theorem.

\begin{thm}[Diameter of finite tree-weighted planar maps] \label{thm-finite-map}
Let $M_n$ be a finite spanning-tree-weighted random planar map with $n$ total edges, either without boundary or with a simple boundary cycle of specified length $\ell \leq n^{1/2}$; in the latter case, the spanning tree has wired boundary conditions. For each $\zeta \in (0,1)$, it holds except on an event of probability decaying faster than any negative power of $n$ (at a rate which does not depend on the particular choice of $\ell$) that the graph-distance diameter of $M_n$ is between $n^{1/d -\zeta}$ and $n^{1/d+\zeta}$. 
\end{thm}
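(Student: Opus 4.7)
The plan is to reduce both diameter bounds to a quantitative ball volume estimate in the UITWM and transfer it to the finite map $M_n$. The key input is a super-polynomial concentration version of~\eqref{d}: for every $\zeta, A > 0$, and all sufficiently large $r$,
\eqb \label{eq-prop-vol-uitwm}
\BB P\bigl( r^{d-\zeta} \leq \#B_r^M(v_0) \leq r^{d+\zeta} \bigr) \geq 1 - r^{-A}.
\eqe
I expect this to be the main technical obstacle: only the a.s.\ convergence of $(\log \#B_r^M(v_0))/\log r$ to $d$ is stated in~\cite[Theorem~1.6]{dg-lqg-dim}, and super-polynomial tails need to be extracted from the underlying Liouville first passage percolation and Liouville graph distance estimates of~\cite{ghs-map-dist, dzz-heat-kernel, gp-lfpp-bounds, dg-lqg-dim}, which have Gaussian-type concentration via the underlying log-correlated field.

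The second ingredient is a transfer principle at sub-diameter scales: for $r \leq n^{1/d-\zeta}$ and a uniformly chosen vertex $v \in M_n$, the law of $B_r^{M_n}(v)$ is within super-polynomially small total variation of that of $B_r^M(v_0)$. This should follow from Sheffield's mating-of-trees bijection~\cite{shef-burger}, under which $M_n$ and the UITWM are encoded by finite- and infinite-length walks respectively, and a ball of radius $r$ sees only a polynomial-in-$r$ window of the encoding. Combining~\eqref{eq-prop-vol-uitwm} with the transfer principle and taking a union bound over the $n$ vertices of $M_n$ (absorbed by the super-polynomial decay), we obtain with probability at least $1 - n^{-A}$ that
\eqb \label{eq-prop-vol-Mn}
r^{d-\zeta} \leq \#B_r^{M_n}(v) \leq r^{d+\zeta} \qquad \forall v \in M_n, \ \forall r \in [1, n^{1/d - \zeta}].
\eqe

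The two diameter bounds then follow from~\eqref{eq-prop-vol-Mn} by elementary arguments. For the \emph{lower bound}, set $r_- := n^{1/d-\zeta}$; then $\#B_{r_-}^{M_n}(v) \leq r_-^{d+\zeta} < n$ for $\zeta$ small, so there exists a vertex outside $B_{r_-}(v)$ and $\op{diam}(M_n) \geq r_-$. For the \emph{upper bound}, take a geodesic $\gamma = (u_0, \ldots, u_D)$ in $M_n$ of length $D := \op{diam}(M_n)$, and place disjoint balls $B_{r_-/3}(u_{i r_-})$ for $i = 0, 1, \ldots, \lfloor D/r_- \rfloor$ along $\gamma$ (disjoint because points of $\gamma$ at distance $\geq r_-$ have disjoint balls of radius $r_-/3$). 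By~\eqref{eq-prop-vol-Mn} each such ball has volume $\geq (r_-/3)^{d-\zeta}$, and since they fit disjointly in $M_n$ we obtain $(D/r_-)(r_-/3)^{d-\zeta} \leq n$, i.e., $D \leq C_d \, n \, r_-^{-(d-\zeta-1)} = n^{1/d + O(\zeta)}$, which is $\leq n^{1/d + \zeta'}$ for any prescribed $\zeta' > 0$ upon choosing $\zeta$ small.

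The boundary case $\ell \leq n^{1/2}$ is handled by the analogous argument, with the transfer principle instead comparing $M_n$ to an appropriate boundary-rooted infinite tree-weighted map (the local limit of tree-weighted maps with short boundary, which should exist by the same mating-of-trees framework), and which itself should satisfy~\eqref{eq-prop-vol-uitwm}-style volume estimates by the same LQG inputs. The geodesic packing argument then gives the upper bound uniformly in $\ell \leq n^{1/2}$ without change.
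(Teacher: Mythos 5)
There is a genuine gap: both quantitative inputs your argument rests on are unproven, and the second is almost certainly false at the strength you need. First, the superpolynomial concentration of the UITWM ball volume (your displayed estimate on $\#B_r^M(v_0)$) is not in the literature and is not a routine extraction from the LFPP/Liouville-graph-distance results: it is essentially of the same difficulty as the main technical work of this paper (Theorem~\ref{thm-chi}, whose proof occupies most of the article), and for the upper tail it is doubtful even in principle, since the natural mechanism for $\#B_r^M(v_0)$ to overshoot $r^{d+\zeta}$ is an atypically thick field near the root, an event whose probability decays only polynomially in $r$. Second, the transfer principle you invoke cannot hold with superpolynomially small total variation error at radius $r=n^{1/d-\zeta}$: such a ball is encoded by a window of roughly $r^d = n^{1-d\zeta}$ steps of the walk, a polynomially large fraction of the length-$2n$ encoding, and the effect of conditioning the walk to stay in the quadrant and to bridge to $(\ell,0)$ on a window of that length is only polynomially small in total variation. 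Since the theorem demands failure probability smaller than every power of $n$, a per-vertex error that is merely polynomially small (let alone a union bound over $n$ vertices on top of it) cannot deliver the conclusion; your packing and counting arguments at the end are fine, but they are fed by estimates that are not available.

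The paper's route avoids both obstacles and is much shorter: by the finite-volume Mullin bijection (Remark~\ref{mullin-bdry}), $M_n$ has exactly the law of $M|_{[0,2n]}$ conditioned on the event that the encoding walk stays in the first quadrant and ends at $(\ell,0)$, an event whose probability is at least $C n^{-3/2}$ uniformly over $\ell \leq n^{1/2}$. The diameter of $M_n$ is then precisely the internal diameter of $M|_{[0,2n]}$, which Theorem~\ref{thm-chi} pins between $n^{1/d-\zeta}$ and $n^{1/d+\zeta}$ with superpolynomially high probability; since the conditioning event is only polynomially rare, the bad event remains superpolynomially rare after conditioning. If you want to salvage your approach, you would need to replace the total-variation comparison by this exact conditioning identity and replace the volume concentration by Theorem~\ref{thm-chi} (or something equivalent), at which point you have reproduced the paper's argument.
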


\subsection{Tool 2: Coupling UITM with the mated-CRT map and $\sqrt{2}$-LQG}
\label{sec-intro-tool2}

Even with the combinatorial bijection we described in Section~\ref{sec-intro-tool1} at our disposal, we still need to derive quantitative estimates on distances in spanning-tree-weighted maps in order to prove Theorems~\ref{main3},~\ref{thm-lerw}, and~\ref{thm-finite-map}.  Understanding distances in spanning-tree-weighted maps is highly non-trivial since there is no known way to estimate such distances directly. In the case of \emph{uniform} random planar maps, we have several tools to study distances, such as the Schaeffer bijection and its generalizations~\cite{schaeffer-bijection,bdg-bijection} and peeling~\cite{angel-peeling}.  However, we cannot apply these tools to spanning-tree-weighted maps.

Instead, we will apply a different combinatorial bijection that we have in the special setting of spanning-tree-weighted maps, known as the \emph{Mullin bijection}~\cite{mullin-maps,bernardi-maps,shef-burger}.  As we describe in Section~\ref{mapintro}, the Mullin bijection allows us to construct a spanning-tree-weighted random planar map by gluing together two independent discrete random trees.  This characterization of the spanning-tree-weighted random planar map is useful because it has a direct analogue in the continuum setting, in a theory of continuum random surfaces called \emph{Liouville quantum gravity} (LQG).  Namely, if we instead glued together two independent \emph{continuum random trees} (CRTs), then by the ``mating of trees'' theorem of Duplantier, Miller and Sheffield~\cite{wedges}, we get a certain type $\sqrt 2$-LQG surface called a \emph{$\sqrt 2$-quantum cone}.

Therefore, the Mullin bijection gives us a direct connection between spanning-tree-weighted random planar maps and $\sqrt{2}$-LQG.  The works~\cite{ghs-dist-exponent,ghs-map-dist} used this connection to develop a general technique for applying quantitative estimates from the LQG setting to prove results for spanning-tree-weighted maps. We note that their method can be applied to any random planar map model that can be constructed by gluing together a pair of discrete random trees.  However, we will describe the  technique only for the case of spanning-tree-weighted maps. 

The central principle of the technique in~\cite{ghs-dist-exponent,ghs-map-dist} is that we can naturally couple both the spanning-tree-weighted map and the corresponding $\sqrt{2}$-LQG  surface to a third model: a random planar map that we construct by gluing together two independent \emph{discretized} CRTs.  We call this model the \emph{mated-CRT map}, and we couple it to the spanning-tree-weighted map and to $\sqrt{2}$-LQG as follows:
\begin{itemize}
\item
We can use a strong coupling between random walk and Brownian motion to couple the spanning-tree-weighted map and the mated-CRT map so that the corresponding trees are close~\cite{ghs-map-dist,zaitsev-kmt}.
\item
The ``mating of trees" theorem from~\cite{wedges} implies that we can couple the mated-CRT map to $\sqrt{2}$-LQG by equivalently defining the mated-CRT map as a ``discretization'' of a $\sqrt{2}$-LQG surface.  This discretization is defined in terms of an independent Schramm-Loewner evolution curve with parameter 8 (SLE$_8$)~\cite{schramm0} sampled on the surface.  

We will explain this last point in more detail in Section~\ref{sec-mated-crt} after defining the tools from LQG needed to understand it.  For the reader already familiar with those tools, here is a more precise statement of the definition of the mated-CRT map in terms of LQG. Let $h$ be the variant of the whole-plane Gaussian free field corresponding to the so-called $\sqrt 2$-quantum cone and let $\mu_h$ its associated $\sqrt 2$-LQG measure as defined in~\cite{shef-kpz}. 
Let $\eta$ be an SLE$_8$ from $\infty$ to $\infty$ sampled independently from $h$ and parametrize $\eta$ so that $\mu_h(\eta([s,t]) = t-s$ whenever $s,t\in \BB R$ with $s < t$. Then the mated-CRT map agrees in law with the adjacency graph of unit LQG mass ``cells" $\eta([x-1,x])$ for $x\in \BB Z$. 
\end{itemize}

\begin{remark} \label{remark-d}
The exponent $d$ of Definition~\ref{def-d} also describes several quantities related to $\sqrt{2}$-LQG.  (The reader not familiar with LQG should feel free to skip the rest of this remark.)  For example, $d$ appears in the so-called Liouville heat kernel~\cite{dzz-heat-kernel} and in various continuum approximations (i.e., approximations defined in terms of the LQG surface) of $\sqrt{2}$-LQG distances such as Liouville graph distance and Liouville first passage percolation (LFPP)~\cite{dg-lqg-dim,dzz-heat-kernel}.  
The recent papers~\cite{dddf-lfpp,local-metrics,lqg-metric-estimates,gm-confluence,gm-uniqueness,gm-coord-change}
constructed a metric (distance function) on an LQG surface as the limit of Liouville first passage percolation. It is shown in~\cite{gp-kpz} that $d$ is the Hausdorff dimension of a $\sqrt 2$-LQG surface equipped with this metric.
\end{remark}

\medskip 
\noindent\textbf{Acknowledgments.} We thank two anonymous referees for helpful comments on an earlier version of this paper.
J.P.\ was partially supported by the National Science Foundation Graduate Research Fellowship under Grant No. 1122374.

\subsection{Outline}
\label{sec-outline}

In Section~\ref{mapintro}, we review the two important combinatorial properties of the UITM that we use in this paper: its encoding in terms of a pair of independent discrete random trees via the Mullin bijection that we described in Section~\ref{sec-intro-tool2}; and the relationship between DLA and LERW on the UITM that we described in Section~\ref{sec-intro-tool1}.

In Section~\ref{sec-proof-conditional} we apply these two combinatorial properties of the UITM to prove Theorems~\ref{main3},~\ref{thm-lerw} and~\ref{thm-finite-map} conditional on a relationship between two exponents associated with distances in the UITM, which we state as Theorem~\ref{thm-chi}. The first of these exponents is the ball volume exponent $d$ (Definition~\ref{def-d}). The other exponent, which we call $\chi$, describes the ``internal" graph-distance diameter of certain submaps of the UITM (i.e., the graph distance along paths required to stay in the submap) and is shown to exist in~\cite{ghs-dist-exponent,ghs-map-dist}. 
Theorem~\ref{thm-chi} asserts that $\chi = 1/d$, and we use it to relate distances in the UITM to distances in tree-weighted maps with boundary---in particular, we apply it to the map obtained by cutting along the edges of a LERW or DLA cluster as in Figure~\ref{fig-cutting}). 

The proof of Theorem~\ref{thm-chi} occupies most of the paper. 
To prove that $\chi = 1/d$, we will need several estimates for graph distances in spanning-tree weighted planar maps with boundary which we prove using the coupling with mated-CRT maps and Liouville quantum gravity that we described in Section~\ref{sec-intro-tool2} above. In Section~\ref{LQGreview}, we review the tools from the theory of SLE and LQG that we need in the proof. 
We then prove Theorem~\ref{thm-chi} in Sections~\ref{outline}-~\ref{proofpart2} by establishing various estimates for the mated-CRT map, then transferring to spanning-tree weighted maps via the strong coupling results of~\cite{ghs-map-dist}. See the beginnings of the individual subsections for more detail on the arguments involved.

Finally, in Section~\ref{sec-open-problems} we discuss some open problems.

\subsection{Basic notation and terminology}
\label{sec-notation}

\noindent
We write $\BB N = \{1,2,3,\dots\}$ and $\BB N_0 = \BB N \cup \{0\}$. 
\medskip

\noindent
For $a < b$, we define the discrete interval $[a,b]_{\BB Z}:= [a,b]\cap\BB Z$. 
\medskip

\noindent
If $f  :(0,\infty) \rta \BB R$ and $g : (0,\infty) \rta (0,\infty)$, we say that $f(\ep) = O_\ep(g(\ep))$ (resp.\ $f(\ep) = o_\ep(g(\ep))$) as $\ep\rta 0$ if $f(\ep)/g(\ep)$ remains bounded (resp.\ tends to zero) as $\ep\rta 0$. We similarly define $O(\cdot)$ and $o(\cdot)$ errors as a parameter goes to infinity. 
\medskip

\noindent
If $f,g : (0,\infty) \rta [0,\infty)$, we say that $f(\ep) \preceq g(\ep)$ if there is a constant $C>0$ (independent from $\ep$ and possibly from other parameters of interest) such that $f(\ep) \leq  C g(\ep)$. We write $f(\ep) \asymp g(\ep)$ if $f(\ep) \preceq g(\ep)$ and $g(\ep) \preceq f(\ep)$. 
\medskip

\noindent
Let $\{E^\ep\}_{\ep>0}$ be a one-parameter family of events. We say that $E^\ep$ occurs with
\begin{itemize}
\item \emph{polynomially high probability}  as $\ep\rta 0$ if there is a $p > 0$ (independent from $\ep$ and possibly from other parameters of interest) such that  $\BB P[E^\ep] \geq 1 - O_\ep(\ep^p)$. 
\item \emph{superpolynomially high probability}  as $\ep\rta 0$ if $\BB P[E^\ep] \geq 1 - O_\ep(\ep^p)$ for every $p>0$. 
\end{itemize}
We similarly define events which occur with polynomially or superpolynomially high probability as a parameter tends to $\infty$.  
\medskip

\noindent
If $G$ is a graph and $A,B\subset G$ are sets of vertices and/or edges, we write $\op{dist} (A,B ; G)$ for the $G$-graph distance between $A$ and $B$ (the $G$-graph distance between two edges is the minimum of the $G$-graph distance between their endpoints). 
We also write $\op{diam} (A ; G) := \sup_{x,y \in A} \op{dist} (x,y ; G)$ and $\op{diam}(G) = \op{diam}(G; G)$.
\medskip

\noindent
Finally, we use the following terminology for planar maps:
\begin{itemize}
\item
A \emph{planar map with boundary} is a planar map $M$ with a distinguished face $f_\infty$, called the \emph{external face}. 
\item
The \emph{boundary} $\partial M$ of $M$ is the subgraph of $M$ consisting of the vertices and edges on the boundary of $f_\infty$. 
\item
$M$ is said to have \emph{simple boundary} if $\bdy M$ is a simple cycle (equivalently, each vertex of $\bdy M$ corresponds to a single prime end of $f_\infty$). 
\end{itemize}
All of our planar maps with boundary will have simple boundary.

\section{Two key combinatorial properties of the UITM} 
\label{mapintro}

In this section, we will describe in greater detail the two key combinatorial properties of the UITM that, as we described in Sections~\ref{sec-intro-tool1} and~\ref{sec-intro-tool2}, provide us with the two tools we need to analyze external DLA on the UITM:
\begin{enumerate}
\item
The relationship between DLA and LERW on the UITM, which allows us to reduce the task of deriving Theorems~\ref{main3},~\ref{thm-lerw} and~\ref{thm-finite-map} to proving certain estimates for distances in spanning-tree-weighted maps with boundary.
\item
The \emph{Mullin bijection}, which will allow us to prove those distance estimates by coupling the UITM to $\sqrt{2}$-LQG via the mated-CRT map.
\end{enumerate} 

In Section~\ref{sec-mullin}, we review the Mullin bijection; and, in Section~\ref{sec-lerw-dla}, we describe and prove the relationship between DLA and LERW on the UITM. (We have organized the two subsections in this order because we will need a result in the second subsection that can easily be seen by applying the Mullin bijection.)
 
\subsection{The Mullin bijection}
\label{sec-mullin}

In this section, we review the Mullin bijection, a combinatorial bijection that encodes a spanning-tree-decorated planar map by a nearest-neighbor walk on $\BB Z^2$. This bijection was first discovered in~\cite{mullin-maps} and is explained more explicitly in~\cite{bernardi-maps,shef-burger}.   We will describe the infinite-volume version of the Mullin bijection, which describes a correspondence between the UITM and a standard nearest-neighbor simple random walk in $\BB{Z}^2$. At the end of the subsection, we will briefly describe the version of the Mullin bijection for finite spanning-tree-decorated planar maps (possibly with boundary), since we will use it in the proof of Theorem~\ref{thm-finite-map}.  

We follow closely the exposition in~\cite{ghs-map-dist}.  We have also included an illustration of the bijection in Figure~\ref{fig-mullin-bijection}.

\begin{figure}[ht!]
\begin{center}
\includegraphics[scale=.6]{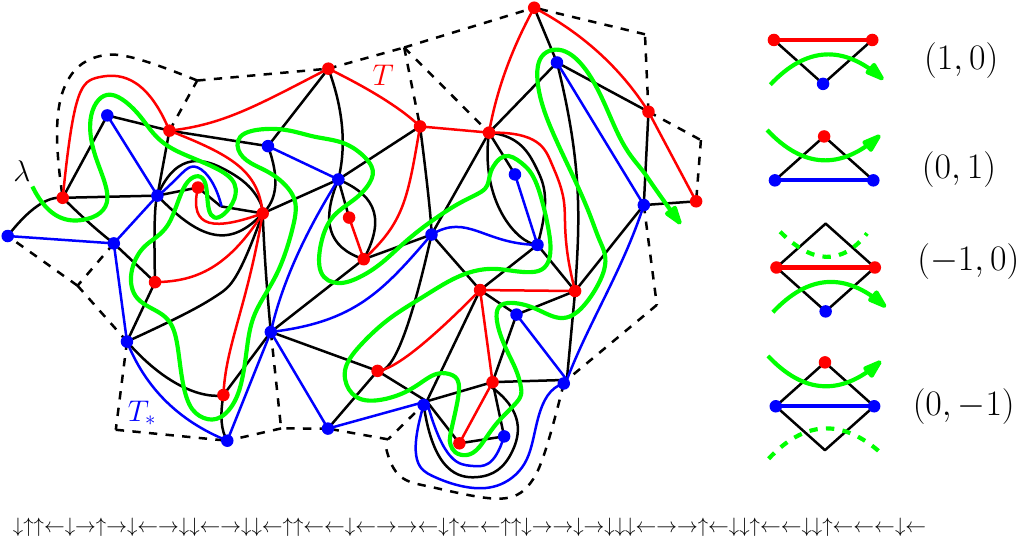} 
\caption[Infinite-volume Mullin bijection]{\label{fig-mullin-bijection} \textbf{Left:} The subset of the triangulation $\mcl Q \cup T\cup T_*$ consisting of the triangles in $\lambda([a,b]_{\BB Z})$, with edges of $\mcl Q$ (resp.\ $T$, $T_*$) shown in black (resp.\ red, blue). The green curve indicates the order in which the triangles are hit by $\lambda$. The edges of triangles of $\mcl T$ which do not belong to $\lambda([a,b]_{\BB Z})$, but which are adjacent to triangles in $\lambda([a,b]_{\BB Z})$, are shown as dotted lines. The vertices of $M|_{[a,b]}$ are the red vertices and the edges of $M|_{[a,b]}$ are the red edges shown in the figure and the edges of $M$ which cross the blue edges in the figure (not shown). \textbf{Right:} The correspondence between $M$ and $\mcl Z$. If $i\in \BB Z$ and $\lambda(i)$ has an edge in the red tree $ T$, then $\mcl Z_i - \mcl Z_{i-1}$ is equal to $(1,0)$ or $(-1,0)$ according to whether the other triangle which shares this same edge of $T$ is hit by $\lambda$ before or after time $i$. 
The other coordinate of $\mcl Z$ is defined symmetrically. 
\textbf{Bottom:} The steps of $\mcl Z$ corresponding to this segment of $\lambda$ if we assume that each of the exterior triangles (i.e., those with dotted edges) is hit by $\lambda$ before each of the non-dotted triangles. 
}
\end{center}
\end{figure}

Let $(M,e_0,T)$ be the UITM decorated by a uniform spanning tree on the map (see Proposition~\ref{prop-graph-conditions}). We begin by defining several maps in terms of $(M,e_0,T)$:
\begin{itemize}
\item
Let $M_*$ be the dual map of $M$, i.e., the map whose vertices correspond to faces of $M$ and with two vertices joined by an edge iff the corresponding faces of $M$ share an edge.  
\item Let $T_*$ be the dual spanning tree of $T$, i.e., the tree whose edges are the set of edges of $M_*$ which cross edges in $M \backslash T$.  
\item
Let $\mcl Q = \mcl Q(M)$ be the \emph{radial quadrangulation}, whose vertex set is the union of the vertex sets of $M$ and $M_*$, with two vertices connected by an edge iff one is a vertex of $M_*$ and the other is  a vertex of $M$ incident to the face of $M$ that the first vertex represents. We declare that the root edge of $\mcl Q$ is the edge $\BB e_0 $ with the same initial endpoint of $e_0$ and which is the first edge in $\mcl Q$ with this initial endpoint moving in clockwise order from $e_0$.
\end{itemize}

Each face of $\mcl Q$ is a quadrilateral with one diagonal an edge in $M$ and one an edge in $M_*$; exactly one of these diagonals corresponds to an edge in $T \cup T_*$.  This implies that  the union of $\mcl Q$, $T$ and $T_*$ forms a triangulation with the same vertex set as $\mcl Q$. Let $\mcl T$ be the planar dual of this triangulation, so that $\mcl T$ is the adjacency graph on triangles of $\mcl Q\cup T\cup T_*$, where two triangles of $\mcl Q\cup T\cup T_*$ are adjacent if they share an edge. 
We declare that the root edge of $\mcl T$ is the edge of $\mcl T$ that crosses $\BB e_0$, oriented so that the initial endpoint of $\BB e_0$ is to its left.

Let $\lambda$ be the unique path from $\BB Z$ onto the set of vertices of $\mcl T$ (i.e., the set of triangles of $\mcl Q\cup T\cup T_*$) such that
\begin{itemize}
\item $\lambda(0)$ and $\lambda(1)$ are the initial and terminal points of the root edge of $\mcl T$, respectively; and,
\item for each $j \in \BB{Z}$, the triangles corresponding to $\lambda(j)$ and $\lambda(j-1)$ share an edge of $\mcl Q$.
\end{itemize}
In other words, the path  $\lambda$ passes through each triangle of $\mcl Q\cup T\cup T_*$ without crossing any of the edges of either $T$ or $T_*$.

We use the path $\lambda$ to define a walk $\mcl Z  = (\mcl L , \mcl R)$ on $\BB Z^2$, parametrized by $\BB Z$ and with increments in the set $\{(0,1) , (1,0) , (-1,0) , (0,-1)\}$, as follows. Set $\mcl Z_0 := 0$. For $j\in\BB Z$, we set $\mcl Z_j - \mcl Z_{j-1}$ equal to $(1,0)$   (resp.\ $(-1,0)$) if the triangle corresponding to $\lambda(j)$ shares an edge of $T$ with a triangle hit by $\lambda$ after  (resp. before) time $j$; and $(0,1)$   (resp.\ $(0,-1)$) if the triangle corresponding to $\lambda(j)$ shares an edge of $T_*$ with a triangle hit by $\lambda$ after  (resp. before) time $j$.

Since $(M,e_0,T)$ is random, the walk $\mcl Z$ that we have constructed is a random walk on $\BB Z^2$. 
The law of this walk is known explicitly (see, e.g.,~\cite[Section 4.2]{shef-burger} for a proof). 

\begin{thm}[The Mullin bijection for the UITM] \label{thm-mullin}
The walk $\mcl Z$ constructed above from the UITM $(M,e_0,T)$ has the law of a standard nearest-neighbor simple random walk in $\BB Z^2$.
\end{thm}

Given the walk $\mcl Z$, we can a.s.\ recover $(M,e_0,T)$ by first building $\mcl Q\cup T\cup T_*$ one triangle at a time via a \emph{sewing procedure}. This direction of the bijection is often called the \emph{inverse Mullin bijection}. Roughly speaking, given the submap traced by $\lambda$ in the time interval $(-\infty,t]$, the increment $\lambda(t+1)-\lambda(t)$ determines how to construct the submap traced in the time interval $(-\infty,t+1]$.  See Figure~\ref{fig-mullin-inverse} for a demonstration of how this procedure works.  We refer the reader to~\cite[Section 3.1.2]{ghs-map-dist} and~\cite[Section 4.1]{chen-fk} for further details on the inverse Mullin bijection.

\begin{figure}[ht!]
\begin{center}
\includegraphics[scale=.8]{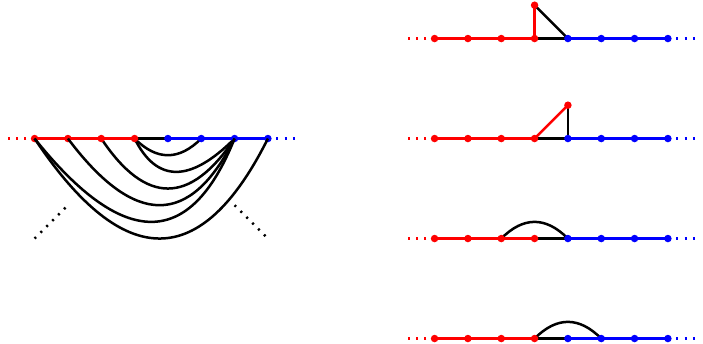} 
\caption[The inverse Mullin bijection]{\label{fig-mullin-inverse} An illustration of the sewing procedure in the inverse Mullin bijection, modeled closely after~\cite[Figure 7]{ghs-map-dist}.  \textbf{Left:} We have drawn (part of) the submap traced by $\lambda$ in the time interval $(-\infty,t]$ so that the boundary of the submap is a horizontal line, and the interior of the submap lies below that line.  Note that the boundary of the submap consists of an infinite ray of edges of $T$ (red) and an infinite ray of edges of $T_*$ (blue), joined together by a single edge of $Q$ (black).  \textbf{Right:} The value of the increment $\lambda(t+1)-\lambda(t)$ determines how to add a triangle to the submap to obtain the submap traced by $\lambda$ in the time interval $(-\infty,t+1]$.  We have demonstrated this for each of the four possible values of the increment $\lambda(t+1)-\lambda(t)$.
}
\end{center}
\end{figure}

There is a variant of the Mullin bijection for finite spanning-tree-weighted planar maps, with or without boundary, which corresponds to restricting the walk $\mcl Z$ to a finite interval and conditioning on the appropriate event.  In the setting of the infinite-volume Mullin bijection described above, we denote by $M|_{[a,b]}$ (for any two real numbers $a<b$) the submap of $M$ consisting of the vertices belonging to $M$ which lie on triangles of $Q \cup T \cup T_*$ traced by the curve $\lambda$ during the time interval $[a,b]_{ \BB Z}$, along with all edges in $M$ connecting pairs of such vertices. We define the Mullin bijection for finite spanning-tree-weighted planar maps in terms of the infinite-volume version as follows:
\begin{itemize}
\item (Without boundary) Suppose $n\in\BB N$ and we condition on the event that the walk $\mcl Z|_{[0,2n]_{\BB Z}}$ stays in $\BB N_0^2$ and satisfies $\mcl Z_{2n} = (0,0)$. 
Let $M_n = M_{[0,2n]}$ and let $T_n :=  T\cap M_n $.
Then the conditional law of the decorated map $(M_n , e_0 , T_n)$ is uniform on the set of triples consisting of a planar map with $n$ edges together with an oriented root edge and a spanning tree.
\item (With boundary) Suppose $n  \in \BB N$ and $\ell \in \BB N_0$ and we condition on the event that the walk $\mcl Z|_{[0,2n]_{\BB Z}}$ stays in $\BB N_0^2$ and satisfies $\mcl Z_n = (\ell,0)$. Let $M_n = M_{[0,2n]}$ and let $T_n := (T\cap M_n) \cup \{e_0\}$.
Then the conditional law of the decorated map $(M_n , e_0 , T_n)$ is uniform on the set of triples consisting of a planar map with $n- \ell$ interior edges and a simple boundary cycle of length $\ell$, together with a boundary root edge oriented counterclockwise along the boundary, and a spanning tree with wired boundary conditions (i.e., the boundary edges are counted as part of the tree). 
\end{itemize}
The case of finite maps without boundary is described, e.g., in~\cite[Section 4]{shef-burger}. The case of finite maps with boundary may be deduced from the boundary-free case, since we can compare a map with boundary to the map without boundary that we get by including the external face in the map.

\subsection{DLA and loop-erased random walk on the UITM}
\label{sec-lerw-dla}

We now describe the relationship between DLA and LERW on the UITM.  To do so, we first check that the uniform spanning tree on the UITM is a.s.\ one-ended, which will then allow us to apply Proposition~\ref{prop-graph-conditions2} to define harmonic measure from infinity and characterize it in terms of a uniform spanning tree on the UITM.

\begin{lem}
\label{lem-uitwm-conditions}
Let $(M,e_0)$ be the UITM.  Then the uniform spanning tree on $M$ is a.s.\ one-ended.  
\end{lem}

\begin{proof}
To see that $T$ is a.s.\ one-ended, we apply the (infinite-volume) Mullin bijection.  On the one hand, the curve $\lambda$ does not cross the tree $T$; on the other hand, $\lambda$ hits all of the triangles of $\mcl T$.  This is possible only if $T$ is one-ended.
\end{proof}

By Lemma~\ref{lem-uitwm-conditions} and Proposition~\ref{prop-graph-conditions2}, we can describe the laws of LERW and DLA on the UITM by sampling uniform spanning trees on the UITM.  We will apply these characterizations of LERW and DLA on the UITM to prove a rigorous link between the two.  
We first prove the variant of this link for finite random planar maps.

\begin{lem} 
Let $(M,e_0)$ be a spanning-tree weighted random planar map with $n$ edges and let $w$ be a uniformly chosen vertex of $M$. {Fix a nonnegative integer $m < n$.} Define the (unrooted) random planar maps $M^{(m)}$ and $\wt M^{(m)}$ as follows: 
\begin{itemize}
\item
Sample a LERW from $v_0$ to $w$ in $M$.  On the {(positive probability)} event $E^{(m)}$ that this LERW has length $>m$, let $M^{(m)}$ be the random planar map obtained by cutting along  the first $m$ edges of the LERW from $v_0$ to $w$.  (We illustrate this cutting procedure in Figure~\ref{fig-bijection}.) Also, let $w^{(m)}$ be the vertex in $M^{(m)}$ corresponding to $w$. 
\item
Sample a DLA in $M$ with initial vertex $v_0$ and targeted at $w$.  On the {(positive probability)} event $\wt E^{(m)}$ that the DLA contains $>m$ edges, let $\wt M^{(m)}$ be the random planar map obtained by cutting along all of the edges of the time $m$ DLA cluster.  Also, let $\wt w^{(m)}$ be the vertex in $M^{(m)}$ corresponding to $w$.
\end{itemize}

Then the conditional joint law of  $M^{(m)}$ and $w^{(m)}$ given $E^{(m)}$ is equal to the conditional joint law of  $\wt M^{(m)}$ and  $\wt w^{(m)}$ given $\wt E^{(m)}$.  Both of these laws are that of a spanning-tree-weighted random planar map with $n+m$ total edges and a simple boundary cycle of length $2m$, decorated by a uniform interior vertex.
\label{lem-bijection-cutting}
\end{lem}

We emphasize that Lemma~\ref{lem-bijection-cutting} does \emph{not} describe the conditional law of $\wt M^{(m)}$ given the DLA cluster, only the conditional law given $\wt E^{(m)}$.

\begin{proof}[Proof of Lemma~\ref{lem-bijection-cutting}] 
\noindent\textit{Step 1: describing the law of $(M^{(m)} ,w^{(m)})$.}
By Proposition~\ref{prop-graph-conditions}, we can couple the LERW, and therefore the event $E^{(m)}$ and the pair $(M^{(m)}, w^{(m)})$, to a uniform spanning tree $T$ on $M$.  We can take the first $m$ edges of the LERW to be the first $m$ edges in the path from $v_0$ to $w$ in $T$.  Now, since $(M,e_0)$ is a UITM and $w$ is a uniform vertex of $M$, the joint law of the 4-tuple $(M,T , e_0 , w)$ is \emph{uniform} on all $4$-tuples $(M,T , e_0 , w)$ satisfying the following conditions.
\begin{lst}
 \label{4-tuple-1}
\begin{itemize}
\item $M$ is a planar map without boundary having $n$ total edges, 
\item $T$ is a spanning tree of $M$, 
\item $e_0$ is an edge in $M$ (whose terminal vertex we denote by $v_0$),  and
\item $w$ is a vertex of $M$ at graph distance $>m$ in the tree $T$ from $v_0$.
\end{itemize}
\end{lst}
Given a 4-tuple $(M,T , e_0 , w)$ satisfying the conditions~\eqref{4-tuple-1}, {if we \textbf{fix} a nonnegative integer $k \leq m$}, we can cut along the first $k$ edges of the path from $v_0$ to $w$ in the tree $T$.  We can describe the image of $(M,T , e_0 , w)$ under this  ``cutting procedure'': we get a 4-tuple $(M',T', e_0' , w')$ satisfying the following conditions.
\begin{lst}
 \label{4-tuple-2}
\begin{itemize}
\item
$M'$ is a planar map with $n-k$ interior edges and $2k$ boundary edges,
\item
$T'$ is a spanning tree of $M'$ with wired boundary conditions, 
\item 
$w'$ is a vertex in $M'$ at positive distance from the boundary of $M'$ w.r.t.\ the graph distance in the tree $T'$, and 
\item
$e_0'$ is an edge in $M'$ whose terminal vertex $v_0' \in \bdy M'$ is described as follows. If $p$ is the point of $\bdy M'$ which is hit by the branch of $T'$ started from $w'$, then $v_0'$ is the ``antipodal" point of $p$ on $\bdy M'$, i.e., the point on $\bdy M'$ which is separated from $p$ by two arcs of $\bdy M'$ of length $k$.  
\end{itemize}
\end{lst}
Conversely, we can define a ``gluing procedure'' that takes a 4-tuple $(M',T' , e_0', w')$ satisfying the conditions~\eqref{4-tuple-2} and produces a 4-tuple $(M,T , e_0 , w)$ satisfying the conditions~\eqref{4-tuple-1}.  Given a 4-tuple $(M',T' , e_0', w')$ satisfying~\eqref{4-tuple-2}, we  identify pairs of vertices along the boundary of $M'$ which lie at equal distance (along $\bdy M'$) from $v_0'$, and we identify the corresponding pairs of boundary edges of $M'$.  After making these identifications, we obtain a 4-tuple $(M,T , e_0 , w)$ satisfying the conditions~\eqref{4-tuple-1}. 

Since the cutting and gluing procedures are inverses of each other, we have defined a bijection between 4-tuples $(M,T , e_0 , w)$ satisfying the conditions~\eqref{4-tuple-1} and 4-tuples $(M',T' , e_0', w')$ satisfying the conditions~\eqref{4-tuple-2}.  See Figure~\ref{fig-bijection} for an illustration of this bijection.

\begin{figure}[ht!] \centering
\begin{tabular}{ccc} 
\includegraphics[width=0.3\textwidth]{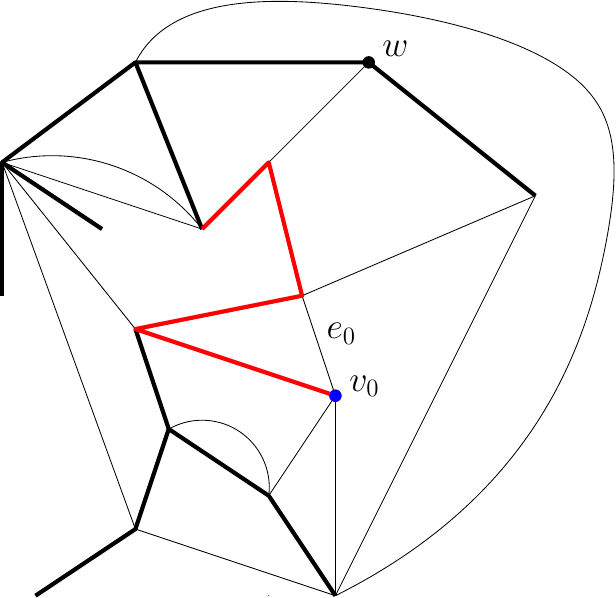}
&
\includegraphics[width=0.3\textwidth]{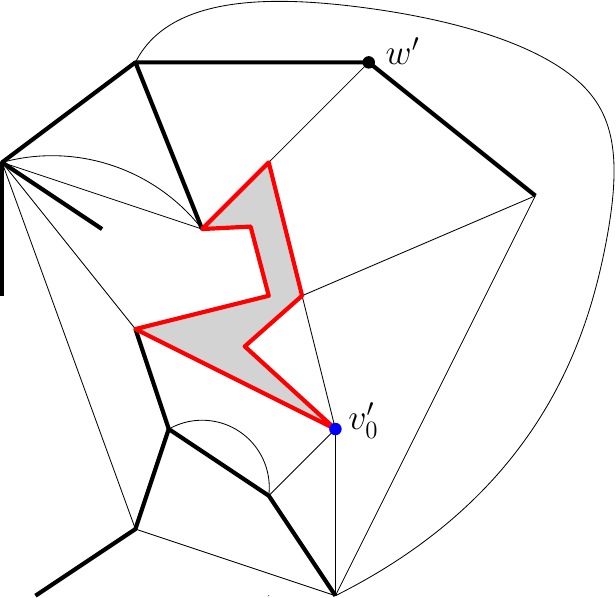}
&
\includegraphics[width=0.3\textwidth]{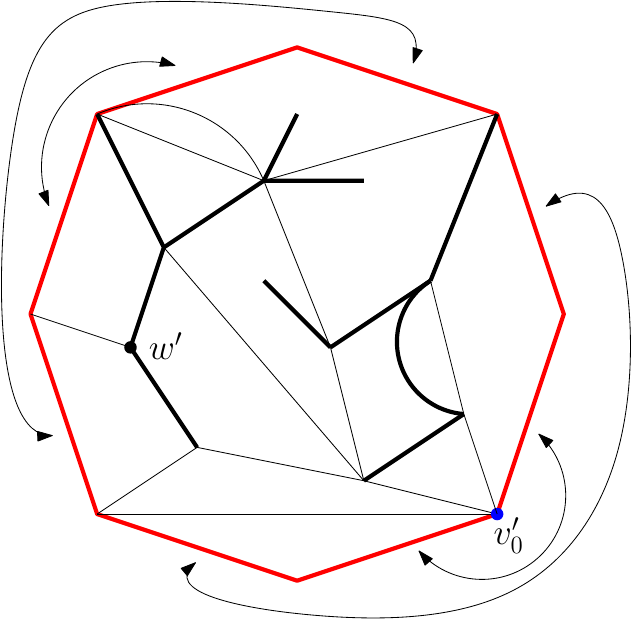}
\end{tabular}
\caption{An illustration of the cutting-gluing bijection in the proof of Lemma~\ref{lem-bijection-cutting} with $n=26$ and $k = 4$. \textbf{Left:} A 4-tuple $(M,T , e_0 , w)$ satisfying the conditions~\eqref{4-tuple-1}, with the tree $T$ in bold black, the root vertex $v_0$ in blue.  We have colored in red the first $k$ steps of the LERW from $v_0$ to $w$ coupled to $T$.  \textbf{Center:} Cutting along those $k$ edges yields a 4-tuple $(M',T' , e_0', w')$ satisfying the conditions~\eqref{4-tuple-2}.  \textbf{Right:} A nicer embedding of this 4-tuple $(M',T' , e_0', w')$.   We can reverse the cutting operation and recover $(M,T , e_0 , w)$ by pasting together boundary edges as indicated.}
\label{fig-bijection}
\end{figure}

The above bijection implies that if $(M,T,e_0,w)$ is sampled uniformly from the set of 4-tuples satisfying~\eqref{4-tuple-1}, then the corresponding $(M',T',e_0',w')$ is sampled uniformly from the set of 4-tuples satisfying~\eqref{4-tuple-2}. From this {and} the definition of $(M^{(k)} , w^{(k)})$ in the lemma statement, we obtain that the conditional joint law of the   pair $(M^{(k)}, w^{(k)})$ conditioned on the event $E^{(k)}$ is exactly the marginal joint law of $(M',w')$ for $(M',T' , e_0', w')$ a uniform 4-tuple satisfying the conditions~\eqref{4-tuple-2}.  Setting $k = m$, this proves that the pair  $(M^{(m)}, w^{(m)})$ has the desired conditional joint law given $E^{(m)}$.    \medskip

\noindent\textit{Step 2: comparing LERW and DLA.}
It remains to show that the pair $(\wt M^{(m)},  \wt w^{(m)})$ has the same conditional joint law given $\wt E^{(m)}$.  The idea here is that we can describe the DLA growth process as a modified version of LERW in which we ``reshuffle'' the tip of the path at each integer time, so that instead of sampling according to harmonic measure at the tip of the path, we sample from harmonic measure on the entire cluster.  We can think of this ``reshuffling'', roughly speaking, as a resampling of the uniform spanning tree that we use to build the LERW.  This means that if, at some step of constructing the LERW, we cut along the path as above and ``forget'' both the tip and the tree, the law of the next edge that we add is the same as if we were constructing a DLA cluster.

To make this idea precise, we first observe that we can strengthen our last result to get, for each $k$ and conditioned on the event $E^{(k)}$, a coupling of 
\begin{itemize}
\item the pair $(M^{(k)},  w^{(k)})$, 
\item the {indicator random variable associated to the} event $E^{(k+1)}$, and 
\item the pair $(M^{(k+1)},  w^{(k+1)})$. 
\end{itemize} 
In this coupling, we identify the pair $(M^{(k)},  w^{(k)})$ with  $(M',w')$ for $(M',T' , e_0', w')$ a uniform 4-tuple satisfying the conditions~\eqref{4-tuple-2}.  {Recalling the definition of the point $p$ in conditions~\eqref{4-tuple-2}, we identify $\mathbf{1}_{E^{(k+1)}}$ as the indicator of the event that $p$ and $w'$ are not adjacent in $T'$.} And we identify the pair $(M^{(k+1)},  w^{(k+1)})$ as the image of $(M',w')$ under the operation of cutting along {the first edge in the path in $T'$ from $p$ to $w'$}.   See Figure~\ref{fig-jointlaw} for an illustration of this coupling.

\begin{figure}[t!]
    \centering
    \subfigure[]{\label{fig:c}\includegraphics[width=0.3\textwidth]{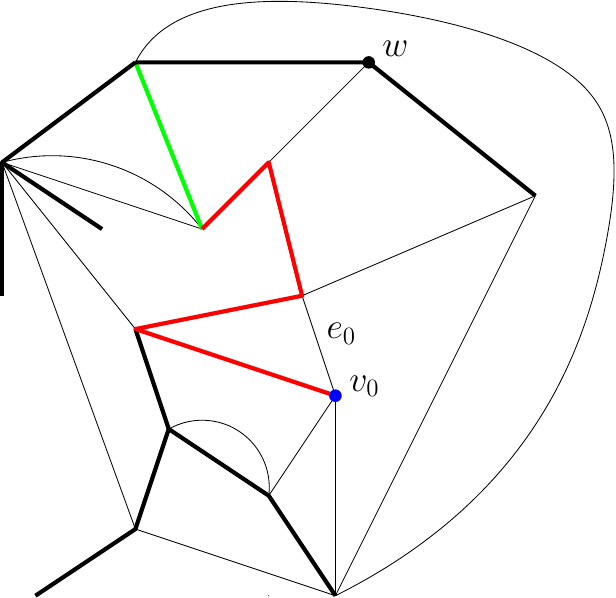}}
    \quad
    \subfigure[]{\label{fig:d}\includegraphics[width=0.3\textwidth]{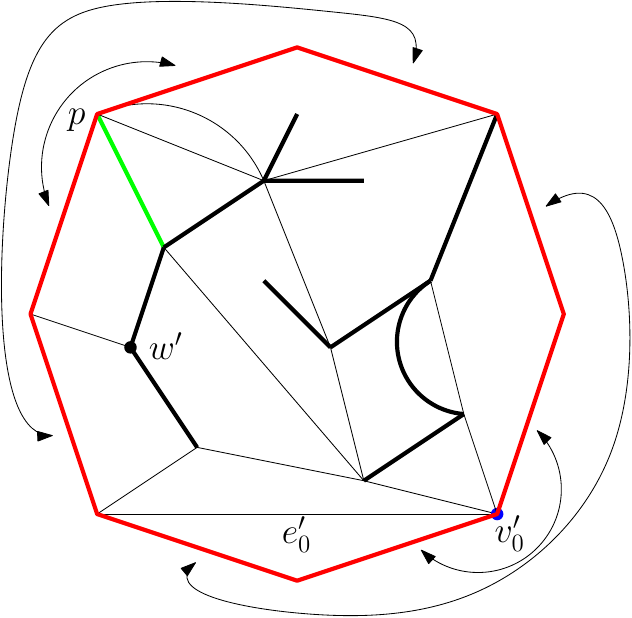}}
\caption{ {An illustration of the coupling described in Step 2 of the proof of Lemma~\ref{lem-bijection-cutting}.  Setting $n=26$ and $k = 4$, we start in (a) with a spanning-tree-weighted map $(M,e_0)$ decorated by a uniform vertex $w$ and uniform spanning tree $T$, and we condition on the event $E^{(k)}$ that the LERW from $v_0$ to $w$ has more than $k$ edges.  We have colored the first $k$ edges of the LERW in red, and the $(k+1)$-st edge in green.   Based on the cutting procedure we illustrated in Figure~\ref{fig-bijection}, we can produce the conditional joint law of $(M^{(k+1)},  w^{(k+1)})$ given $E^{(k+1)}$ by conditioning on $E^{(k+1)}$ and cutting along the first $k+1$ edges of the LERW (the edges colored in red and green).  Alternatively, we can divide this cutting procedure into two stages.  In the first stage, we cut along just the first $k$ edges (colored in red) to produce the map in (b).  The law of this map is the conditional joint law of $(M^{(k)},  w^{(k)})$ given $E^{(k)}$.  In the second stage, we condition on the event $E^{(k+1)}$ and cut along the edge in (b) colored in green, which corresponds to the green edge in (a).  The correspondence between (a) and (b) yields a coupling, on the event $E^{(k)}$, of  $(M^{(k)},  w^{(k)})$ with $\mathbf{1}_{E^{(k+1)}}$ and $(M^{(k+1)},  w^{(k+1)})$.  Under this coupling, $\mathbf{1}_{E^{(k+1)}}$ corresponds to the indicator of the event that the green edge in (b) does not contain the vertex $w'$, and $(M^{(k+1)},  w^{(k+1)})$ corresponds to the map obtained from the map $(M^{(k)},  w^{(k)})$ in (b) by cutting along the green edge.}}
\label{fig-jointlaw}
\end{figure}

We use this coupling to prove the desired result by finite induction on $k$.  The case $k=0$ is trivial, so assume that for some $k$, the conditional law of the pair $(\wt M^{(k)},  \wt w^{(k)})$ given  $\wt E^{(k)}$ is also the marginal joint law of $(M',w')$ for $(M',T' , e_0', w')$ a uniform 4-tuple satisfying the conditions~\eqref{4-tuple-2}.  Remember that we sample the $(k+1)$-th edge of the DLA cluster according to harmonic measure from $w$ on the boundary of the cluster.  By the cutting-gluing bijection, this means that we can couple both this $(k+1)$-th edge and the pair $(\wt M^{(k)},  \wt w^{(k)})$ to the uniform 4-tuple  $(M',T' , e_0', w')$, by taking the $(k+1)$-th edge to be the {first edge of the path in $T'$ from $p$ to $w'$}.  (Here we have applied Wilson's algorithm to say that, if we ``forget'' the tree $T'$ and the edge $e_0'$, the law of this first edge given the triple $(M',w')$ is harmonic measure from $w'$ on the boundary of $M'$.) Hence, we have the same coupling that we had in the LERW case for the pair $(\wt M^{(k)},  \wt w^{(k)})$, the pair $(\wt M^{(k+1)},  \wt w^{(k+1)})$, and the event $\wt E^{(k+1)}$,  conditional on $\wt E^{(k)}$.  This proves the inductive step.
\end{proof}

By taking the spanning-tree-weighted random planar map in  Lemma~\ref{lem-bijection-cutting} to be arbitrarily large and using the fact that the UITM is a Benjamini-Schramm limit of finite spanning-tree-weighted random planar maps, we get a relationship between DLA and LERW on the UITM.

\begin{lem}
Let $(M,e_0)$ be a UITM.  The law of the  planar map obtained by cutting along  the first $m$ edges of an external DLA growth process started at $v_0$ and targeted at infinity is equal to the law of the planar map obtained by cutting along the first $m$ edges of a loop-erased random walk from $v_0$ to infinity.
\label{complementinfinite}
\end{lem}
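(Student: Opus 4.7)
The plan is to deduce Lemma~\ref{complementinfinite} from its finite counterpart, Lemma~\ref{complementfinite}, via a Benjamini-Schramm local-limit argument. Let $(M_n, T_n)$ be a uniform spanning-tree-decorated planar map with $n$ edges, and conditional on $M_n$ let $u_n, w_n$ be independent uniformly random vertices. The pair $(M_n, u_n)$ converges in the local topology to the UITWM $(M, v_0)$ by~\cite{shef-burger,chen-fk}, and because the ball of any fixed radius around $u_n$ has size $O(1)$ in the limit while $M_n$ has $n$ vertices, the probability that $\op{dist}(u_n, w_n ; T_n) \geq m$ tends to $1$ for any fixed $m$. On this asymptotically full event, Lemma~\ref{complementfinite} tells us that the LERW-cut map $M_n^{(m)}$ and the DLA-cut map $\wt M_n^{(m)}$ agree in law.

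To pass to the limit, I would argue that the cut map is a local functional of the underlying map and the first $m$ edges of the cluster, so convergence in law of the cut maps around $u_n$ reduces to joint convergence of the local structure around $u_n$ together with the first $m$ cluster edges. For both the LERW and DLA processes, each successive edge is sampled from a harmonic measure from $w_n$ restricted to a finite set of edges incident to the current cluster. Since simple random walk on the UITWM is recurrent and the uniform spanning tree is one-ended (cf.~\cite{gn-recurrence} and~\cite[Theorem 14.2]{blps-usf} as recalled above), the harmonic measure from a receding target converges to harmonic measure from infinity on any fixed finite set of edges. Combining this convergence with the local convergence of the maps and inducting on the step $k = 1, \dots, m$, one obtains convergence in distribution of the first $m$ cluster edges in both processes, hence of the corresponding cut maps, and the equality in law from the finite case transfers to the UITWM.

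The main obstacle I anticipate is the iterative nature of the convergence: the harmonic measure at step $k+1$ is supported on a random set of edges determined by the previous $k$ steps, so one must propagate the joint convergence of ``local neighborhood plus cluster'' through each stage. The resolution is that the cluster is finite at every intermediate step, so only harmonic measures on finite sets of edges are ever relevant; on any such fixed finite set, harmonic measure from a receding vertex depends continuously on the local graph structure and converges to harmonic measure from infinity, which is precisely the data needed to close the induction and complete the proof.
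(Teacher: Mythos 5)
Your proposal is correct and follows essentially the same route as the paper: approximate the UITWM by finite spanning-tree-weighted maps via the Benjamini--Schramm limit, use recurrence (and one-endedness, via~\cite[Theorem 14.2]{blps-usf}) to show that harmonic measure from a distant target on the finitely many relevant edge sets approximates harmonic measure from infinity, run this through the $m$ steps of both growth processes, and transfer the finite-volume identity of Lemma~\ref{complementfinite}. The only difference is presentational --- you phrase it as convergence in law of the first $m$ cluster edges along a sequence $M_n \to M$, while the paper fixes $\ep$ and couples $M$ with a single finite map $M_N$ so that the two processes agree up to a total-variation error $\ep$ --- and both write-ups leave the ``harmonic measure is asymptotically local'' step at the same level of detail.
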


Lemma~\ref{complementinfinite} follows directly from the following assertion, which roughly states that we can approximate harmonic measure from infinity on the DLA and LERW by harmonic measure on these clusters viewed from a uniform vertex in a sufficiently large finite spanning-tree-weighted random planar map. 

\begin{lem}
Let $(M,e_0)$ be the UITM, and let $(M^n,e_0^n)$ be a sequence of edge-rooted finite maps that converge a.s.\ to $(M,e_0)$ in the Benjamini-Schramm sense.  Also, fix $m \in \BB N$ and $\ep >0$.  Then, for all $n$ sufficiently large, we have the following. For $n\in\BB N$, let $w^n$ be sampled uniformly from the vertex set of $M^n$. For every connected set $A$ of $m$ edges in $M$ that includes the root vertex, harmonic measure on $A$ from infinity in $M$ is $<\ep$-apart in total variation from harmonic measure on $A$ from $w^n$.  
\label{lem-coupling-benjamini-schramm}
\end{lem}

Note that, in the statement of Lemma~\ref{lem-coupling-benjamini-schramm}, we use $A$ to denote both an edge set in $M$ and the corresponding edge set in $M^n$.  This abuse of notation makes sense since, for large enough $n$, we have the isomorphism $B_m^M(v_0) \equiv B_m^{M^n}(v_0^n)$ due to the Benjamini-Schramm convergence.  


\begin{proof}[Proof of Lemma~\ref{lem-coupling-benjamini-schramm}]
From Proposition~\ref{prop-graph-conditions2} and Lemma~\ref{lem-uitwm-conditions}, we deduce that with probability at least $1-\ep$, the UITM $(M,e_0)$ satisfies the following two properties, for every choice of $A$ and some pair of integers $N < N'$ chosen to be sufficiently large:
 \begin{enumerate}
 \item
 \label{enum-property1}
 Harmonic measure on $A$ from infinity is well-approximated by harmonic measure on $A$ from \emph{any} vertex $w$ in $M$ at distance $ \geq N$ from the root.  Specifically, the two measures are a.s. $<\ep$-apart in total variation for all such $w$.
 \item
  \label{enum-property2}
 A random walk started in $B_N^M(v_0)$ remains in $B_{N'}^M(v_0)$ before hitting $v_0$ with probability at least $1 - \ep$. (Here we use the recurrence of the random walk on the UITM.)
 \end{enumerate}
 
Thus, it suffices to prove the conditions of the lemma for the UITM $(M,e_0)$ on the event that it satisfies these two properties.  For the rest of the proof, we fix $(M,e_0)$ sampled from this conditional law, and let $(M^n,e_0^n)$ be the corresponding sequence of approximating maps. We also fix the choice of cluster $A$.

Since $(M^n , e_0^n) \rta (M , e_0)$ in the Benjamini-Schramm sense, we can choose $n$ large enough so that $B_{N'}^M(v_0) $ and $ B_{N'}^{M^n}(v_0^n)$ are isomorphic as graphs.
We henceforth fix an isomorphism and use it to identify $B_{N'}^M(v_0) $ and $ B_{N'}^{M^n}(v_0^n)$. 
We divide the proof into two steps: 
\begin{enumerate}
\item
We describe a way of choosing a (non-uniform) random vertex $\wt w$ in $M$ such that harmonic measure on $A$ from $\wt w$ in $M$ is at most $\ep$-apart in total variation from harmonic measure on $A$ from a uniform vertex $w^n$   in $M^n$. 
\item
From our choice of vertex $\wt w$ in $M$, we show that (for large enough $n$) harmonic measure on $A$ from $\wt w$ in $M$ is at most $\ep$-apart in total variation from harmonic measure on $A$ from infinity in $M$. 
\end{enumerate}
 
We begin with the first step.  We can describe harmonic measure on $A$ from $w^n$ by sampling a random walk started at $w^n$ and run until it hits $A$.  Now, let $\wt w^n$ be the first vertex the random walk hits in the ball $B_N^{M^n}(v_0^n)$.  If $\op{dist} (w^n,v_0 ; M^n) \leq N$, then $\wt w^n = w^n$; otherwise, $w^n$ is some point on the boundary of the ball.  By the strong Markov property of random walk, the harmonic measure on $A$ as viewed from $\wt w^n$ is the same as the harmonic measure  on $A$ as viewed from $w^n$.  By property~\ref{enum-property2} above, this random walk will remain in the larger ball $B_{N'}^{M^n}(v_0^n)$ in $M^n$ with probability $> 1-\ep$. 
Let $\wt w$ be the vertex of $B_{N'}^M(v_0)$ corresponding to $\wt w^n$ under the isomorphism $B_{N'}^{M^n}(v_0^n) \equiv B_{N'}^M(v_0)$. 
Then the harmonic measure on $A$ in $M$ as viewed from $\wt w$ is at most $\ep$ apart in total variation from the harmonic measure on $A$ in $M^n$ as viewed from $\wt w^n$.
This completes the first step.

Now, for the second step, remember that $\wt w$ is a random vertex in $M$ chosen so that the distance between $\wt w$ and the root is exactly $N$  as long as $w^n$ is not itself in the radius-$N$ ball $B_{N}^{M^n}(v_0^n)$. 
By making $n$ larger if necessary, we can ensure that $w^n$ lies outside this metric ball with probability $>1-\ep$.  Thus, applying property~\ref{enum-property1} above completes the second step.
\end{proof}

\begin{proof}[Proof of Lemma~\ref{complementinfinite}]
Let $\ep > 0$, and let $N$ be a positive integer greater than $m$.  By Skorohod's representation theorem, we can couple the maps $(M^n,e_0^n)$ such that they converge a.s.\ to $(M,e_0)$.  Let $w^n$ be a uniform vertex of $M^n$. Choose $n$ large enough that the balls $B_N^M(v_0)$ and $B_N^{M^n}(v_0^n)$  are isomorphic. By Lemma~\ref{lem-coupling-benjamini-schramm} and the definitions of LERW and DLA in terms of harmonic measure, we can choose $n$ large enough that  the following two pairs of random growths in $M$ and $M^n$ agree with probability at least $1-\ep$:
\begin{itemize}
\item the first $m$ steps of a DLA process started at the vertex $v_0$ and targeted at infinity on $M$, and the first $m$ steps of DLA started at the vertex $v_0^n$ and targeted at $w^n$ on $M^n$;
\item the first $m$ steps of a LERW started at the vertex $v_0$ and targeted at infinity on $M$, and the first $m$ steps of LERW started at the vertex $v_0^n$ and targeted at $w^n$ on $M^n$.
\end{itemize}
Note that, as in the statement of Lemma~\ref{lem-coupling-benjamini-schramm}, we are comparing the clusters in $M$ and $M^n$ under the isomorphism $B_N^M(v_0) \equiv B_N^{M^n}(v_0^n)$.

By applying Lemma~\ref{lem-bijection-cutting} to the LERW and DLA on $M^n$, and then transferring the result to $M$, we deduce that the laws of the following two random planar maps are at most $\ep$-apart in total variation:
\begin{itemize}
\item
the distance-$(N-m)$-neighborhood of the boundary in the infinite planar map with boundary that we obtain by cutting along the edges of the first $m$ steps of DLA on $M$;
\item
the analogous map for the first $m$ steps of loop-erased random walk on $M$.
\end{itemize}
The result follows by taking $\ep$ arbitrarily small and $N$ arbitrarily large.
\end{proof}

\section{Proof of main results conditional on a relationship between exponents}
\label{sec-proof-conditional}

Both the Mullin bijection and the connection between LERW and DLA on the UITM in Lemma~\ref{complementinfinite} are special properties of the UITM.  The second property allows us to prove the growth exponent for DLA in this random environment by first proving the growth exponent for LERW and then transferring the result to DLA. The key step in our analysis of LERW on the UITM is a relationship between two exponents associated with distances in the UITM that we describe in Theorem~\ref{thm-chi} below. 

We first describe the two exponents that we will need to compare.  The first is the metric ball volume growth exponent $d$ (Definition~\ref{def-d}).  To define the second exponent, we let $(M, T,e_0)$ be the UITM with its distinguished spanning tree and consider the finite submap $M|_{[0,n]} \subset M$ for $n\in\BB N$, using the notation we defined in Section~\ref{sec-mullin}.
It is proven in~\cite[Theorem 1.7]{ghs-map-dist} (building on~\cite[Theorem 1.12]{ghs-dist-exponent}) that there is an exponent $\chi > 0$ such that for each $\delta \in (0,1)$, 
\eqb \label{eqn-dist-upper0}
\BB P\left[ \op{diam} \left( M|_{[0,n]} \right) \leq  n^{\chi + \delta} \right] \geq 1  - O_n(n^{-p}), \quad \forall p > 0 
\eqe 
and 
\eqb \label{eqn-dist-lower0}
\BB P\left[ \op{diam} \left( M|_{[0,n]}  \right)      \geq   n^{\chi - \delta}  \right] \geq n^{-o_n(1)} .
\eqe  
We emphasize that~\eqref{eqn-dist-upper0} and~\eqref{eqn-dist-lower0} concern the diameter of $M|_{[0,n]}$ w.r.t.\ its \emph{internal} graph distance, \emph{not} w.r.t.\ the ambient graph distance on $M$. We also note that the lower bound~\eqref{eqn-dist-lower0} does not show that the event in question holds with probability tending to 1 as $n\rta\infty$, only with probability decaying slower than any negative power of $n$. We will eventually show (see Theorem~\ref{thm-chi} just below) that the $n^{-o_n(1)}$ can be replaced by $1 - O_n(n^{-p})$ for any $p > 0$. 

The exponent $\chi$ describes the diameter of a submap of $M$ with approximately $n$ vertices, so it is natural to expect that $\chi = 1/d$ (see also~\cite[Conjecture 1.13]{ghs-dist-exponent}). The main step in the proofs of Theorems~\ref{main3},~\ref{thm-lerw}, and~\ref{thm-finite-map} is showing that this is indeed the case.

\begin{thm} \label{thm-chi}
One has $\chi = 1/d$. In fact, for each $\delta \in (0,1)$ it holds with superpolynomially high probability as $n\rta\infty$ that 
\eqb \label{eqn-chi}
n^{1/d -\delta} \leq \op{diam} \left( M|_{[0,n]} \right) \leq  n^{1/d + \delta} .
\eqe
\end{thm}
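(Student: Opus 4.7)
The plan is to prove the two inequalities in~\eqref{eqn-chi} separately; combined with~\eqref{eqn-dist-upper0}--\eqref{eqn-dist-lower0}, they immediately force $\chi = 1/d$. Both bounds will be proved by first establishing the corresponding estimates in the mated-CRT map $\mcl G$, where the mating-of-trees correspondence makes SLE$_8$/LQG tools available, and then transferring to the UITWM via the strong coupling of~\cite{ghs-map-dist} between the encoding walks of $M$ and $\mcl G$.

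\textbf{Lower bound.} For $\op{diam}(M|_{[0,n]}) \geq n^{1/d - \delta}$, I would use a simple volume argument. Since $v_0 \in M|_{[0,n]}$ and the internal distance in $M|_{[0,n]}$ dominates the ambient $M$-distance, the whole submap sits inside the ambient ball $B_D^M(v_0)$, where $D := \op{diam}(M|_{[0,n]})$. A short combinatorial estimate via the Mullin bijection (controlling how often the peano curve $\lambda$ can re-visit a single $M$-vertex using exponential tails on UITWM vertex degrees, or alternatively comparing to the mated-CRT map, which has exactly $n+1$ vertices in its $[0,n]$-restriction) shows that $M|_{[0,n]}$ contains at least $n^{1-\delta/10}$ distinct $M$-vertices with superpolynomially high probability. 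On the other hand, a quantitative form of the volume growth~\eqref{d} of the type $\#B_r^M(v_0) \leq r^{d + \delta/10}$ with superpolynomially high probability, obtained by pushing $\sqrt 2$-LQG area estimates for the quantum cone through the strong coupling, then forces $D \geq n^{1/d - \delta}$.

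\textbf{Upper bound (the main obstacle).} For $\op{diam}(M|_{[0,n]}) \leq n^{1/d+\delta}$, which is the technical heart of Sections~\ref{outline}--\ref{proofpart2}, I would work in the mated-CRT map. Under the mating-of-trees correspondence, the restriction $\mcl G|_{[0,n]}$ is the adjacency graph of the LQG cells $\eta([x-1,x])$ for $x \in [0,n]_{\BB Z}$, which tile the SLE$_8$ segment $\eta([0,n])$; this segment has LQG mass exactly $n$. By the $\sqrt 2$-LQG metric estimates of~\cite{gm-uniqueness}, together with the identification of $d$ as the Hausdorff dimension of this metric from~\cite{gp-kpz}, the LQG diameter of $\eta([0,n])$ is at most $n^{1/d + \delta/4}$ with superpolynomially high probability. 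The delicate step is to upgrade this continuum diameter bound into a bound on the \emph{internal} graph-distance diameter of $\mcl G|_{[0,n]}$: a short LQG geodesic between two points of $\eta([0,n])$ may in principle exit the region and return, so one must produce, for any two prescribed cells of $\mcl G|_{[0,n]}$, a chain of at most $n^{1/d + \delta/2}$ cells that stays entirely inside $\eta([0,n])$. Controlling this ``internal'' geometry of SLE$_8$-traced regions under $\sqrt 2$-LQG is precisely what the remaining sections accomplish, using the structure of SLE$_8$ crossings and Markov-type arguments for LQG balls. Once the bound is established in $\mcl G$, the strong coupling of~\cite{ghs-map-dist} transfers it to $M|_{[0,n]}$ at a polynomial loss in $\delta$, completing the proof.
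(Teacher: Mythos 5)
There are genuine gaps in both halves of your argument.

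For the lower bound, your volume argument requires a quantitative version of~\eqref{d} of the form $\#B_r^M(v_0) \leq r^{d+\delta/10}$ holding with \emph{superpolynomially} high probability. But~\eqref{d} is only an almost sure limit and carries no rate, and no superpolynomial concentration of this kind can be extracted by ``pushing $\sqrt 2$-LQG area estimates through the coupling'': the $\sqrt 2$-LQG mass of a fixed region has only finitely many finite moments, so its upper tail (and hence the upper tail of ball volumes in the coupled maps) decays only polynomially. Polynomially-high-probability volume bounds are what the literature provides, and they are not enough to give the superpolynomial rate demanded by~\eqref{eqn-chi}. The paper's proof avoids needing any high-probability volume estimate: it uses the positive-probability interior distance lower bound of \cite[Proposition 4.6]{dg-lqg-dim} (moved off the boundary via Lemma~\ref{nikodym}), applies it to the $\sim n^{\zeta}$ blocks $\mcl G^1|_{[(k-1)n^{1-\zeta},kn^{1-\zeta}]}$, which are i.i.d.\ because the encoding Brownian motion has independent increments, so the probability that every block fails is $(1-p)^{n^{\zeta}}$, i.e.\ superpolynomially small; the bound is then transferred to $M|_{[0,n]}$ via \cite[Theorem 1.5]{ghs-map-dist}. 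It is independence across blocks, not concentration of volumes, that produces the superpolynomial rate.

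For the upper bound, the step you yourself flag as delicate --- upgrading a continuum LQG-diameter bound for $\eta([0,n])$ into a bound on the number of cells in a chain staying inside $\eta([0,n])$, with superpolynomially high probability --- is not a routine upgrade; it is essentially the entire difficulty, and you leave it as a black box. (It is also unclear that the continuum diameter bound itself holds with superpolynomial rather than merely polynomial probability, for the same heavy-tail reasons as above, and relating cell counts along paths to the continuum metric at exponent precision is exactly the kind of statement proved in \cite{ghs-dist-exponent,dg-lqg-dim}, not a consequence of \cite{gm-uniqueness,gp-kpz} alone.) The paper's route requires no new work for the upper bound in~\eqref{eqn-chi}: once $\chi = 1/d$ is identified, the superpolynomial upper bound is precisely~\eqref{eqn-dist-upper0}. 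The identification $\chi = 1/d$ is obtained not through the continuum metric but by proving matching bounds for one and the same quantity, the distance from the vertex $\ep$ to $\ol{\bdy}_\ep(0,1]$ in the one-sided mated-CRT map: a lower bound with probability $\ep^{o_\ep(1)}$ (Proposition~\ref{thm1.15modified}, deduced from Lemma~\ref{thm1.15} via the Brownian meander/Bessel absolute continuity of Proposition~\ref{structure}) and an upper bound with polynomially high probability (Proposition~\ref{prop4.4modified}, deduced from Lemma~\ref{prop4.4} by a multiscale SLE/LQG argument); since the two events must overlap for small $\ep$, the exponents are forced to agree. Your proposal bypasses $\chi$, Lemma~\ref{thm1.15}, and Lemma~\ref{prop4.4} entirely, which would be legitimate if your two bounds were actually carried out, but as written the lower bound rests on an unavailable superpolynomial volume estimate and the upper bound on an unproved key step.
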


One direction of the equality $\chi = 1/d$ is obvious. 

\begin{lem}
\label{lem-obvious-direction}
The inequality $\chi \geq \frac{1}{d}$ holds.  
\end{lem}


\begin{proof}[Proof of Lemma~\ref{lem-obvious-direction}]
Let $\delta > 0$. By Definition~\ref{def-d}, a.s.\ for all $n$ sufficiently large, 
\eqb
\label{eqn-obvious-direction-step}
\frac{\log \#B_{n^{\chi + \delta}}^M(v_0)}{(\chi + \delta) \log n} \leq d+\delta
\eqe
Since $\#B_r^M(v_0) \geq n$ for $r = \op{diam} \left( M|_{[0,n]} \right)$,~\eqref{eqn-dist-upper0} implies that the left-hand side of~\eqref{eqn-obvious-direction-step} is $\geq \frac{1}{\chi + \delta}$ with probability at least $1  - O_n(n^{-p})$ for all $p > 0$.  Taking $\delta \rta 0$ proves the lemma.
\end{proof}

In the rest of this section, we will assume Theorem~\ref{thm-chi} and deduce Theorems~\ref{main3},~\ref{thm-lerw}, and~\ref{thm-finite-map}. The remaining sections of the paper will be devoted to proving Theorem~\ref{thm-chi}, using SLE/LQG techniques. 

We first prove Theorems~\ref{main3} and~\ref{thm-lerw}.  Both these theorems are consequences of the following result about LERW on the UITM. 

\begin{prop} \label{prop-lerw-complement}
Let $(M,e_0)$ be the UITM and for $m\in\BB N$, let $M^{(m)}$ be the infinite random planar map with boundary obtained by cutting along the first $m$ steps of a LERW on $M$ from $v_0$ to $\infty$. 
For each $\delta >0$, it holds with superpolynomially high probability as $m\rta \infty$ that
\eqb \label{eqn-diam-upper}
\op{diam} \left( \bdy M^{(m)} ; M^{(m)} \right) \leq m^{2/d + \delta}   . 
\eqe
Furthermore, if $\alpha \in (0,2)$ and $\delta \in (0,\alpha/100)$, then with with superpolynomially high probability as $m\rta\infty$, there are at least $m^{1 + \alpha/2 -\delta}$ vertices at graph distance $\leq m^{\alpha/d +\delta}$ from $\bdy M^{(m)}$ in $M^{(m)}$.
\end{prop}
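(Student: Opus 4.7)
The plan is to use the Mullin bijection to translate statements about the cut map $M^{(m)}$ into statements about submaps of the form $M|_{[a,b]}$, to which Theorem~\ref{thm-chi} applies directly. Under the Mullin bijection, the LERW from $v_0$ to $\infty$ corresponds to the sequence of times $J_k := \inf\{j \geq 0 : \mcl L_j = -k\}$ at which the left-coordinate walk $\mcl L$ attains new running minima, so that the $k$-th LERW vertex is traced by the Peano curve at time $J_k$; in particular the first $m$ LERW vertices all lie in $M|_{[0, J_m]}$. Since $\mcl L$ is a simple random walk on $\BB Z$, one has $\BB P[J_m \leq m^{2+\epsilon}] \to 1$ as $m \to \infty$ for any $\epsilon > 0$, with a rate I will need to carry alongside the Theorem~\ref{thm-chi} estimate.

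For the upper bound in Part 1, I would apply Theorem~\ref{thm-chi} with $n = m^{2+\epsilon_1}$ for a small $\epsilon_1 = \epsilon_1(\delta, d)$, obtaining $\op{diam}(M|_{[0,m^{2+\epsilon_1}]}) \leq m^{(2+\epsilon_1)(1/d + \epsilon_1)} \leq m^{2/d + \delta/2}$ with superpolynomially high probability. Combined with the estimate on $J_m$, this says that any two LERW vertices lie at $M$-graph distance at most $m^{2/d + \delta/2}$. To convert this into a bound on $M^{(m)}$-graph distance, I would take an $M$-geodesic between two boundary preimages and, using planarity, reroute each subsegment that crosses the LERW along a short piece of $\bdy M^{(m)}$, arguing that the detour cost is of order $m^{o(1)}$ and can be absorbed into the $m^{\delta/2}$ slack.

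For Part 2, the plan is a multi-scale covering argument. I would partition the first $m$ LERW edges into roughly $m^{1 - \alpha/2}$ consecutive blocks of length $m^{\alpha/2}$; the $i$-th block corresponds to a Mullin interval $[a_i, b_i]$ of length $\asymp m^{\alpha}$ with high probability. Theorem~\ref{thm-chi} at scale $m^{\alpha}$ then implies that $M|_{[a_i, b_i]}$ has $\asymp m^{\alpha}$ vertices and internal diameter at most $m^{\alpha/d + \delta/2}$, so every vertex in this submap lies within $M^{(m)}$-graph distance $m^{\alpha/d + \delta}$ of the corresponding boundary arc, hence of $\bdy M^{(m)}$. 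Since the Mullin intervals for distinct blocks are disjoint and vertices of $M$ have $m^{o(1)}$ multiplicity across them (e.g.\ using the exponential tail of degrees in the spanning-tree-weighted map), summing across blocks yields $\gtrsim m^{1-\alpha/2} \cdot m^{\alpha} = m^{1 + \alpha/2}$ distinct vertices in the neighborhood.

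The main obstacle I anticipate is the cutting step in Part 1: cutting along the LERW can in principle inflate boundary distances substantially, and making rigorous the ``reroute along the boundary'' heuristic requires careful use of planarity near the cut, together with control on how often an $M$-geodesic between two LERW vertices crosses the LERW. A secondary technical difficulty is that the $t^{-1/2}$ tail of the one-step hitting time for $\mcl L$ only gives polynomial concentration of $J_m$ around $m^2$; I plan to absorb this by taking $\epsilon_1$ sufficiently small so that the superpolynomial bound from Theorem~\ref{thm-chi} at scale $m^{2+\epsilon_1}$ still produces a diameter bound comfortably below $m^{2/d+\delta}$, and by a dyadic union bound over intermediate scales so that even on the rare event $\{J_m > m^{2+\epsilon_1}\}$ one of the larger scales of Theorem~\ref{thm-chi} continues to furnish the required bound.
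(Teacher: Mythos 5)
The central problem is in your treatment of Part 1, and it is fatal to the superpolynomial probability the proposition demands. The hitting time $J_m$ of $-m$ by the relevant coordinate of the Mullin encoding walk has a heavy tail: $\BB P[J_m > m^{2+s}]$ decays like $m^{-s/2}$, which is polynomially but not superpolynomially small. Your proposed rescue by a dyadic union over larger scales does not work, because on the event $\{J_m > m^{2+s}\}$ with $s$ of order $\delta d$ the only bound your method supplies for $\op{diam}(\bdy M^{(m)};M^{(m)})$ is $\op{diam}(M|_{[0,J_m]})$, which is genuinely of order $J_m^{1/d} > m^{2/d+\delta}$ on that event; the larger scales of Theorem~\ref{thm-chi} therefore furnish only bounds exceeding the target, and your failure probability remains polynomial. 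The paper's proof is built precisely to avoid this: it covers the LERW times $t_1,\dots,t_m$ by intervals $[\tau_k,\tau_k+m^2]$ of \emph{deterministic} length $m^2$, where $\tau_{k+1}$ is the first running-minimum time of the walk at or after $\tau_k+m^2$ (so no LERW edge is traced in the gaps between intervals), shows the number $K$ of intervals needed is stochastically dominated by a geometric random variable (hence $K\le m^{\delta/2}$ with superpolynomially high probability), and applies Theorem~\ref{thm-chi} at the fixed scale $m^2$ to each interval; the resulting bound $K\cdot m^{2/d+\delta/2}$ is insensitive to how long the walk takes to reach $-m$.

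A second genuine gap is your conversion of $M$-distances between LERW vertices into $M^{(m)}$-distances by rerouting geodesics along the cut. Cutting only increases distances, and the cost of a detour along $\bdy M^{(m)}$ \emph{inside} $M^{(m)}$ is exactly a piece of the quantity being bounded, so the asserted $m^{o(1)}$ detour cost is circular and has no justification. The paper never bounds ambient $M$-distances at all: it uses the \emph{internal} diameters of the submaps $M|_{[\tau_k,\tau_k+m^2]}$, whose union contains one of the two boundary copies of each LERW edge (the positive-time side of the cut), so internal paths in these submaps bound $M^{(m)}$-distances directly; the other half of $\bdy M^{(m)}$ is handled by the symmetric argument on the other side, and the two halves are joined by the triangle inequality. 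Your Part 2 is closer in spirit to the paper's, but it inherits the same heavy-tail issue: the Mullin interval corresponding to a block of $m^{\alpha/2}$ LERW edges has length comparable to $m^\alpha$ only with polynomially high probability, which you do not address (it could be patched by discarding bad blocks, using independence across blocks). The paper instead again fixes intervals $[\tau_k,\tau_k+m^\alpha]$ of deterministic length anchored at running-minimum times, shows with superpolynomially high probability that at least $m^{1-(\alpha+\delta)/2}$ of them occur before $t_m$, and counts at least $m^{\alpha-\delta/2}$ vertices per interval via the up-steps of the walk, subtracting the at most $m^{\alpha/2+\delta/2}$ boundary vertices rather than invoking degree bounds to control multiplicity.
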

\begin{proof}
Let $T$ be a uniform spanning tree on $M$ (i.e., the conditional law of $T$ given $M$ is that of the UST on $M$) and let $\mcl Z  = (\mcl L , \mcl R ) : \BB Z\rta \BB Z^2$ be the associated encoding walk for $(M, T , e_0)$ under the Mullin bijection. 
Also let $m\in\BB N$ and let $\op{LERW}_m$ denote the subgraph of $M$ consisting of the first $m$ edges of the branch of $T$ from $v_0$ to $\infty$ together with their endpoints, so that $\op{LERW}_m$ has the law of the first $m$ steps of a LERW from $v_0$ to $\infty$. 
Let $t_1,\ldots,t_m$ be the positive times  which correspond under the Mullin bijection to triangles of $Q\cup Q_* \cup T$ which have an edge in $\op{LERW}_m$. 
Equivalently, $t_j$ for $j=1,\dots,m$ is the smallest $t\geq 0$ for which $\mcl L_t = -j$.  

We first prove~\eqref{eqn-diam-upper}.
The key idea is to consider a collection of time intervals of length $m^{2}$  such that the submaps of $M$ encoded by  $\mcl Z$ in these time intervals together cover $\op{LERW}_m$.  We then apply Theorem~\ref{thm-chi} to each such interval. The collection of intervals is given by
\[
[\tau_k,\tau_k+m^{2}] , \qquad k \in [0,K-1]_{\BB Z}
\]
where 
\begin{itemize}
\item the sequence of stopping times $\tau_0,\tau_1,\ldots$ is defined inductively by setting $\tau_0 = 0$ and setting $\tau_{k+1}$ equal to the first time at or after $\tau_k + m^{2}$ at which $\mcl L$ achieves a running minimum; and
\item $K$ is equal to the smallest positive integer satisfying $\tau_K > t_m$.
\end{itemize}

We claim that the number of intervals $K$ is stochastically dominated by a geometric random variable with mean bounded above by a universal constant. To see why this is true, observe that 
\eqbn
\BB P\left[ \tau_k > t_m  |  \tau_{k-1} < t_m \right] 
\geq \BB P\left[ \min_{t \in [\tau_{k-1} +1  , \tau_{k-1} + m^2]_{\BB Z} } \mcl L_t \leq -m \right] 
\geq \BB P\left[ \min_{t  \in [1,m^2]_{\BB Z} } \mcl L_t \leq - m \right] 
\eqen
which is bounded below by a universal constant by the convergence of simple random walk to Brownian motion. In particular, for each $\delta >0$ it holds with superpolynomially high probability as $m\rta \infty$ that $K \leq m^{\delta/2}$.   

By Theorem~\ref{thm-chi} (applied with $n = m^2$) and a union bound, it holds with superpolynomially high probability as $m\rta\infty$ that the submaps $ M|_{[\tau_k,\tau_k+m^{2}]}$ all have internal diameter at most $m^{2/d + \delta/2}$.  Since the union of these submaps contains exactly half of the edges of $\bdy M^{(m)}$ (namely, one of the two edges corresponding to each edge of $\text{LERW}_m$), and since one can run the same argument to deal with the other half of $\bdy M^{(m)}$, the triangle inequality implies~\eqref{eqn-diam-upper}. 
 
We now prove the last part of the proposition. We consider a collection of time intervals of length $m^{\alpha}$ such that the corresponding submaps of $M$ intersect only along their boundaries and each intersects $\op{LERW}_m$. The collection is given by
\begin{equation}
[\tau_k,\tau_k+m^{\alpha}] , \qquad k \in [0,K-1]_{\BB Z}
\label{collection2}
\end{equation}
where 
\begin{itemize}
\item the sequence of stopping times $\tau_0,\tau_1,\ldots$ is defined inductively by setting $\tau_0 = 0$ and setting $\tau_{k+1}$ equal to the first time $\geq \tau_k + m^{\alpha}$ at which $\mcl L$ achieves a running minimum; and
\item 
 $K$ is the smallest positive integer satisfying $\tau_K > t_m$.
\end{itemize}

We have $\mcl L_{\tau_K} \leq -m$ (by the definition of $t_m$), so if $K < m^{1-(\alpha +\delta)/2}$, then there must be a $k\in [1,m^{1-(\alpha+\delta)/2}]_{\BB Z}$ such that $\min_{t \in [\tau_k   ,  \tau_k + m^\alpha]_{\BB Z} } (\mcl L_t - \mcl L_{\tau_k}) \leq  -m^{(\alpha + \delta)/2}$.
Hence  
\alb
&\BB P\left[ K < m^{1 - (\alpha + \delta)/2} \right]  \\
&\qquad \leq \BB P\left[ \exists k \in [0, m^{(\alpha + \delta)/2} ]_{\BB Z}  \:\: \text{s.t.} \:\: \min_{t \in [\tau_k   ,  \tau_k + m^\alpha]_{\BB Z} } (\mcl L_t - \mcl L_{\tau_k}) \leq  -m^{(\alpha + \delta)/2}  \right] 
\ale
which decays faster than any negative power of $m$ by a standard estimate for simple random walk and a union bound. 
Hence the collection of intervals~\eqref{collection2} contains at least $m^{1 - (\alpha + \delta)/2}$ intervals with superpolynomially high probability as $m \rightarrow \infty$. 
 
By Theorem~\ref{thm-chi} (and since trivially $K \leq m$), it holds with superpolynomially high probability as $m\rta\infty$ that the internal diameter of each $M|_{[\tau_k , \tau_k+m^\alpha]}$ for $k = 0,\dots , K-1$ is at most $m^{\alpha/d + \delta}$. On the event that these diameter upper bounds hold, the submaps' vertices are all at distance $\leq m^{\alpha/d + \delta}$ from $\op{LERW}_m$.  Since the sets of vertices in $M|_{[\tau_k , \tau_k+m^\alpha]} \setminus \bdy  M|_{[\tau_k , \tau_k+m^\alpha]}$ are all disjoint, we deduce that, with superpolynomially high probability as $m\rta\infty$, the number of vertices at distance $\leq m^{\alpha/d +\delta}$ from $\bdy M^{(m)}$ in $M^{(m)}$ is at least 
\eqb \label{eqn-vertex-sum}
\sum_{k=0}^{\lfloor m^{1- (\alpha + \delta) /2} \rfloor } \#\mcl V\left( M|_{[\tau_k , \tau_k+m^\alpha]} \setminus \bdy  M|_{[\tau_k , \tau_k+m^\alpha]} \right) .
\eqe 
We will now argue that with superpolynomially high probability as $m\rta\infty$, we have 
\eqb \label{eqn-vertex-count}
\#\mcl V\left( M|_{[\tau_k , \tau_k+m^\alpha]} \setminus \bdy  M|_{[\tau_k , \tau_k+m^\alpha]} \right) \geq m^{\alpha -\delta/2}, \quad\forall k \in [0,K-1]_{\BB Z} .
\eqe 
To show~\eqref{eqn-vertex-count}, we prove two types of estimates: a probabilistic lower bound for the number of vertices in each $M|_{[\tau_k , \tau_k+m^\alpha]}$, and a probabilistic upper bound for the number of vertices in each of the boundaries $\bdy M|_{[\tau_k , \tau_k+m^\alpha]}$.
\begin{itemize}
\item
Recall the space-filling curve $\lambda$ from the Mullin bijection. Whenever $\mcl L$ increases, $\lambda$ traces a triangle of $Q\cup T\cup T_*$ which includes a vertex of $M$ which is not part of any of the previously traced triangles. Therefore, we can bound the number of vertices in $M|_{[\tau_k , \tau_k+m^\alpha]}$ from below by the number of times $\mcl L$ increases in the corresponding time interval.   Since $\mcl Z$ is a simple random walk on $\BB Z^2$, the events that $\mcl L$ increases at each of the times in $[\tau_k , \tau_k+m^\alpha]$ are independent with probability $1/4$.  Thus, by applying Hoeffding's equality applied to $\lfloor m^\alpha \rfloor$ i.i.d.\ Bernoulli random variables with parameter $1/4$, we find that with superpolynomially high probability as $m\rta\infty$, each  $M|_{[\tau_k , \tau_k+m^\alpha]}$ has at least $m^{\alpha - \delta/2}$ vertices.
\item
 On the other hand, we can bound from above the number of vertices $\bdy M|_{[\tau_k , \tau_k+m^\alpha]}$. 
To do so, we examine what it means for $v$ to be on the boundary of the submap $M|_{[\tau_k , \tau_k+m^\alpha]}$.  We have $v \in \bdy M|_{[\tau_k , \tau_k+m^\alpha]}$ if $v$ is part of a triangle that the curve $\lambda$ traces in the time interval $[\tau_k , \tau_k+m^\alpha]$, \emph{and} part of a triangle traced at a time \emph{not} in $[\tau_k , \tau_k+m^\alpha]$. Equivalently, $v$ is on the (unique) path in $T$ consisting of edges traced exactly once in the interval $[\tau_k , \tau_k+m^\alpha]$.  The length of this path is exactly the displacement of the walk $\mcl L$ in the time interval $[\tau_k , \tau_k+m^\alpha]$.  By a basic tail estimate for simple random walk, with superpolynomially high probability as $m\rta\infty$, each $\bdy M|_{[\tau_k , \tau_k+m^\alpha]}$ has at most $m^{\alpha/2 + \delta/2}$ vertices. 
\end{itemize}
Combining these two bounds yields~\eqref{eqn-vertex-count}. Plugging~\eqref{eqn-vertex-count} into~\eqref{eqn-vertex-sum} concludes the proof. 
\end{proof}

We are now ready to prove Theorems~\ref{main3} and~\ref{thm-lerw}.
\begin{proof}[Proof of Theorems~\ref{main3} and~\ref{thm-lerw} assuming Theorem~\ref{thm-chi}]
We will prove Theorem~\ref{thm-lerw}, which gives the growth exponent for LERW; Theorem~\ref{main3} then follows by the connection we established between DLA and LERW on the UITM in Lemma~\ref{complementinfinite}.
For $m\in\BB N$, let $M^{(m)}$ be the infinite planar map with boundary of length $2m$ obtained by cutting along the edges of the LERW $\op{LERW}_m$.  By Proposition~\ref{prop-lerw-complement}, 
 for each fixed $\delta >0$, it holds with superpolynomially high probability as $m\rta \infty$ that $\op{diam} (\op{LERW}_m ; M) \leq m^{2/d+\delta}$. By the Borel-Cantelli lemma, this implies the upper bound in Theorem~\ref{thm-lerw}. 

By the last statement of Proposition~\ref{prop-lerw-complement} and the same argument as above, for each $\alpha \in (0,2)$ and $\delta \in (0,\alpha/100)$, it holds with superpolynomially high probability as $m\rta\infty$ that the set of vertices at $M$-graph-distance at most $m^{\alpha/d +\delta}$  from $\op{LERW}_m$ contains at least $m^{1+\alpha/2 - \delta}$ vertices. By the Borel-Cantelli lemma this is a.s.\ the case for large enough $m\in\BB N$. 

Let $r_m$ be the smallest $r > 0$ for which $\op{LERW}_m\subset B_r^M(v_0)$. Note that $r_m \rta\infty$ as $m\rta\infty$. 
By Definition~\ref{def-d}, a.s.\ $\# B^M_r(v_0) \leq r^{d + o_r(1)}$. Furthermore, the $m^{\alpha+\delta}$-neighborhood of $\op{LERW}_m$ is contained in $B_{r_m + m^{\alpha/d+\delta}}^M(v_0)$, so a.s.\ for large enough $m$, 
\eqb
m^{1+\alpha/2-\delta} 
\leq \# B_{r_m + m^{\alpha/d+\delta}}^M(v_0)
\leq \left( r_m + m^{\alpha/d+\delta}   \right)^{d + o_m(1)} .
\eqe
By re-arranging and sending $\delta \rta 0$, we get that a.s.\ 
\eqbn
r_m \geq m^{1/d + \alpha/(2d)   - o_m(1)} - m^{\alpha/d + o_m(1) } \geq m^{1/d + \alpha/(2d)   - o_m(1)}
\eqen
where in the last line we use that $1/d + \alpha/(2d)  > \alpha/d$ due to the fact that $\alpha < 2$. Sending $\alpha \rta 2$ now gives the lower bound in Theorem~\ref{thm-lerw}. 
\end{proof}

Finally, we prove our result for the diameters of finite random planar maps.

\begin{proof}[Proof of Theorem~\ref{thm-finite-map} assuming Theorem~\ref{thm-chi}]
Let $\mcl Z = (\mcl L , \mcl R)$ be the simple random walk on $\BB Z^2$ which encodes $(M, T , e_0)$ via the Mullin bijection for finite spanning-tree-weighted random planar maps possibly with boundary. 
If $M_n$ and $\ell$ are as in the theorem statement (we take $\ell = 0$ if $M_n$ has no boundary), then, as we described in Section~\ref{sec-mullin}, the Mullin bijection shows that the law of $M_n$ is the same as the law of $M|_{[0,2n]}$ conditioned on the event that $\mcl L_t , \mcl R_t \geq 0$ for each $t = 0,\dots, 2n$ and $\mcl Z_n = (\ell , 0)$. 
By applying a local central limit theorem to $Z_n/\sqrt{n}$, we see that probability of the event $\mcl Z_n = (\ell , 0)$ \emph{without} conditioning on $\mcl L_t , \mcl R_t \geq 0$  is at least $n^{-1/2}$ times some constant that is uniform in $\ell \in [0,n^{1/2}]_{\BB Z}$.  
By the reflection principle for simple random walk, the probability that both $\mcl Z_n = (\ell , 0)$ and $\mcl L_t , \mcl R_t \geq 0$ for each $t = 0,\dots, 2n$  is bounded from below, uniformly in $\ell \in [0,n^{1/2}]_{\BB Z}$, by $C n^{-3/2}$ for some universal constant $C$. 
Combining this with Theorem~\ref{thm-chi} gives the desired result. 
\end{proof}

\section{Liouville quantum gravity and the one-sided mated-CRT map}
\label{LQGreview}

The rest of the paper is devoted to the proof of Theorem~\ref{thm-chi}.   To prove the theorem, we will analyze another random planar map called the one-sided mated-CRT map which is directly connected to LQG and SLE.  The exponents $\chi$ and $d$ also describe distances in this map, so we will prove Theorem~\ref{thm-chi} by proving appropriate upper and lower bounds on these distances.  This section is devoted to defining the one-sided mated-CRT map, and to reviewing the  LQG and SLE theory needed to formulate this definition.

First, we will define LQG surfaces in general in Section~\ref{sec-lqg-def}; then, in Section~\ref{sec-wedge-def} we will define the LQG surface that we will use in the definition of the one-sided mated-CRT map.  In Section~\ref{sec-sle-def}, we recall the definitions of two types of SLE processes and their encodings in terms of Gaussian free fields given by the ``imaginary geometry'' machinery from~\cite{ig4}. For the reader unfamiliar with imaginary geometry, we will introduce the minimum amount of tools from this theory that we need to describe this encoding and prove our desired results. Finally,  we define the one-sided mated-CRT map in Section~\ref{sec-mated-crt}, and we develop tools for studying distances in this map in Section~\ref{sec-metric-def}.

\subsection{Liouville quantum gravity (LQG) surfaces}
 \label{sec-lqg-def}

Let $\gamma \in (0,2)$, let $\mcl D \subset \BB C$, and let $h$ be a variant of the Gaussian free field (GFF) on $\mcl D$ (see~\cite{shef-gff,ss-contour,ig1,ig4} for more on the GFF). 
Heuristically speaking, a $\gamma$-LQG surface is the random surface parametrized by $\mcl D$ with Riemannian metric tensor $e^{\gamma h} \, (dx^2 + dy^2)$, where $dx^2  + dy^2$ is the Euclidean Riemannian metric tensor on $\mcl D$. Such surfaces arise as the scaling limits of random planar maps in various topologies.
In particular, the spanning-tree-weighted random planar map corresponds to $\gamma = \sqrt 2$, so, for simplicity, we will restrict ourselves to this value of $\gamma$ in our exposition of LQG in this paper.

The above definition of LQG surfaces does not make literal sense since $h$ is a distribution, not a function, so cannot be exponentiated. 
However, one can give a rigorous definition of LQG surfaces following~\cite{shef-kpz,shef-zipper,wedges}, as follows.

\begin{defn} \label{def-lqg-surface}
A \emph{$\sqrt{2}$-LQG surface} is an equivalence class of pairs $(\mcl D,h)$, where $\mcl D \subset \BB{C}$ is an open domain and $h$ is a distribution on $\mcl D$ (typically some variant of the Gaussian free field), with two pairs $(\mcl D,h)$ and $(\wt{\mcl D},\tilde{h})$ considered to be equivalent if there exists a conformal map $\psi:\wt{\mcl D} \rightarrow \mcl D$ such that $\tilde{h}=h \circ \psi + Q \log|\psi'|$ for $Q = \frac{2}{\sqrt{2}} + \frac{\sqrt{2}}{2}$.  
More generally, a \emph{$\sqrt{2}$-LQG surface with $k$ marked points} is an equivalence class of $(k+2)$-tuples $(\mcl D,h,x_1,\dots,x_k)$ with $(\mcl D,h)$ as above and $x_1,\dots,x_k \in \mcl D\cup \bdy \mcl D$, with two such $(k+2)$-tuples declared to be equivalent if there is a conformal map $\psi$ as above which also takes the marked points for one $(k+2)$-tuple to the corresponding marked points of the other. 
\end{defn}

One thinks of two equivalent pairs $(\mcl D,h)$ and $(\wt{\mcl D} , \wt h)$ as representing two different parametrizations of the same surface.
Duplantier and Sheffield~\cite{shef-kpz} constructed the volume form associated with a $\sqrt{2}$-LQG surface, which is a measure $\mu_h$ that can be defined as the limit of regularized versions of $e^{\sqrt{2} h(z)} \,dz$ (where $dz$ denotes Lebesgue measure). The volume form is well-defined on these equivalence classes: if $(\mcl D,h)$ and $(\wt{\mcl D},\tilde{h})$ are equivalent with $\psi:\wt{\mcl D} \rightarrow \mcl D$ as in Definition~\ref{def-lqg-surface}, then $\mu_h(\psi(A)) = \mu_{\wt{h}}(A)$ for each Borel subset $A$ of $\mcl D$.
 
In a similar vein, one can define the $\sqrt{2}$-LQG length measure $\nu_h$ on certain curves in $\ol{\mcl D}$, including $\bdy \mcl D$ and SLE$_{2}$-type curves (or equivalently the outer boundaries of SLE$_8$-type curves, by SLE duality~\cite{zhan-duality1,zhan-duality2,dubedat-duality,ig1,ig4}) which are independent from $h$. 
The measure $\nu_h$ is well-defined on equivalence classes in the same sense as $\mu_h$.
The $\sqrt{2}$-LQG length measure can be defined in various ways, e.g., using semi-circle averages of a GFF on a domain with smooth boundary and then confomally mapping to the complement of an SLE$_2$ curve~\cite{shef-kpz,shef-zipper} or directly as a Gaussian multiplicative chaos measure with respect to the Minkowski content measure on the SLE$_2$ curve~\cite{benoist-lqg-chaos}. 
See also~\cite{rhodes-vargas-review,berestycki-gmt-elementary,aru-gmc-survey} for expository results concerning of a more general theory of regularized measures of this form (called \emph{Gaussian multiplicative chaos}) which dates back to Kahane~\cite{kahane}.

\subsection{A key example: the $Q$-quantum wedge}
\label{sec-wedge-def}

One example of a $\sqrt{2}$-quantum surface which appears frequently in this paper is the \textit{$Q$-quantum wedge}. As in Definition~\ref{def-lqg-surface} above, $Q = \frac{2}{\sqrt{2}} + \frac{\sqrt{2}}{2}$; we will use this notation throughout the rest of the paper.   

We define the $Q$-quantum wedge in terms of its parametrization by the upper-half plane. Let  $\mcl{H}(\BB{H})$ denote the Hilbert space completion of the subspace of $C^{\infty}$ functions $f : \BB H\rta \BB R$ satisfying $\int_{\BB H}|\nabla f(z)|^2 \,dz < \infty$ with finite Dirichlet energy with respect to the Dirichlet inner product, $(f,g)_\nabla = \int_{\BB H} \nabla f(z) \cdot\nabla g(z)\,dz$. 
Since the Dirichlet norm of a constant function is zero, we view functions in $\mcl H(\BB H)$ as being defined modulo global additive constant.
We recall that the free-boundary GFF is the standard Gaussian random variable on $\mcl H(\BB H)$; see~\cite{shef-zipper}.

 We can decompose the space $\mcl{H}(\BB{H})$  into the orthogonal sum $\mcl{H}_1(\BB{H}) \oplus \mcl{H}_2(\BB{H})$, where $\mcl{H}_1(\BB{H})$ is the subspace of $\mcl{H}(\BB{H})$ consisting of functions that are constant on the semicircles $\bdy B_{e^{-t}}(0) \cap \BB{H}$ for all $t \in \BB{R}$, and $\mcl{H}_2(\BB{H})$ is the subspace of $\mcl{H}(\BB{H})$ consisting of functions that have mean zero on these semicircles. The following definition is taken from~\cite{wedges} (note that we are giving the definition in the critical case $\alpha =Q$, which differs from the usual case $\alpha  <Q$ considered in~\cite{wedges}).

\begin{defn}
The $Q$-quantum wedge is the doubly marked $\sqrt{2}$-LQG surface $(\BB H , h , 0, \infty)$ where $h$ is the distribution on $\BB H$ whose projections $h_1$ and $h_2$ onto  $\mcl{H}_1(\BB H)$ and $\mcl{H}_2(\BB H)$, respectively, can be described as follows:
\begin{itemize}
\item
$h_1$ is the function in $\mcl{H}_1(\BB{H})$ whose common value on $\bdy B_{e^{-t}}(0) \cap \BB{H}$ is equal to $B_{-2t} + Q t$ for $t<0$ and to $-X_{2t} + Q t$ for $t>0$, where $X_t$ is a dimension 3 Bessel process started at the origin and $B_{t}$ is a standard linear Brownian motion independent of $X_t$.
\item
$h_2$ is independent from $h_1$ and has the law of the projection of a free boundary GFF on $\BB{H}$ onto $\mcl{H}_2(\BB H)$. 
\end{itemize}
\label{wedgedef}
\end{defn}

Since the quantum wedge has only two marked points, one can obtain a different equivalence class representative of the quantum surface $(\BB H , h , 0, \infty)$ by replacing $h$ by $h(C\cdot) + Q\log C$ for $C>0$. The particular choice of $h$ appearing in Definition~\ref{wedgedef} is called the \emph{circle average embedding} of $h$ and is determined by the condition that $1 = \inf\{r > 0 : h_r(0) + Q\log r = 0\}$, where $h_r(0)$ is the average of $h$ over $\bdy B_r(0) \cap \BB H$.

\subsection{Whole-plane SLE$_8$ from $\infty$ to $\infty$ and its relation to chordal SLE$_8$}
\label{sec-sle-def}

The description of the one-sided mated-CRT map (given in the next subsection) involves both a $\sqrt{2}$-LQG surface and an independent SLE$_8$.   The two variants of SLE$_8$ that we will consider in this paper are chordal SLE$_8$ and whole-plane SLE$_8$ from $\infty$ to $\infty$.  The latter is just a two-sided version of chordal SLE$_8$, and is defined, e.g., in the discussion and footnote before \cite[Theorem 1.9]{wedges}. One way to construct a whole-plane SLE$_8$ from $\infty$ to $\infty$ is as follows:
\begin{enumerate}
\item First sample a whole-plane SLE$_2$ curve $\eta^L$ from 0 to $\infty$.
\item Conditional on $\eta^L$, sample a chordal SLE$_2(-1 ; -1)$ curve $\eta^R$ from 0 to $\infty$ in $\BB C\setminus \eta^R$ with force points immediately to the left and right of its starting point (see~\cite{ig1} for basic properties of chordal SLE$_\kappa(\rho^L;\rho^R)$ curves). 
\item Conditional on $\eta^L$ and $\eta^R$, sample a chordal SLE$_8$ from $\infty$ to $0$ in the connected component of $\BB C\setminus (\eta^L \cup \eta^R)$ lying to the left of $\eta^L$ and a chordal SLE$_8$ from 0 to $\infty$ in the connected component of $\BB C\setminus (\eta^L\cup \eta^R)$ lying to the right of $\eta^R$. Then concatenate these two chordal SLE$_8$'s. 
\end{enumerate} 
By~\cite[Theorems 1.1 and 1.11]{ig4}, the curves $\eta^L$ and $\eta^R$ can equivalently be described as the flow lines of a whole-plane GFF started from 0 with angles $\pi/2$ and $-\pi/2$, respectively. In particular, the joint law of $(\eta^L, \eta^R)$ is symmetric under swapping the order of the two curves.

In our proofs, we will want to transfer a quantitative probabilistic estimate for a  whole-plane SLE$_8$ from $\infty$ to $\infty$ to one for a chordal SLE$_8$.  To do this, we will use an encoding of these curves in terms of Gaussian free fields developed by Miller and Sheffield in their theory of imaginary geometry.  By~\cite[Theorem 1.6]{ig4} (see also~\cite[Theorem 1.1]{ig1}), a whole-plane SLE$_8$ from $\infty$ to $\infty$ can be constructed from a whole-plane GFF  $h^{\op{IG}}_1$.\footnote{Technically,~\cite{ig4} works with a whole-plane GFF defined modulo a global additive multiple of $2\pi\chi$, where $\chi = \sqrt 2 - 1/\sqrt 2$. In this paper, we always assume (somewhat arbitrarily) that the additive constant for our whole-plane GFF is chosen so that its circle average over $\bdy\BB D$ is zero. This particular distribution of course determines the equivalence class modulo global additive multiples of $2\pi \chi$.} 
A chordal SLE$_8$ from $0$ to $\infty$ in $\BB H$ can be constructed in the same way from a GFF $h^{\op{IG}}_2$ on $\BB H$ with boundary data $ -\pi/\sqrt 2$ (resp. $\pi/\sqrt 2$) on the negative (resp. positive) real axis. Moreover, both of these variants of SLE$_8$ are a.s. locally determined by the associated imaginary geometry field in the following precise sense. 

\begin{lem}
\label{lem-gms-harmonic-2.4}
Suppose that $\eta$ is a whole-plane SLE$_8$ from $\infty$ to $\infty$ parametrized by Lebesgue measure, with $h^{\op{IG}}_1$ the associated imaginary geometry field defined above.  Let $U \subset \BB C$ be an open set, and for $z$ in $U \cap \BB{Q}^2$, let
\[
T_z = \text{last time $\eta$ enters $U$ before hitting $z$}
\]
and
\[
S_z = \text{first time $\eta$ exits $U$ after hitting $z$}
\]
Then, for each $\delta > 0$, the restriction of  $h^{\op{IG}}_1$ to the $\delta$-Euclidean neighborhood of $U$ almost surely determines the collection of curve segments
\eqb
(\eta( \cdot + T_z)|_{[0,S_z - T_z]})_{z \in U \cap \BB{Q}^2}.
\label{eqn-curve-segments}
\eqe
The same holds for $\eta$ a chordal SLE$_8$ from $0$ to $\infty$ for a subset $U \subset \BB{H}$, with  $h^{\op{IG}}_1$ replaced by  $h^{\op{IG}}_2$.
\end{lem}

\begin{proof}
For $\eta$ a whole-plane SLE$_8$ from $\infty$ to $\infty$, this is~\cite[Lemma 2.4]{gms-harmonic}.  The proof is easily adapted to the chordal case.
\end{proof}

Lemma~\ref{lem-gms-harmonic-2.4} is the only fact from imaginary geometry which we will need for this paper. 


We can use Lemma~\ref{lem-gms-harmonic-2.4} to transfer quantitative probabilistic estimates for whole-plane SLE$_8$ from $\infty$ to $\infty$ to analogous estimates for chordal SLE$_8$. All we need to make this work is a quantitative Radon-Nikodym derivative estimate comparing the corresponding imaginary geometry fields:

\begin{lem}
Suppose $U$ is an open set whose closure is contained in $\BB{H}$. Define the fields $h^{\op{IG}}_1$ and $h^{\op{IG}}_2$ as above.  Then, the law of $ h^{\op{IG}}_2|_{\overline{U}}$  is absolutely continuous w.r.t.\ the law of $ h^{\op{IG}}_1|_{\overline{U}}$, with the Radon-Nikodym derivative having a finite $q$-th moment for some $q>1$.  
\label{lem-nikodym}
\end{lem}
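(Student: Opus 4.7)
The plan is to reduce to a common Gaussian reference measure via the Markov property and the Cameron-Martin theorem. Fix a smooth bounded domain $V$ with $\ol{U} \subset V$ and $\ol{V} \subset \BB H$. By the Markov property applied to each field, decompose
\[
h^{\op{IG}}_i\big|_V = \mathring{h}_i + \mathfrak{h}_i, \qquad i = 1, 2,
\]
where $\mathring{h}_i$ is a zero-boundary GFF on $V$ and $\mathfrak{h}_i$ is an independent random function harmonic on $V$ (for $h^{\op{IG}}_2$, the harmonic part also absorbs the deterministic boundary data $\psi|_V$). Crucially, $\mathring{h}_1$ and $\mathring{h}_2$ have the same marginal law, which will serve as our common reference.

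Choose a smooth cutoff $\rho : V \to [0,1]$ with $\rho \equiv 1$ on $\ol{U}$ and $\op{supp}\rho \subset V$, so that $\rho\mathfrak{h}_i \in H^1_0(V)$ and $h^{\op{IG}}_i|_{\ol{U}} = (\mathring{h}_i + \rho\mathfrak{h}_i)|_{\ol{U}}$. Conditional on $\mathfrak{h}_i$, the Cameron-Martin theorem for the zero-boundary GFF gives that the law of $\mathring{h}_i + \rho\mathfrak{h}_i$ is absolutely continuous w.r.t.\ the law of $\mathring{h}_i$, with density $Y_i = \exp\!\left(\langle \mathring{h}_i, \rho\mathfrak{h}_i\rangle_\nabla - \tfrac{1}{2}\|\rho\mathfrak{h}_i\|_\nabla^2\right)$. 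Integrating over $\mathfrak{h}_i$ and restricting to $\ol{U}$, I conclude that $P_i := \op{Law}(h^{\op{IG}}_i|_{\ol{U}})$ is absolutely continuous with respect to the common reference $P_0 := \op{Law}(\mathring{h}|_{\ol{U}})$, with Radon-Nikodym derivative $R_i = \BB E[Y_i \mid \mathring{h}|_{\ol{U}}]$.

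To bound the $L^q$ moments of $R_i$ and $R_i^{-1}$, Jensen's inequality and a standard Gaussian computation yield
\[
\BB E[R_i^q] \leq \BB E[Y_i^q] = \BB E\!\left[\exp\!\Big(\tfrac{q^2-q}{2}\|\rho\mathfrak{h}_i\|_\nabla^2\Big)\right], \qquad \BB E_{P_0}[R_i^{-s}] \leq \BB E\!\left[\exp\!\Big(\tfrac{s^2+s}{2}\|\rho\mathfrak{h}_i\|_\nabla^2\Big)\right].
\]
Since $\mathfrak{h}_i$ is a Gaussian harmonic function on $V$ and $\op{supp}\rho$ is compactly contained in $V$, standard regularity estimates for the GFF (together with Cauchy estimates for harmonic functions to control gradients by values) give Gaussian tails for $\sup_{\op{supp}\rho}(|\mathfrak{h}_i| + |\nabla\mathfrak{h}_i|)$, and hence sub-exponential tails for $\|\rho\mathfrak{h}_i\|_\nabla^2$. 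Therefore both exponential moments above are finite for sufficiently small $q-1, s > 0$, showing $R_i \in L^{q_0}(P_0)$ and $R_i^{-1} \in L^{q_0}(P_i)$ for some $q_0 > 1$. Combining these via H\"older's inequality to control $\BB E_{P_1}[(R_2/R_1)^{q'}] = \BB E_{P_0}[R_2^{q'} R_1^{1-q'}]$ gives $dP_2/dP_1 = R_2/R_1 \in L^{q'}(P_1)$ for some $q' > 1$. The main technical ingredient is the Gaussian tail bound for the sup norm of the harmonic part of a GFF on compact subsets of its domain, which is a standard but nontrivial regularity result following from Borell-TIS concentration applied to the Gaussian field $\mathfrak{h}_i$.
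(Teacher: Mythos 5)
Your argument is correct in substance and is essentially the paper's strategy: decompose each field into a zero-boundary GFF plus a harmonic part, localize with a cutoff, express the Radon--Nikodym derivative as a conditional expectation of a Cameron--Martin density, bound its moments by a Gaussian computation reducing to exponential moments of the Dirichlet energy of the (cut-off) harmonic part, and control that via Borell--TIS. The implementation differs slightly: you compare both laws to a common reference (the zero-boundary GFF on the intermediate domain $V$, via the domain Markov property), which forces you to combine a positive moment of one density with a negative moment of the other via Jensen and H\"older; the paper instead realizes both fields over a single zero-boundary GFF on $\BB H$ (obtained by a Green's function comparison, with harmonic parts $f_1$, $f_2$) and computes one density directly, with shift $(f_1-f_2)\psi$, so only a positive moment is needed. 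Your route avoids the explicit Green's-function comparison and the slightly delicate one-shot density formula, at the cost of the extra H\"older step, which you handle correctly. One point deserves a word (and is glossed over at a comparable level in the paper's own proof): for $h^{\op{IG}}_1$, the whole-plane GFF is normalized by setting its circle average over $\bdy \BB D$ to zero, and the clean statement of the Markov property --- zero-boundary GFF on $V$ \emph{independent} of the harmonic part --- holds for the field viewed modulo additive constant; once the constant is fixed by the circle average, if $\bdy\BB D$ meets $V$ the constant couples the two pieces through a single Gaussian linear functional of the zero-boundary part. This is routine to repair (e.g.\ first decompose modulo constants and absorb the resulting random constant, which has Gaussian tails, into the shift before applying the same Cameron--Martin/H\"older estimate), but as written the asserted exact independence is not quite literal for $h^{\op{IG}}_1$, so you should add that remark.
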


\begin{proof}
Choose an open set $V$ such that $\overline{U} \subset V \subset \overline{V} \subset \BB{H}$. 
By comparing the Green functions associated to the whole-plane and zero boundary GFFs, we see that
\[
h^{\op{IG}}_1|_{\overline{V}} \stackrel{\mcl{L}}{=} h^{0}|_{\overline{V}} + f_1,
\]
where $h^{0}$ is a zero boundary GFF on $\BB{H}$ and $f_1$ is an independent random harmonic function on $\overline{V}$ which is a centered Gaussian process on $\overline{U}$ with covariances $\text{Cov}(f_1(x),f_1(y)) = - 2 \log|x - \bar{y}|$.
By the definition of $h^{\op{IG}}_2$, 
\[
h^{\op{IG}}_2|_{\overline{V}} \stackrel{\mcl{L}}{=} h^{0}|_{\overline{V}} + f_2,
\]
where $h^{0}$ is a zero boundary GFF on $\BB{H}$  and $f_2$ is a bounded (deterministic) harmonic function on $\overline{V}$.

Assume we have coupled $h_1^{\op{IG}}$ and $h_2^{\op{IG}}$ so that they are independent from one another (equivalently, $\eta$ and $\eta^{\op{whole}}$ are independent from one another). 
Let $\psi$ be a smooth compactly supported ``bump function" which equals 1 on $\ol U$ and vanishes outside of $V$. Then on $\ol U$, 
\[
h_1^{\op{IG}} = h^0 + f_1 \psi \qquad \text{and} \qquad h_2^{\op{IG}} = h^0 + f_2 \psi .
\] 
Set $f = f_1 - f_2$. 
By a standard Gaussian estimate---see, e.g., the proof of~\cite[Proposition 3.4]{ig1}---the law of $h_2^{\op{IG}}|_U$ is absolutely continuous w.r.t.\ the law of $h_1^{\op{IG}}|_U$, with Radon-Nikodym derivative 
\eqbn
M := \BB E\left[ \exp\left( (h_1^{\op{IG}} , f \psi)_\nabla - \frac12 (\psi f , \psi f)_\nabla \right) \,\big|\, h_1^{\op{IG}}|_U \right] ,
\eqen
where $(\cdot,\cdot)_\nabla$ denotes the Dirichlet inner product. 

By H\"older's inequality and since $(h_1^{\op{IG}} , f\psi)_\nabla$ is Gaussian with variance $(f\psi,f\psi)_\nabla$, for $q>1$,
\eqb
\BB E \left(M^q \right) 
\leq \BB{E} \exp\left( q (h_1^{\op{IG}}, f\psi)_{\nabla} - q (f\psi, f\psi)_{\nabla}/2 \right)
=
 \BB{E} \exp\left( \frac{q^2-q}{2} (f\psi, f\psi)_{\nabla} \right) \label{PE}
\eqe
A short computation using integration by parts (Green's identities) shows that
\[
 (f\psi, f\psi)_{\nabla} = \int_V \left( \frac{1}{2} \Delta ( \psi(w)^2) - \psi(w) \Delta \psi(w) \right) f(w)^2 dw.
\]
Therefore, using the notation $\| \varphi\|_D = \sup_{z\in D} |\varphi(z)|$ for a function $\varphi$ on a domain $D \subset \BB{C}$,~\eqref{PE} is bounded above by
\[
 \BB{E} \exp\left( c\frac{q^2 -q}{2} \left(\| f_1\|_{\overline{V}}^2 + \|f_2 \|_{\overline{V}}^2 \right) \right)
\]
for some constant $c$ depending only on $\psi$. 

Recall that $f_2$ is deterministic and uniformly bounded.  Moreover, since  $\|f_1\|_{\overline{V}} < \infty$,  the Borell-TIS inequality~\cite{borell-tis1,borell-tis2} (see, e.g.,~\cite[Theorem 2.1.1]{adler-taylor-fields}) gives $\BB{E}\|f_1\|_{\overline{V}} < \infty$,  $\sigma^2 := \sup_{v \in \overline{V}} \BB{E} |f_1|^2 < \infty$, and
\[
\BB{P}\left[ \|f_1\|_{\overline{V}} - \BB{E}\|f_1\|_{\overline{V}} > u \right] \leq \exp(-u^2/(2\sigma^2))
\]
for each $u > 0$.   Hence, we can choose $q>1$ sufficiently small such that
\[
\BB{E} \exp\left( c\frac{q^2-q}{2} \left(\| f_1\|_{\overline{V}}^2 + \|f_2 \|_{\overline{V}}^2 \right) \right)
\]
is finite, as desired.
\end{proof}

By the discussion before the lemma, this implies

\begin{cor}
\label{nikodym}
Let $\eta$ be a chordal SLE$_8$ from $0$ to $\infty$ in $\BB H$, and let $\eta^{whole}$ be a whole-plane SLE$_8$ from $\infty$ to $\infty$.  Let $U$ be an open subset of $\BB H$.
The law of the collection of segments of $\eta$ contained in $U$ (as defined, e.g., in~\eqref{eqn-curve-segments}) is absolutely continuous w.r.t.\ the law of the collection of segments of $\eta^{whole} $ contained in $U$, with the Radon-Nikodym derivative having a finite $q$-th moment for some $q>1$.  
\end{cor}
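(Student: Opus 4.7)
The plan is to deduce Corollary \ref{nikodym} as an immediate consequence of Lemma \ref{lem-nikodym} by pushing forward under the measurable map that extracts the SLE$_8$ segments from the imaginary geometry field. The key ingredient, stated in the paragraph preceding Lemma \ref{lem-nikodym} and justified in \cite[Lemma 2.1]{gms-harmonic}, is that for either variant of SLE$_8$ the collection of segments of the curve contained in $U$ is almost surely determined by the restriction of the corresponding imaginary geometry field to $\ol U$. Since the counterflow line construction of SLE$_8$ is local and does not see the boundary data of the underlying GFF when one works on $\ol U \subset \BB H$, a single measurable functional $\Phi$ can be used to produce the segments of $\eta$ in $U$ from $h^{\op{IG}}_2|_{\ol U}$ and the segments of $\eta^{\text{whole}}$ in $U$ from $h^{\op{IG}}_1|_{\ol U}$, almost surely.

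With this in hand, I would write $\mu_i$ for the law of $h^{\op{IG}}_i|_{\ol U}$ and let $M := d\mu_2/d\mu_1$ be the Radon--Nikodym derivative supplied by Lemma \ref{lem-nikodym}, so that $\BB E_{\mu_1}[M^q] < \infty$ for some $q>1$. The laws of the segments of $\eta$ and $\eta^{\text{whole}}$ in $U$ are then the pushforwards $\Phi_*\mu_2$ and $\Phi_*\mu_1$, pushforwards preserve absolute continuity, and disintegrating $\mu_1$ along the fibres of $\Phi$ identifies the new Radon--Nikodym derivative as the conditional expectation $\wt M = \BB E_{\mu_1}\!\left[M \,\big|\, \sigma(\Phi)\right]$. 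The conditional Jensen inequality applied to the convex function $x \mapsto x^q$ then gives
\[
\BB E_{\Phi_*\mu_1}\!\left[\wt M^q\right] \leq \BB E_{\mu_1}[M^q] < \infty,
\]
which is exactly the $q$-th moment bound claimed by the corollary.

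There is no substantive technical obstacle in this argument, since all of the analytic work has already been carried out in Lemma \ref{lem-nikodym} and the corollary is a purely formal pushforward. The one potentially delicate point is the existence of a \emph{common} measurable functional $\Phi$ handling both the chordal and the whole-plane cases, which encodes the locality of the imaginary geometry construction of SLE$_8$; for this I would simply invoke \cite[Lemma 2.1]{gms-harmonic} as a black box.
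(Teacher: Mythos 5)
Your argument is correct and is essentially the paper's own proof: the paper deduces the corollary from Lemma~\ref{lem-nikodym} precisely via the observation (cited to \cite[Lemma 2.1]{gms-harmonic}) that the segments in $U$ are a.s.\ a measurable function of the imaginary geometry field restricted near $U$, with the pushforward/conditional-Jensen step you spell out left implicit. Your write-up simply makes explicit the identification of the new Radon--Nikodym derivative as $\BB E_{\mu_1}[M \mid \sigma(\Phi)]$ and the resulting preservation of the $q$-th moment bound, which is a correct and welcome elaboration rather than a different route.
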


\subsection{The one-sided mated-CRT map: two equivalent definitions}
\label{sec-mated-crt}

\begin{figure}[t!]
 \begin{center}
\includegraphics[scale=.65]{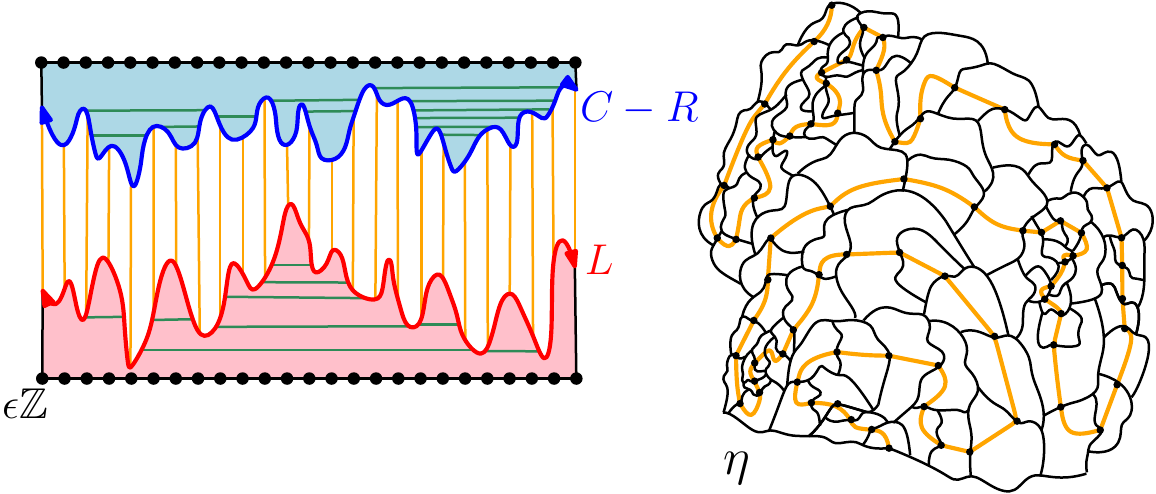}
\vspace{-0.01\textheight}
\caption{\textbf{Left:} We can construct the mated-CRT map $\mcl G^\ep$ restricted to some finite interval $I \subset \BB N$ geometrically as follows.  We can draw the graph of $L$ (red) and the graph of $C-R$ (blue) for some large constant $C > 0$ chosen so that the parts of the graphs over the time interval $I$ do not intersect. We then divide the region between the graphs into vertical strips (boundaries shown in orange) and identify each strip with the horizontal coordinate $x\in \ep \BB N \cap I$ of its rightmost point. A pair of vertices $x_1,x_2\in \ep \BB N \cap I$ is connected by an edge if and only if the corresponding strips are connected by a horizontal line segment which lies under the graph of $L$ or above the graph of $C-R$. For each pair of vertices in the figure that satisfy this condition, we have drawn one of these horizontal segments in green.
\textbf{Right:} The mated-CRT map can be realized as the adjacency graph of \emph{cells} $\eta([x-\ep,x])$ for $x\in \ep\BB Z$, where $\eta$ is a space-filling SLE$_8$ parametrized by $\sqrt 2$-LQG mass with respect to an independent $\sqrt 2$-quantum cone. Here, the cells are outlined in black and the order in which they are hit by the curve is shown in orange. 
Note that the two pictures do not correspond to the same mated-CRT map realization. 
Similar figures have appeared in~\cite{ghs-map-dist,gm-spec-dim,dg-lqg-dim,gh-displacement}. 
}\label{fig-mated-crt-map}
\end{center}
\vspace{-1em}
\end{figure} 

We now introduce a particularly useful random planar map with the half-plane topology called the \emph{one-sided mated-CRT map}.\footnote{It is possible to define a one-sided mated-CRT map associated to each $\gamma \in (0,2)$, but we will define the one-sided mated-CRT map as the one associated to $\gamma = \sqrt{2}$.}   Like the spanning-tree-weighted random planar map, the one-sided mated-CRT map is in the $\sqrt 2$-LQG universality class.  However, it can be defined directly in terms of continuum objects, namely, a $\sqrt{2}$-quantum surface and an independent space-filling SLE$_8$.  
 This allows us to use LQG and SLE techniques to prove quantitative estimates for graph distances in the one-sided mated-CRT map---estimates that we cannot prove directly for spanning-tree-weighted random planar maps.  The exponents $\chi$ and $d$ both arise in the context of the one-sided mated-CRT map, and it is this model that we will analyze in our proof of Theorem~\ref{thm-chi}.

There are actually two equivalent definitions of the one-sided mated-CRT map: one in terms of a pair of Brownian motions, and one in terms of LQG and SLE.  We will use both of these definitions in the paper since this will allow us to analyze distances in the one-sided mated-CRT map using both Brownian motion techniques and tools from LQG and SLE theory.  We first give the definition in terms of Brownian motions.

\begin{defn} \label{def-mated-CRT} 
For a pair $Z = (L,R)$ of independent one-sided Brownian motions, we define the \textit{one-sided mated-CRT map} $\mcl G^{\ep}$ as the graph with boundary whose vertex set is $\ep \BB{N}$, in which two vertices $x_1, x_2 \in \ep \BB{N}$ with $x_1 < x_2$ are considered to be adjacent iff there is an $s_1 \in [x_1-\ep,x_1]$ and an $s_2 \in [x_2 - \ep,x_2]$ such that either
\eqb \label{eqn-inf-adjacency}
\inf_{r \in [s_1,s_2]} L_r = L_{s_1} = L_{s_2} \quad \text{or} \quad \inf_{r \in [s_1,s_2]} R_r = R_{s_1} = R_{s_2}.
\eqe 
We define the boundary as the set of vertices $x\in \ep\BB N$ for which either $L$ or $R$ attains a running minimum in the time interval $[x-\ep,x]$.

Also, for an interval $I \subset (0,\infty)$, we denote by $\mcl G^{\ep}|_I$ the subgraph of $\mcl G^{\ep}$ induced by the set of vertices of $\mcl G^{\ep}$ contained in $I$. 
\end{defn}

See Figure~\ref{fig-mated-crt-map}, left, for an illustration of Definition~\ref{def-mated-CRT}. 
The graph is called the one-sided mated-CRT map because it can be viewed as a discretized mating of a pair of continuum random trees (CRTs) associated to the two one-sided Brownian motions. It can be constructed from the more commonly used two-sided mated-CRT map by restricting to vertices in $\ep\BB N$.
Definition~\ref{def-mated-CRT} defines the mated-CRT map as a graph, but it is easy to see that it also has a natural structure as a planar map with infinite boundary: this follows, e.g., from the discussion just after Definition~\ref{gdef}. 
One should think of Definition~\ref{def-mated-CRT} as a semi-continuous analog of the Mullin bijection. Indeed, it is easy to see that in the setting of the Mullin bijection, the condition for two triangles of $Q\cup T \cup T_*$ to share an edge is a discrete analog of~\eqref{eqn-inf-adjacency} with $(\mcl L , \mcl R)$ in place of $(L, R)$; see, e.g.,~\cite[Proposition 3.3]{ghs-map-dist}.

The connection between the graph just defined and the theory of LQG that will give us our second definition of the one-sided mated-CRT map is a special case of a framework developed in \cite{wedges} for identifying SLE-decorated LQG surfaces as canonical embeddings of mated CRT pairs.  It follows from the work of~\cite{wedges} that the one-sided mated-CRT map  can be coupled to the graph defined by a particular discretization of an SLE$_8$-decorated $Q$-quantum wedge, with the coupling defined so that it yields a natural graph isomorphism between the two graphs. To describe this equivalence, we first define a more general class of graphs obtained by discretizing SLE$_8$-decorated $\sqrt{2}$-LQG surfaces (along with a natural notion of distance in such graphs), since our proofs will sometimes require this more general context.

\begin{defn}
Let $(\mcl D,h)$ be a quantum surface, with $h$ some variant of a GFF on $\mcl D$; and let $\eta$ be an independent space-filling SLE$_{8}$-type curve\footnote{In this paper, we will take $\mcl D = \BB{H}$ and $\eta$ a chordal SLE$_8$ from $0$ to $\infty$ in $\BB H$ parametrized by $[0,\infty)$.  One could also consider, e.g., $\mcl D = \BB C$ and $\eta$ a whole-plane SLE$_8$ from $\infty$ to $\infty$ in $\BB C$ parametrized by $\BB R$.
} sampled independently from $h$ and then parametrized by $\sqrt{2}$-LQG mass with respect to $h$. For fixed $\ep > 0$, we define $\mcl{G}_{\mcl D,h,\eta}^{\ep}$ to be the graph of cells of the form  $\eta([x-\ep,x])$, where $x$ ranges over the elements of $\ep \BB{Z}$ for which $[x-\ep,x]$ lies in the domain of $\eta$. Two such cells are considered to be adjacent in the graph if they intersect.

Also, for $U \subset \mcl D$ and $z_1,z_2$ in the closure of $U$ in $\BB{C}$, let $D^{\ep}_{D,h,\eta}(z_1,z_2;U)$ be equal to the graph distance between the cells containing $z_1$ and $z_2$ in the adjacency graph of cells in $\mcl{G}^{\ep}_{\mcl D,h,\eta}$ that are contained in $\overline{U}$.
We abbreviate $D^{\ep}_{\mcl D,h,\eta}(\cdot,\cdot):= D^{\ep}_{\mcl D,h,\eta}(\cdot,\cdot;\mcl D)$. 
\label{gdef}
\end{defn}

We now consider the special case of Definition~\ref{gdef} which corresponds to the one-sided mated-CRT map.  
Suppose that $h$ is the circle average embedding of a $Q$-quantum wedge and our SLE$_8$ curve $\eta$ is sampled independently from $h$ and then parametrized by $\sqrt{2}$-quantum mass with respect to $h$. Consider for each $t > 0$ the closed region $\eta([0,t])$ of the upper-half plane, and let $x_t$ and $y_t$ denote the infimum and supremum, respectively, of the set of points where $\eta([0,t])$ intersects the real line.  We define the left boundary length $L_t$ of $\eta$ at time $t$ to be the $\sqrt{2}$-LQG length of the boundary arc of the closed region $\eta([0,t])$ from $\eta(t)$ to $x_t$, minus the $\sqrt{2}$-LQG length of  the segment $[x_t,0]$.   
Similarly, we define the right boundary length $R_t$ of $\eta$ at time $t$ to be the $\sqrt{2}$-LQG length of the boundary arc of  the closed region $\eta([0,t])$ from $\eta(t)$ to $y_t$, minus the $\sqrt{2}$-LQG length of  the segment $[0,y_t]$. See Figure~\ref{fig-bdy-process} for an illustration. We note that the definition of $(L,R)$ is the continuum analogue of the so-called horodistance process for peeling processes on random planar maps, as studied, e.g., in~\cite{curien-glimpse,gwynne-miller-perc}. 

\begin{figure}
\centering
\includegraphics[width=0.7\textwidth]{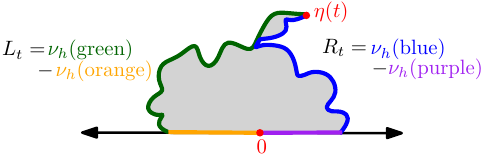}
\caption{The definitions of the processes $L$ and $R$.} \label{fig-bdy-process}
\end{figure}

It follows from \cite[Theorems 1.9]{wedges} (applied only for positive time and with $\gamma=\sqrt 2$) that the left and right boundary processes $L$ and $R$ are independent Brownian motions and the adjacency graph of cells $\mcl{G}_{\mcl D,h,\eta}^{\ep}$ is isomorphic to the one-sided mated-CRT map $\mcl{G}^{\ep}$ via $x\mapsto \eta([x-\ep,x])$. This gives us a second equivalent definition of the one-sided mated-CRT map. 

\begin{notation}
In the special case of Definition~\ref{gdef} which corresponds to the one-sided mated-CRT map discussed just above, we abbreviate $D^{\ep}_{D,h,\eta}$ by $D^{\ep}$.
\label{notation-D-ep}
\end{notation}

 We will express the law of $\mcl{G}^{\ep}$ in terms of either Definition~\ref{def-mated-CRT} or~\ref{gdef} depending on the context: the former definition will be more useful in Section~\ref{proofpart1}, and the latter in Section~\ref{proofpart2}.

\section{Proof of Theorem~\ref{thm-chi} conditional on two propositions}
\label{outline}

The proof of Theorem~\ref{thm-chi} is the most technically involved part of this paper. 

The starting point of the proof is the following two results from the existing literature. The first follows from \cite[Theorem 1.15]{ghs-dist-exponent} and the representation of the mated-CRT map as an adjacency graph of cells; the second is a special case of  \cite[Proposition 4.4]{dg-lqg-dim}.

\begin{lem}[\cite{ghs-dist-exponent}]
Let $(\BB H , h , 0 ,\infty)$ be a $Q$-quantum wedge, let $\eta$ be an independent space-filling SLE$_8$ parameterized by $\sqrt 2$-LQG mass with respect to $h$, and let $D^\ep_{\BB H , h,\eta}$ be as in Definition~\ref{gdef}. For each $\zeta \in (0,1)$,
\eqbn
\BB{P}\left[ D_{\BB H , h , \eta}^\ep \left( 0 , 1 ; \eta[0,1] \right) \geq \ep^{-\chi + \zeta} \right] \geq \ep^{o_{\ep}(1)} \quad \text{as $\ep\rta 0$}.
\eqen 
\label{thm1.15}
\end{lem}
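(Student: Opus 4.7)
The plan is to obtain the lemma by a direct translation of \cite[Theorem 1.15]{ghs-dist-exponent} using the SLE$_8$-on-quantum-wedge representation of the one-sided mated-CRT map recorded at the end of Section~\ref{sec-mated-crt}. In \cite{ghs-dist-exponent}, the exponent $\chi$ is defined intrinsically in terms of the graph distance in $\mcl G^\ep|_{[0,1]}$ between the first and last vertices, and Theorem 1.15 there supplies precisely the lower bound
\[
\BB P\left[\op{dist}\left(\ep,\lfloor 1/\ep\rfloor\ep;\mcl G^\ep|_{[0,1]}\right)\geq \ep^{-\chi+\zeta}\right]\geq \ep^{o_\ep(1)}\quad\text{as }\ep\rta 0,
\]
which is the mated-CRT-map analogue of the statement we want.

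To pass from this estimate to the continuum formulation, I invoke the special case of \cite[Theorem 1.9]{wedges} discussed just after Definition~\ref{gdef}: when $(\BB H,h,0,\infty)$ is the $Q$-quantum wedge (in the circle-average embedding) and $\eta$ is an independent space-filling SLE$_8$ from $0$ to $\infty$ parametrized by $\sqrt 2$-LQG mass with respect to $h$, the associated left/right boundary length processes $(L,R)$ are two independent linear Brownian motions, and the map $x\mapsto \eta([x-\ep,x])$ is a graph isomorphism from the one-sided mated-CRT map $\mcl G^\ep$ of Definition~\ref{def-mated-CRT} to the cell adjacency graph $\mcl G^\ep_{\BB H,h,\eta}$ of Definition~\ref{gdef}. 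Restricted to $[0,1]$, this isomorphism identifies $\mcl G^\ep|_{[0,1]}$ with the subgraph of cells contained in $\eta([0,1])$; graph distance translates to $D^\ep_{\BB H,h,\eta}(\cdot,\cdot;\eta([0,1]))$; and the distinguished vertices $\ep$ and $\lfloor 1/\ep\rfloor\ep$ are sent to the cells containing the points $0=\eta(0)$ and $\eta(1)$, which are precisely the cells appearing in the statement of the lemma.

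Combining these two ingredients yields the claim. I do not expect any real obstacle, since the probabilistic content is already established in \cite{ghs-dist-exponent} and the Brownian-to-SLE/LQG identification is a direct application of \cite{wedges}; the only step requiring minor bookkeeping is the matching of the distinguished cells under the isomorphism, which is immediate from the construction. In particular, the hardest work happens inside the cited results and is not redone here.
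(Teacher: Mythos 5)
Your proposal is correct and matches the paper's treatment: the paper obtains this lemma exactly as you do, by citing \cite[Theorem 1.15]{ghs-dist-exponent} for the lower bound on the mated-CRT map distance and then invoking the representation of the one-sided mated-CRT map as the adjacency graph of cells $\eta([x-\ep,x])$ for space-filling SLE$_8$ on the $Q$-quantum wedge (the special case of \cite[Theorem 1.9]{wedges} recorded after Definition~\ref{gdef}). No further argument is given or needed beyond the bookkeeping identification of the distinguished cells, which you handle correctly.
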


We note that Lemma~\ref{thm1.15} does \emph{not} say that $D_{\BB H , h , \eta}^\ep \left( 0 , 1 ; \eta[0,1] \right) \geq \ep^{-\chi + \zeta} $ with probability tending to 1 as $\ep\rta 0$, but this will eventually follow from our results. 

\begin{lem}[\cite{dg-lqg-dim}]
Let $h^{\op{whole}}$ be a whole-plane GFF normalized so that its circle average over $\bdy \BB{D}$ is zero, let  $\eta^{\op{whole}}$ be an independent whole-plane SLE$_8$ from $\infty$ to $\infty$, and fix $\zeta \in (0,1)$.
For any open set $U \subset \BB{C}$ and any $z,w \in U$, it holds with polynomially high probability as $\ep \rightarrow 0$ that
\[
D^{\ep}_{\BB{C},h^{\op{whole}},\eta^{\op{whole}}}(z,w;U) \leq \ep^{-1/\left(d-\zeta\right)}
\]
\label{prop4.4}
\end{lem}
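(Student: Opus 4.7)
The plan is to build an explicit path of the desired length in the cell adjacency graph $\mcl{G}^\ep_{\BB{C},h^{\op{whole}},\eta^{\op{whole}}}$ from the cell containing $z$ to the cell containing $w$, using the equivalent characterization of $d$ in terms of Liouville graph distance (Remark~\ref{remark-d}). Recall that the Liouville graph distance $D^\ep_{\op{LGD}}(z,w)$ is the minimum number of Euclidean balls of $\sqrt 2$-LQG mass at most $\ep$ needed to cover some continuous path from $z$ to $w$. By the definition of $d$ via LFPP/LGD as in~\cite{dg-lqg-dim,dzz-heat-kernel}, for fixed $z,w \in \BB C$ one has $D^\ep_{\op{LGD}}(z,w) \leq \ep^{-1/(d-\zeta/2)}$ with polynomially high probability, where the underlying field is $h^{\op{whole}}$.

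To transfer this LGD bound to a graph distance bound in the mated-CRT map, I would cover the LGD-minimizing path by $N \leq \ep^{-1/(d-\zeta/2)}$ Euclidean balls $B_1, \dots, B_N$ of LQG mass at most $\ep$, ordered so that consecutive balls overlap. The key step is to show that each such ball $B_j$ intersects at most $\ep^{o(1)}$ of the SLE$_8$ cells $\eta([x-\ep,x])$; granted this, one obtains a path in $\mcl{G}^\ep$ of length at most $N \ep^{o(1)} \leq \ep^{-1/(d-\zeta)}$ between the cells of $z$ and $w$. The requisite control on the number of cells meeting a small ball is a standard mated-CRT map estimate (analogous to those used in~\cite{ghs-dist-exponent,ghs-map-dist,gms-harmonic}): if a ball of LQG mass $\ep$ met too many cells, then a large fraction of these cells would have to have unusually small Euclidean diameter, which can be ruled out by a union bound against the SLE$_8$/LQG estimates controlling the Euclidean geometry of cells.

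To ensure the path stays inside $U$, I would first shrink $U$ slightly, choose the initial LGD path to lie in a compact subset $U' \Subset U$ with $z,w \in U'$, and then observe that on the good event above, every cell meeting $U'$ has Euclidean diameter smaller than the Euclidean distance from $U'$ to $\bdy U$, so all cells used in the path lie in $U$. Since this is a local-in-$U$ statement and the fields $h^{\op{whole}}$ on a bounded neighborhood of $U$ can be related to localized GFF variants, one can reduce if needed to a situation where the LGD/LFPP estimates of~\cite{dg-lqg-dim} apply directly via a Radon-Nikodym comparison in the spirit of Lemma~\ref{lem-nikodym}, paying a $q$-th moment cost that is absorbed into the polynomial error.

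The main obstacle I expect is the last step: matching the LGD covering by Euclidean balls to an actual adjacency path of cells. Cells of the mated-CRT map can have very irregular (long and thin) Euclidean shapes near pinch points of the SLE$_8$, and so one has to work to show that with polynomially high probability no such pathological cell meets $U'$ in a way that spoils the ball-by-ball matching. Once that uniform Euclidean geometry estimate is in hand (which is ultimately the heart of the Liouville graph distance / mated-CRT comparison in~\cite{dg-lqg-dim}), the rest of the argument is a straightforward covering-to-path conversion and union bound.
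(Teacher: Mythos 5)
The paper offers no internal proof of this statement: Lemma~\ref{prop4.4} is imported as a special case of \cite[Proposition 4.4]{dg-lqg-dim}, so the only ``paper proof'' to compare against is that citation. Your sketch follows essentially the same route as the proof in the cited reference: the exponent $d$ enters through Liouville graph distance, and the mated-CRT bound is obtained by converting a covering of an LGD geodesic by Euclidean balls of $\sqrt 2$-LQG mass at most $\ep$ into a path in the adjacency graph of SLE$_8$ cells, using uniform control on the Euclidean geometry of the cells.

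As a self-contained argument, however, the step you flag is a genuine gap rather than a routine verification. The claim that every mass-$\ep$ ball meeting a fixed compact set intersects at most $\ep^{-o_\ep(1)}$ cells requires (i) a uniform mass-doubling-type estimate to count the cells contained in a slightly enlarged ball, uniformly over the relevant balls, whose Euclidean radii can be as small as a high power of $\ep$; (ii) a uniform regularity (``fatness'') estimate for cells whose diameter exceeds the ball's radius and which merely graze it, together with a packing argument; and (iii) the observation that the cells meeting a connected set form a connected subgraph of $\mcl G^\ep$, so that the covering actually yields a graph path staying in $U$. Items (i) and (ii) are precisely the technical content of the proof of \cite[Proposition 4.4]{dg-lqg-dim} (and of the related cell-versus-ball comparisons in \cite{ghs-dist-exponent,ghs-map-dist}); invoking them as ``standard mated-CRT estimates'' while proving the very proposition they underlie is circular. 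So your proposal is a faithful outline of why the lemma is true, but not an independent proof: either supply those estimates or cite them explicitly, in which case one may as well cite the proposition itself, as the paper does. Two minor points: the Radon--Nikodym reduction you mention is unnecessary, since the LGD estimates of \cite{dg-lqg-dim} are already formulated for the whole-plane GFF; and the polynomially-high-probability upper tail for $D^\ep_{\op{LGD}}(z,w)$ is itself a quantitative input from that literature, not something that follows from ``the definition of $d$'' alone.
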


The first lemma bounds one quantity from below by $\ep^{-\chi + u}$ with probability $\geq \ep^{o_{\ep}(1)}$; the second bounds another quantity from above by $\ep^{-1/(d-\zeta)}$ with polynomially high probability as $\ep \rightarrow 0$.  If the quantities being bounded in the two lemmas were the same, then since $\ep^{o_{\ep}(1)} \leq 1 - \ep^p$ for any fixed $p$ and $\ep$ sufficiently small, we could deduce that $\ep^{-\chi + u} \leq \ep^{-1/(d-\zeta)}$, which would imply that $\chi \leq 1/d$. Together with Lemma~\ref{lem-obvious-direction}, this proves $\chi = 1/d$.

However, the quantities being bounded in the two lemmas are not the same.  Both are distances are of the form defined in Definition \ref{gdef}, but the underlying fields are different. More importantly, Lemma~\ref{prop4.4} considers the distance between two interior points of a region of $\BB{C}$, while Lemma~\ref{thm1.15} bounds the distance between  a pair of boundary points of a random region (one of which is the origin, where the field has a log singularity). So we cannot directly combine Lemmas \ref{prop4.4} and \ref{thm1.15} to prove Theorem~\ref{thm-chi}.  Instead, we will use Lemmas~\ref{prop4.4} and~\ref{thm1.15} to prove upper and lower bounds for a third distance quantity that we will be able to relate to the two different distances considered in Lemmas~\ref{prop4.4} and~\ref{thm1.15}. 

To formulate this third distance quantity, we define, for an interval $I  = (a,b) \subset [0,\infty)$, a notion of upper and lower boundary associated to the submap $\mcl G^\ep|_I$.  We can view the submap $\mcl G^\ep|_I$ as the adjacency graph of cells $\eta([x-\ep,x])$ for $x\in \ep\BB Z \cap I$.  We define the upper boundary as the set of cells adjacent to cells in $\mcl G^\ep|_{(b,\infty)}$, and the lower boundary as the set of cells either adjacent to cells in $\mcl G^\ep|_{[0,a)}$ \emph{or} adjacent to the boundary $\BB R$ of the upper-half plane.  (The definitions of the upper and lower boundaries are not symmetric with respect to each other because we are looking at the \emph{one-sided} mated-CRT map.) We state these definitions in terms of the Brownian motions $L$ and $R$.

\begin{defn}
\label{def-boundary}
For an interval $I \subset [0,\infty)$, we define the lower (resp. upper) boundary $\ul{\bdy}_{\ep}I$ (resp. $\ol{\bdy}_{\ep}I$) of the one-sided mated-CRT map $\mcl G^{\ep}$ restricted to $I$ to be the set of points in $(\ep \BB N) \cap I$ such that either of the Brownian motions $L,R$ (resp. their time reversals) achieves a running infimum in $[x-\ep,x]$ relative to the left (resp. right) endpoint of $I$. We define $\bdy \mcl G^\ep|_I = \ul{\bdy}_\ep I \cup \ol{\bdy}_\ep I$. 
\end{defn}

In terms of the left side of Figure~\ref{fig-mated-crt-map}, a point $x \in (\ep\BB N) \cap I$ is on the lower (resp. upper) boundary if we can connect the corresponding vertical strip to the left (resp. right) boundary of the leftmost (rightmost) vertical strip, by a horizontal line that lies either under the graph of $L$ or above the graph of $R$.  

If we write $I = (a,b)$, then from Definition~\ref{def-mated-CRT}, we see that Definition~\ref{def-boundary} is simply saying that
\begin{itemize}
\item
$\ul\bdy_\ep I$  is the set of vertices of $\mcl G^\ep|_I$ that either are adjacent to vertices of $\mcl G^\ep|_{[0,a)}$ or lie in $\bdy\mcl G^\ep$; and
\item
$\ol\bdy_\ep I$ is the set of vertices of $\mcl G^\ep|_I$ that are adjacent to vertices of $\mcl G^\ep|_{[b,\infty)}$.
\end{itemize}
So Definition~\ref{def-mated-CRT} is indeed equivalent to our first definition of the upper and lower boundaries in terms of SLE cells.

We will prove the following upper and lower bounds. For the statements, we recall the notation for graph distance from the end of Section~\ref{sec-notation}.  


\begin{prop}
In the notation of Definitions~\ref{def-mated-CRT} and~\ref{def-boundary}, for each $\zeta \in (0,1)$,  
\eqbn
 \BB P\left[ \op{dist}\left( \ep , \ol{\bdy}_{\ep}(0,1]; \mcl G^\ep|_{(0,1]} \right) \geq \ep^{-\chi + \zeta} \right] \geq \ep^{o_\ep(1)} ,\quad \text{as $\ep\rta 0$}.
\eqen 
\label{thm1.15modified}
\end{prop}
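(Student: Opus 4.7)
The plan is to deduce Proposition~\ref{thm1.15modified} by refining the proof of Lemma~\ref{thm1.15} so that the lower bound holds uniformly over cells in $\ol\bdy_\ep (0,1]$, rather than for one specific target cell. I would work with the SLE-LQG representation of the mated-CRT map from Definition~\ref{gdef}, so that $\mcl G^\ep|_{(0,1]}$ is the cell adjacency graph of $\{\eta([x-\ep,x]) : x \in (\ep \BB N) \cap (0,1]\}$ inside the region $\eta([0,1]) \subset \BB H$. The first cell $\eta([0,\ep])$ contains the origin, while the cells in $\ol\bdy_\ep (0,1]$ are precisely the ones that touch the SLE$_2$-type curve $\gamma_1 := \bdy\eta([0,1]) \setminus \BB R$, as one sees by comparing Definition~\ref{def-boundary} with the boundary length description of $\eta$ in Section~\ref{sec-mated-crt}.

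The key observation is that the proof of Lemma~\ref{thm1.15} in \cite[Theorem 1.15]{ghs-dist-exponent} really yields a stronger local statement: on an event $E^{\op{loc}}_\ep$ of probability $\ep^{o_\ep(1)}$, there is a random Euclidean neighborhood $U^\ep \ni 0$ in $\BB H$ with $\op{diam}_{\BB C}(U^\ep) = o_\ep(1)$ such that the mated-CRT map graph distance from the first cell to any cell not contained in $U^\ep$ is at least $\ep^{-\chi+\zeta}$. Lemma~\ref{thm1.15} is then the special case that the cell containing the point $1 \in \BB R$ lies outside $U^\ep$, which holds as soon as $\op{diam}_{\BB C}(U^\ep) < 1$.

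With this in mind, I would pair $E^{\op{loc}}_\ep$ with a second event $E^{\op{ext}}_\ep$ of polynomially high probability on which the curve $\gamma_1$ stays at positive Euclidean distance $\rho > 0$ from the origin, where $\rho$ is chosen large enough (and $\ep$ small enough) that $\op{diam}_{\BB C}(U^\ep) < \rho$ on $E^{\op{loc}}_\ep$. On $E^{\op{loc}}_\ep \cap E^{\op{ext}}_\ep$, every cell touching $\gamma_1$, and hence every cell in $\ol\bdy_\ep (0,1]$, is disjoint from $U^\ep$, giving the desired bound on $\op{dist}(\ep, \ol\bdy_\ep (0,1]; \mcl G^\ep|_{(0,1]})$. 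The event $E^{\op{ext}}_\ep$ can be produced using standard estimates on the $Q$-quantum wedge and the independent SLE$_8$, in particular, that the image of the SLE curve stays macroscopically separated from a prescribed point with polynomially high probability; these can be transferred from whole-plane SLE$_8$ to the wedge setting via Corollary~\ref{nikodym}.

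The main obstacle I expect is verifying that $\BB P(E^{\op{loc}}_\ep \cap E^{\op{ext}}_\ep) \geq \ep^{o_\ep(1)}$, since the two events involve fluctuations of $(h, \eta)$ at very different scales. The natural tool is the orthogonal decomposition $h = h_1 + h_2$ from Definition~\ref{wedgedef}, together with the spatial Markov property of the free-boundary GFF: $E^{\op{loc}}_\ep$ is essentially determined by $(h, \eta)$ in a small Euclidean ball around $0$, while $E^{\op{ext}}_\ep$ can be arranged to depend only on fluctuations at the complementary scale, so the two events can be decoupled via absolute-continuity arguments in the spirit of Corollary~\ref{nikodym}.
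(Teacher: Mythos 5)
Your reduction rests on two inputs that are not available as stated, and the first of them is essentially the content of the proposition itself. Lemma~\ref{thm1.15} is a two-point estimate: it bounds $D^\ep_{\BB H,h,\eta}(0,1;\eta[0,1])$, the distance between the cells containing the two marked points $0$ and $1$, from below on an event of probability only $\ep^{o_\ep(1)}$. The ``stronger local statement'' you attribute to its proof --- that on this event there is a random neighborhood $U^\ep\ni 0$ of Euclidean diameter $o_\ep(1)$ such that \emph{every} cell not contained in $U^\ep$ is at graph distance at least $\ep^{-\chi+\zeta}$ from the origin cell --- is exactly the upgrade from a point-to-point bound to a point-to-boundary bound that Proposition~\ref{thm1.15modified} is designed to supply, and you neither prove it nor cite a place where it is proved. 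It is not automatic: at this stage the lower bound $\ep^{-\chi+\zeta}$ is only known to hold with probability $\ep^{o_\ep(1)}$, so there is no a priori control saying that the graph ball of radius $\ep^{-\chi+\zeta}$ about the origin cell has small Euclidean diameter on that event, and the event constructed in~\cite{ghs-dist-exponent} is tied to the specific target point $1$. The paper's proof is structured precisely to avoid needing such a statement: it passes to the equivalent form (Proposition~\ref{thm1.15modified'}), works on the Brownian side, shows via Pitman's theorem and Imhof's relation that the map built from two Brownian meanders is absolutely continuous with respect to $\mcl G^\ep|_{(0,1]}$ with an explicit Bessel Radon--Nikodym derivative (Proposition~\ref{structure}), observes that after a small time shift the entire lower boundary is traced during a short initial time interval (Lemma~\ref{prop-inf-times-poly}) and is therefore within graph distance $\ep^{-(1-\beta u)(\chi+v)}$ of the first cell by the \emph{upper} bound of~\cite{ghs-dist-exponent} (which does hold with superpolynomially high probability), and then concludes by the triangle inequality together with an explicit unconditioning step costing only a factor $\ep^{u(1+u)}$.

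The second input, the event $E^{\op{ext}}_\ep$, also fails as stated. For fixed $\rho>0$, the event that $\gamma_1$ stays at Euclidean distance $\rho$ from $0$ is equivalent to $B_\rho(0)\cap\BB H\subset\eta([0,1])$, which forces $\mu_h(B_\rho(0)\cap\BB H)\le 1$; because of the critical $Q$-log singularity of the wedge field at the origin, $\mu_h(B_r(0)\cap\BB H)$ is roughly $e^{-\sqrt 2 X_{2\log(1/r)}}$ with $X$ a Bessel-$3$ process, so this event fails with probability of order $(\log(1/r))^{-3/2}$ --- bounded away from $0$ for fixed $\rho$, and only polylogarithmically small even if you let $\rho=\rho_\ep$ decay polynomially. (This is why the paper's Lemma~\ref{arealastcell} and Proposition~\ref{lastcell} must work at radius $2^{-\ep^{-\xi}}$ to obtain polynomial bounds.) Corollary~\ref{nikodym} does not help here, since the event concerns the quantum-mass parametrization near the boundary point $0$, not merely the curve in a compact subset of $\BB H$. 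Consequently you cannot intersect $E^{\op{loc}}_\ep$ (whose probability tends to $0$) with $E^{\op{ext}}_\ep$ by a union bound --- note that if $E^{\op{ext}}_\ep$ genuinely had polynomially high probability, a union bound would preserve the $\ep^{o_\ep(1)}$ bound and no decoupling would be needed at all --- and the ``spatial Markov plus absolute continuity'' decoupling you defer is exactly the missing quantitative step: conditioning on the low-probability event $E^{\op{loc}}_\ep$ can in principle concentrate on configurations in which $\gamma_1$ approaches the origin. As it stands, the proposal does not prove the proposition.
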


\begin{prop}
For each $\zeta \in (0,1)$, it holds with polynomially high probability as $\ep \rta 0$ that
\[
\op{dist}\left( \ep , \ol{\bdy}_{\ep}(0,1]; \mcl G^\ep|_{(0,1]} \right) \leq  \ep^{- 1/(d - \zeta)} .
\]
\label{prop4.4modified}
\end{prop}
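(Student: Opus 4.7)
The starting observation is that the rightmost vertex $\lfloor\ep^{-1}\rfloor\ep$ of $\mcl G^\ep|_{(0,1]}$ is adjacent in $\mcl G^\ep$ to the next vertex $(\lfloor\ep^{-1}\rfloor+1)\ep>1$, and so itself lies in $\ol\bdy_\ep(0,1]$. Hence it suffices to show $\op{dist}(\ep,\lfloor\ep^{-1}\rfloor\ep;\mcl G^\ep|_{(0,1]})\le\ep^{-1/(d-\zeta)}$ with polynomially high probability. Under the SLE/LQG description of $\mcl G^\ep$ from Section~\ref{sec-mated-crt}, this internal distance equals $D^\ep_{\BB H,h,\eta}(\eta(0),\eta(1);\eta([0,1]))$, where $h$ is a $Q$-quantum wedge on $\BB H$ and $\eta$ an independent space-filling chordal SLE$_8$ from $0$ to $\infty$ parametrized by $\sqrt 2$-LQG mass.

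My plan is to decouple the two singular endpoints from the bulk. Choose $a$ slightly larger than $1-1/(d-\zeta)$, so $\ep^{a-1}\le\ep^{-1/(d-\zeta)}$. The trivial greedy paths along $\eta$ from $\eta(0)$ to $\eta(\ep^a)$ and from $\eta(1-\ep^a)$ to $\eta(1)$ each cross at most $\ep^{a-1}$ cells, so by the triangle inequality it remains to control the bulk distance from $\eta(\ep^a)$ to $\eta(1-\ep^a)$. Standard SLE$_8$ hitting and size estimates give that, with polynomially high probability, $\eta([\ep^a,1-\ep^a])$ lies inside a compact $K=K_\ep\subset\BB H\setminus\{0\}$ (depending only polynomially on $\ep$ near the origin). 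On this event, Corollary~\ref{nikodym} together with the analogous Girsanov-type Radon-Nikodym estimate for the wedge field versus the whole-plane GFF on $K$ gives mutual absolute continuity of $(h,\eta)|_K$ against $(h^{\op{whole}},\eta^{\op{whole}})|_K$ with Radon-Nikodym derivative of finite $q$th moment for some $q>1$; by H\"older's inequality it is enough to prove the analogous bulk bound in the whole-plane setting with a strictly better exponent $1/(d-\zeta/2)$ in place of $1/(d-\zeta)$.

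In the whole-plane setting, cover $K$ by $O(\ep^{-2b})$ Euclidean balls $B_\delta(z_i)$ of radius $\delta=\ep^b$ for a small $b>0$. Apply Lemma~\ref{prop4.4} with $U$ a fixed open neighborhood of $K$ to every pair $(z_i,z_j)$; by translation invariance of the whole-plane GFF/SLE$_8$ pair the polynomial rate in Lemma~\ref{prop4.4} is uniform over such pairs, so a union bound over the $O(\ep^{-4b})$ pairs gives $\max_{i,j} D^\ep_{\BB C,h^{\op{whole}},\eta^{\op{whole}}}(z_i,z_j;U)\le\ep^{-1/(d-\zeta/4)}$ simultaneously with polynomially high probability, provided $b$ is chosen small enough. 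Standard $\sqrt 2$-LQG area moment bounds on $\mu_h(B_\delta(w))$ show that $B_\delta(w)$ contains at most $\ep^{2b-1+o(1)}$ cells with polynomial probability, which controls the short ``local'' paths connecting $\eta^{\op{whole}}(\ep^a)$ and $\eta^{\op{whole}}(1-\ep^a)$ to their nearest grid centers. For $b$ satisfying $2b-1<-1/(d-\zeta)$ this closes the bulk bound.

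\textbf{Main obstacle.} The chief technical hurdle is that Lemma~\ref{prop4.4} produces paths in cells contained in a deterministic set $K$, whereas the internal distance in $\mcl G^\ep|_{(0,1]}$ requires paths in cells labelled by SLE-times in $(0,1]$---a random, not deterministic, constraint. Since $\eta^{\op{whole}}([0,1])$ is random, this temporal constraint cannot be encoded directly as a fixed $U$ in Lemma~\ref{prop4.4}. The cleanest resolution is to choose $K$ together with a further polynomially-probable event on $\eta^{\op{whole}}$ that forces $K\subset\eta^{\op{whole}}([\ep^a,1-\ep^a])$, exploiting the imaginary-geometry representation of $\eta^{\op{whole}}$ from Section~\ref{sec-sle-def} and the scaling symmetries of the mating-of-trees pair to match the deterministic region with the random one.
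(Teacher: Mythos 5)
Your proposal has two genuine gaps, the first of which is fatal.

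\textbf{The polynomial--distance claim near the origin is false.} You assert that $\eta([\ep^a,1-\ep^a])$ lies in a compact $K_\ep\subset\BB H\setminus\{0\}$ ``depending only polynomially on $\ep$ near the origin,'' and you want to feed this into Lemma~\ref{prop4.4} (plus Radon--Nikodym) on a fixed compact. But the $Q$-quantum wedge has a $Q$-log singularity at $0$, which is exactly the critical drift at which the $\sqrt 2$-LQG measure near the origin decays only \emph{logarithmically}, not polynomially, in the radius. Concretely, Proposition~\ref{lastcell} of the paper shows that $B_{2^{-\ep^{-\xi}}}(0)\cap\BB H\subset\eta([0,\ep])$ with polynomially high probability; in other words, $\eta(\ep)$ (let alone $\eta(\ep^a)$) is typically at \emph{super-exponentially} small Euclidean distance from the origin. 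The paper also explicitly flags (just before Lemma~\ref{hh2bound}) that the probability that $\ep^p i$ lies in $\eta([0,\ep])$ is \emph{not} polynomially high. So no polynomially-$\ep$-dependent $K_\ep$ will capture $\eta([\ep^a,1-\ep^a])$ with polynomially high probability. This also wrecks your Radon--Nikodym step: Lemma~\ref{nikodym} compares fields on a set whose closure avoids the singularities, and the $Q$-log singularity makes the Radon--Nikodym factor between the wedge field $h$ and $h^{\op{whole}}$ degenerate super-polynomially as the region approaches $0$. The paper avoids this by never doing absolute continuity near the origin: it works with the \emph{mean-zero} field $h_2$ (no singularity), pushes the bound through $\ep^{-\xi}$ dyadic annuli $B_{\ep^{-\delta}2^{-j+1}}(0)\setminus B_{\ep^{-\delta}2^{-j-2}}(0)$, and cancels the $Q\log 2$ drift per annulus against the $-Q\log 2$ from the LQG coordinate change when rescaling $h_2$. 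That geometric (not polynomial) multi-scale scheme is essential, not a convenience.

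\textbf{Your ``main obstacle'' is not resolved.} You correctly identify that Lemma~\ref{prop4.4} bounds distances in a deterministic region $U$, while $\op{dist}(\cdot,\cdot;\mcl G^\ep|_{(0,1]})$ constrains the path to cells with SLE-times in the random set $(0,1]$. But the proposed fix---conditioning on an event forcing $K\subset\eta^{\op{whole}}([\ep^a,1-\ep^a])$---is left as a wish. The paper resolves this cleanly and differently: it constructs a path in $\mcl G^\ep$ (no time constraint at all) from the $0$-cell to the cell containing the fixed point $\ep^{-\delta}i/2$, of length $\le\ep^{-1/(d-\zeta)}$ with polynomially high probability (Proposition~\ref{allsteps}); and then shows separately (Lemma~\ref{lem-area} plus Proposition~\ref{slecor}) that $\ep^{-\delta}i/2\notin\eta([0,1])$ with polynomially high probability. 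Any unconstrained path of cells from $0$ to a point outside $\eta([0,1])$ must pass through $\ol\bdy_\ep(0,1]$, so the constrained distance is automatically bounded. This sidesteps the deterministic-versus-random mismatch entirely, whereas your route never closes it.
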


\begin{proof}[Proof of Theorem~\ref{thm-chi} assuming Propositions~\ref{thm1.15modified} and~\ref{prop4.4modified}]
Propositions~\ref{thm1.15modified} and~\ref{prop4.4modified} imply that, for $\ep$ sufficiently small,
\[
\ep^{-\chi+\zeta} \leq \op{dist}\left( \ep , \ol{\bdy}_{\ep}(0,1]; \mcl G^\ep|_{(0,1]} \right) \leq  \ep^{- 1/(d - \zeta)}
\]
on some positive probability event. We deduce that $\chi-\zeta \leq 1/(d-\zeta)$ for each $\zeta \in (0,1)$, whence $\chi  \leq 1/d$.   Together with Lemma~\ref{lem-obvious-direction}, this gives $\chi = 1/d$.
The upper bound in~\eqref{eqn-chi} is immediate from the fact that $\chi = 1/d$ and~\eqref{eqn-dist-upper0}. 

We now need to prove the lower bound in~\eqref{eqn-chi}. 
By the comparison between the mated-CRT map and the spanning-tree-weighted random planar map established in~\cite[Theorem 1.5]{ghs-map-dist},
to prove the lower bound in~\eqref{eqn-chi} it suffices to show that for the one-sided mated-CRT map with $\ep=1$, one has
\eqb \label{eqn-mated-crt-map-tail}
\BB P\left[ \op{diam}  \left(\mcl G^1|_{(0,n]} \right) < n^{1/d - \zeta} \right]  = O_n(n^{-p}) , \quad \forall p > 0 .
\eqe 

To this end, we first note that $\eta([0,1])$ has positive probability to contain a Euclidean ball centered at $\eta(1/2)$ which lies at positive distance from zero. By this, the lower bound for distances in the whole-plane setting from~\cite[Proposition 4.6]{dg-lqg-dim}, and local absolute continuity away from the boundary (see Lemma~\ref{nikodym}), there is a constant $p > 0$ (independent of $\ep$) such that for each $\ep \in (0,1)$, 
\eqbn
\BB P\left[ \max_{x \in (\ep\BB N) \cap (0,1]} \op{dist}\left( x , \bdy \mcl G^\ep|_{(0,1]} ; \mcl G^\ep|_{(0,1]} \right) \geq \ep^{-1/d - \zeta } \right] \geq p .
\eqen
It is immediate from the Brownian motion definition~\ref{def-mated-CRT} of $\mcl G^\ep$ that for each $k \in \BB N_0$ and $n\in\BB N$, the map $\mcl G^1|_{[(k-1) n^{1-\zeta} , k n^{1-\zeta}] }$ agrees in law with $\mcl G^{n^{-(1-\zeta)} }|_{(0,1]} $. 
Hence, $\mcl G^1|_{[(k-1) n^{1-\zeta} , k n^{1-\zeta}] }$ has uniformly positive probability (independent of $n$ and $k$) to contain a vertex which lies at graph distance at least $n^{  1/d - 2\zeta}$ from its boundary. 
These maps for distinct choices of $k$ are i.i.d.\ (since the Brownian motion $Z$ has independent increments).
Hence with superpolynomially high probability there is at least one value of $k\in [1,n^\zeta]_{\BB Z}$ for which $\mcl G^1|_{[(k-1) n^{1-\zeta} , k n^{1-\zeta}] }$ contains a vertex which lies at graph distance at least $n^{  1/d - 2\zeta}$ from its boundary. If this is the case, then $\op{diam}   \left(\mcl G^1|_{(0,n]} \right) \geq n^{ 1/d - 2\zeta}$. Since $\zeta$ can be made arbitrarily small, this gives~\eqref{eqn-mated-crt-map-tail}. 
\end{proof}

The rest of the paper is devoted to proving Propositions~\ref{thm1.15modified} and~\ref{prop4.4modified}. 

\begin{remark} 
We note that our proofs of Propositions~\ref{thm1.15modified} and~\ref{prop4.4modified} require different formulations of the law of $\mcl G^{\ep}$. To prove Proposition~\ref{thm1.15modified}, we want to define $\mcl G^{\ep}$ in terms of Definition~\ref{def-mated-CRT}, because our proof involves looking at variants of the Brownian motion $(L,R)$ in Definition~\ref{def-mated-CRT} and analyzing the corresponding maps and their relationship to $\mcl G^{\ep}$.  On the other hand, to prove Proposition~\ref{prop4.4modified},  we  want to define $\mcl G^{\ep}$ in terms of Definition~\ref{gdef}, because we want to compare distances in $\mcl G^{\ep}$ with distances in other maps of the form defined in Definition~\ref{gdef}, such as that considered in Proposition~\ref{prop4.4} above. 
\end{remark}

\section{Proof of Proposition \ref{thm1.15modified} from Lemma \ref{thm1.15}}
\label{proofpart1}

In this section we prove Proposition \ref{thm1.15modified} from Lemma \ref{thm1.15}. To make the proof a bit easier to read, we slightly modify the statement of Proposition~\ref{thm1.15modified} to the following equivalent assertion (the equivalence follows from the reversibility of Brownian motion). Note that $\ep \lfloor \ep^{-1} \rfloor$ is the rightmost vertex of $\mcl G^\ep|_{(0,1]}$.

\begin{prop}
In the notation of Definitions~\ref{def-mated-CRT} and~\ref{def-boundary}, for each $\zeta \in (0,1)$,  
\eqbn
 \BB P\left[ \op{dist}\left(   \ul{\bdy}_{\ep}(0,1] , \ep \lfloor \ep^{-1} \rfloor ; \mcl G^\ep|_{(0,1]} \right) \geq \ep^{-\chi + \zeta} \right] \geq \ep^{o_\ep(1)} ,\quad \text{as $\ep\rta 0$}.
\eqen 
\label{thm1.15modified'}
\end{prop}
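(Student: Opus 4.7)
The plan is to reduce Proposition~\ref{thm1.15modified'} to Lemma~\ref{thm1.15} by ``absorbing'' the lower boundary into the interior of a slightly larger mated-CRT map, obtained by extending the Brownian motion $(L,R)$ backward in time. Fix a parameter $\delta > 0$ to be optimized, and extend $(L,R)$ to a standard Brownian motion on $[-\delta,1]$ by prepending an independent copy on $[-\delta,0]$ joined continuously at $t=0$. By Brownian scaling, $\mcl G^{\ep}|_{(-\delta,1]}$ has the same law as $\mcl G^{\ep/(1+\delta)}|_{(0,1]}$ after relabeling vertices, so Lemma~\ref{thm1.15} applied in this rescaled setting gives: with probability at least $\ep^{o_\ep(1)}$, the distance from the leftmost vertex $v_1$ of $\mcl G^{\ep}|_{(-\delta,1]}$ to $\ep\lfloor\ep^{-1}\rfloor$ is at least $\ep^{-\chi+\zeta'}$ for any chosen $\zeta' > \zeta$.

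On the ``absorption event'' $\mcl A_\delta := \{\inf_{[-\delta,0]}L \leq \inf_{[0,1]}L\} \cap \{\inf_{[-\delta,0]}R \leq \inf_{[0,1]}R\}$, I would show that every $v \in \ul{\bdy}_\ep(0,1]$ is adjacent in $\mcl G^{\ep}|_{(-\delta,1]}$ to some vertex $w \in \ep\BB Z \cap (-\delta,0]$. Indeed, if $v$ is (say) an $L$-boundary vertex witnessed by $s_v \in [v-\ep,v]$ with $L_{s_v} = \inf_{[0,s_v]}L$, then on $\mcl A_\delta$ the value $L_{s_v}$ is attained by $L$ somewhere on $[-\delta,0]$, and taking $s_1 := \sup\{t \leq 0 : L_t = L_{s_v}$ and $L_u \geq L_{s_v}$ for all $u \in [t,0]\}$ yields a witness of adjacency between $v$ and the vertex $w$ containing $s_1$. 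The event $\mcl A_\delta$ has probability bounded below by a positive constant when $\delta$ is of order $1$, by Brownian symmetry.

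On the intersection of the two good events, for any $v \in \ul{\bdy}_\ep(0,1]$ one can concatenate a shortest path in $\mcl G^{\ep}|_{(-\delta,0]}$ from $v_1$ to $w$, the edge $w \sim v$, and a geodesic from $v$ to $\ep\lfloor\ep^{-1}\rfloor$ in $\mcl G^{\ep}|_{(0,1]}$, producing a path in $\mcl G^{\ep}|_{(-\delta,1]}$ of length at most $\op{dist}(v_1,w;\,\mcl G^{\ep}|_{(-\delta,0]}) + 1 + \op{dist}(v, \ep\lfloor\ep^{-1}\rfloor;\,\mcl G^{\ep}|_{(0,1]})$. Combining with the Lemma~\ref{thm1.15} lower bound gives
\[
\op{dist}(v, \ep\lfloor\ep^{-1}\rfloor;\,\mcl G^{\ep}|_{(0,1]}) \geq \ep^{-\chi+\zeta'} - \op{dist}(v_1,w;\,\mcl G^{\ep}|_{(-\delta,0]}) - 1,
\]
and since $v \in \ul{\bdy}_\ep(0,1]$ was arbitrary, one hopes to conclude Proposition~\ref{thm1.15modified'} for some $\zeta \in (0,\zeta')$.

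The hard part will be controlling the subtracted term $\op{dist}(v_1,w;\,\mcl G^{\ep}|_{(-\delta,0]})$. A worst-case bound gives the diameter of $\mcl G^{\ep}|_{(-\delta,0]}$, which by the known estimate~\eqref{eqn-dist-upper0} combined with Brownian scaling is of order $\delta^{\chi}\ep^{-\chi}$ with high probability. For bounded $\delta$ this is of the same order as $\ep^{-\chi}$ and thus swamps the Lemma~\ref{thm1.15} bound; on the other hand, taking $\delta$ polynomial in $\ep$ reduces the diameter but degrades $\Pr[\mcl A_\delta]$ to a polynomial in $\ep$, which is weaker than $\ep^{o_\ep(1)}$. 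The resolution will likely require a refined choice of $w$ so that $\op{dist}(v_1, w)$ (as opposed to the full diameter) can be controlled, perhaps by exploiting that on $\mcl A_\delta$ the witness $s_1$ can be arranged to be close to $-\delta$ along a typical excursion of $L$ on $[-\delta,0]$, or alternatively by iterating the construction at a sequence of scales and combining the resulting bounds.
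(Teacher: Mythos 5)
There is a genuine gap, and it is exactly where you stop: the two difficulties you flag at the end are not technicalities to be patched later, they are the heart of the matter, and your construction as set up cannot resolve them. First, even before the quantitative dilemma, your argument requires the event of Lemma~\ref{thm1.15} (applied to the extended map $\mcl G^\ep|_{(-\delta,1]}$), the absorption event $\mcl A_\delta$, and the diameter upper bound for $\mcl G^\ep|_{(-\delta,0]}$ to occur \emph{simultaneously}. The Lemma~\ref{thm1.15} event has probability only $\ep^{o_\ep(1)}$, which may be smaller than any fixed constant, while $\mcl A_\delta$ has probability bounded away from $1$ (of order $\delta$ for small $\delta$) and is \emph{not} independent of the distance event, since it constrains the running minima of $(L,R)$ on $[0,1]$ as well as on $[-\delta,0]$. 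There is no union bound, independence, or FKG-type input available to rule out that the Lemma~\ref{thm1.15} event is essentially contained in $\mcl A_\delta^c$, so you cannot lower-bound the probability of the intersection at all. Second, the quantitative tension is real: with $\delta$ of constant order the subtracted term is, by~\eqref{eqn-dist-upper0}, of order $\ep^{-\chi-o_\ep(1)}$ and swamps the main term $\ep^{-\chi+\zeta'}$; with $\delta=\ep^c$ you need $(1-c)(\chi+v)<\chi-\zeta'$, i.e.\ $c\gtrsim \zeta/\chi$ bounded away from $0$, so even if the intersection problem were solved you would obtain a lower bound of the form $\ep^{c+o_\ep(1)}$ with $c$ fixed, which is strictly weaker than $\ep^{o_\ep(1)}$ and in particular insufficient for the way Proposition~\ref{thm1.15modified'} is used in the proof of Theorem~\ref{thm-chi}, where it must beat an unspecified polynomial coming from Proposition~\ref{prop4.4modified}.

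For comparison, the paper's proof is built precisely to decouple these two costs, and it does so by conditioning rather than intersecting. It works with the process conditioned to stay in the first quadrant (Brownian meander) on a slightly longer interval $(0,1+y_\ep]$ with $y_\ep\approx \ep^u$: the analog of Lemma~\ref{thm1.15} for the conditioned map is obtained through an exact absolute-continuity computation (Pitman's theorem plus Imhof's relation, Proposition~\ref{structure} and Corollary~\ref{cor-ghat}); under the conditioning, the lower boundary of the sub-map beyond $y_\ep$ concentrates in a short interval near $y_\ep$ (Lemmas~\ref{prop-bm-inf} and~\ref{prop-inf-times-poly}), so the subtracted term in the triangle inequality is only $\ep^{-(1-\beta u)(\chi+v)}$, of strictly smaller order than the main term; and the transfer back to the unconditioned law of $Z$ is done by conditioning on the initial segment $\wt Z^\ep|_{[0,y_\ep]}$, noting that the conditional law of the remainder is that of $Z$ conditioned on an explicit event of probability at least $\ep^{u(1+u)}$, a cost whose exponent tends to $0$ as $u\rta 0$ independently of $\zeta$. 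This is exactly the decoupling your $\mcl A_\delta$ route lacks: in your setup the probability cost and the size of the subtracted diameter are tied to the same parameter $\delta$, forcing the exponent loss to stay bounded below by a constant depending on $\zeta$. If you want to salvage your approach you would need to replace the intersection with a conditioning (or change-of-measure) step on the prepended segment and find a mechanism forcing the relevant boundary times to concentrate near the left endpoint; at that point you have essentially reconstructed the paper's argument.
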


We first analyze the case when $ \ul{\bdy}_{\ep}(0,1] = \{\ep\}$  is a single point, corresponding to the first $\ep$-length interval $[0,\ep]$ of the positive real axis.  This happens in particular when $L$ and $R$ are each non-negative on $[0,1]$.  To this end, we define $\wh{\mcl{G}}^\ep$ as we defined the one-sided mated-CRT map in Definition~\ref{def-mated-CRT} but with $(L,R)$ replaced by a pair $\wh Z = (\wh L, \wh R)$ of independent Brownian meanders started from $0$---i.e., $\wh Z$ is a standard two-dimensional Brownian motion conditioned to stay in the first quadrant until time $1$. We also define the subgraphs $\wh{\mcl{G}}^\ep|_I$ for intervals $I \subset (0,\infty)$ as we did for $\mcl{G}^\ep$ in Definition~\ref{def-mated-CRT}.
One has the following miracle (which is not true for any $\gamma \not=\sqrt 2$, in which case the coordinates of $Z$ are correlated).
 
\begin{prop}
For $\gamma=\sqrt 2$, the law of the mated-CRT map $\wh{\mcl{G}}^{\ep}|_{(0,1]}$ associated with $\wh Z$ is absolutely continuous with respect to the law of $\mcl{G}^{\ep}|_{(0,1]}$. The Radon-Nikodym derivative of the law of the former map with respect to the law of the latter map agrees in law with $\frac{\pi}{2} (X_1^L X_1^R)^{-1} $, where $X^L$ and $X^R$ are independent three-dimensional Bessel processes.
\label{structure}
\end{prop}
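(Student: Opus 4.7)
The plan is to exploit the fact that at $\gamma = \sqrt 2$ the two encoding Brownian motions $L$ and $R$ are independent, and then to apply Imhof's classical identity between the Brownian meander and the 3D Bessel process to each coordinate. First, I would observe that $\mcl G^\ep|_{(0,1]}$ (respectively $\wh{\mcl G}^\ep|_{(0,1]}$) is a measurable function of $(L,R)|_{[0,1]}$ (respectively $(\wh L,\wh R)|_{[0,1]}$), because for vertices $x_1,x_2 \in \ep\BB N\cap(0,1]$ the adjacency condition of Definition~\ref{def-mated-CRT} only involves times in $[0,1]$. Since $L,R$ are independent for $\gamma=\sqrt 2$ and the conditioning defining the two-dimensional Brownian meander factors across independent coordinates, $\wh L$ and $\wh R$ are themselves independent one-dimensional Brownian meanders.

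Next, I would apply Imhof's theorem coordinate-wise: the law of a one-dimensional Brownian meander on $[0,1]$ is absolutely continuous with respect to the law of a 3D Bessel process $X$ on $[0,1]$, with Radon-Nikodym derivative $\sqrt{\pi/2}/X_1$. Combining over the two independent coordinates, the joint law of $(\wh L,\wh R)$ is absolutely continuous with respect to the joint law of an independent pair $(X^L,X^R)$ of 3D Bessel processes, with Radon-Nikodym derivative $(\pi/2)/(X_1^L X_1^R)$. To conclude, I would identify the law of the mated-CRT map built from $(X^L,X^R)$ via the adjacency rule of Definition~\ref{def-mated-CRT} with the law of $\mcl G^\ep|_{(0,1]}$ built from the Brownian pair $(L,R)$; pushing the Imhof density through this identification then produces a Radon-Nikodym derivative between $\wh{\mcl G}^\ep|_{(0,1]}$ and $\mcl G^\ep|_{(0,1]}$ whose distribution matches $(\pi/2)/(X_1^L X_1^R)$.

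The hard part is the distributional identification between the Brownian-based and the Bessel(3)-based one-sided mated-CRT maps. Pitman's theorem $L_t \mapsto L_t - 2\inf_{s\le t}L_s$ converts a Brownian motion into a 3D Bessel process, and a short computation shows that every adjacency in the Brownian map is mapped to an adjacency in the Bessel map, but the image transformation can also introduce spurious adjacencies, so the two maps are not equal pointwise. Therefore one needs a genuinely distributional argument. A promising route is to check matching of characteristic functionals of the two maps: for example, by Pitman's theorem the boundary length $L_1 - 2\inf_{[0,1]} L$ of the Brownian-based map is distributed as $X_1$ of the Bessel-based map, and one can try to extend such identities to a matching of the full adjacency structure of $\mcl G^\ep|_{(0,1]}$ (for instance, by examining the joint law of the tree of excursions above the running infimum, together with the total local time at that infimum, in each case).
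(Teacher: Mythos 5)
Your first half runs along the same lines as the paper: for $\gamma=\sqrt 2$ the coordinates $L,R$ are independent, the meander pair $(\wh L,\wh R)$ factors coordinate-wise, and Imhof's identity gives absolute continuity of $(\wh L,\wh R)|_{[0,1]}$ with respect to an independent pair $(X^L,X^R)$ of $3$-dimensional Bessel processes with density $\frac{\pi}{2}(X_1^L X_1^R)^{-1}$. The genuine gap is the step you yourself flag as ``the hard part'': identifying the law of the mated-CRT map built from $(X^L,X^R)$ with the law of $\mcl G^\ep|_{(0,1]}$. You do not prove this; you only propose a speculative program (matching ``characteristic functionals,'' boundary lengths, excursion trees) that is never carried out and is far from routine, so as written the proposition is not established. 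Moreover, the obstruction you describe is a red herring created by comparing the wrong pair of maps: you try to relate the Bessel-based map to the map built from $(L,R)$ with the same running-infimum convention, and indeed that comparison fails pointwise (in fact an adjacency for $(L,R)$ need not even persist for $(2S^L-L,2S^R-R)$, since the running maxima $S^L_{s_1},S^L_{s_2}$ may differ even when $\inf_{[s_1,s_2]}L=L_{s_1}=L_{s_2}$).

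The clean resolution, which is the paper's key observation, is that no distributional argument is needed at all: with $X^L=2S^L-L$ (Pitman), the adjacency condition
\[
\inf_{r\in[s_1,s_2]}\left(2S^L_r-L_r\right)=2S^L_{s_1}-L_{s_1}=2S^L_{s_2}-L_{s_2}
\]
is \emph{equivalent} to $\sup_{r\in[s_1,s_2]}L_r=L_{s_1}=L_{s_2}$. Indeed, either condition forces $S^L_{s_1}=S^L_{s_2}$ (if $S^L_{s_1}<S^L_{s_2}$, take $r\in[s_1,s_2]$ with $L_r=S^L_r<S^L_{s_2}$; then $2S^L_r-L_r=S^L_r<2S^L_{s_2}-L_{s_2}$, so the infimum condition fails), and once $S^L$ is constant on $[s_1,s_2]$ the two conditions are literally the same statement. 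Hence the mated-CRT map built from $(X^L,X^R)$ is almost surely \emph{identical} to the map built from $(-L,-R)$ via Definition~\ref{def-mated-CRT}, and since $(-L,-R)\eqD(L,R)$ this map has the law of $\mcl G^\ep$; pushing the Imhof density through then finishes the proof exactly as you intended. Without this (or some substitute for your unproven ``distributional identification''), your argument does not close.
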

\begin{proof}
Let $ (L,R)$ be a pair of independent standard linear Brownian motions and for $t\geq 0$ define $S_t^L = \max_{0 \leq s \leq t} L_s$ and $S_t^R :=  \max_{0 \leq s \leq t} R_s$.
By~\cite[Theorem 1.3]{pitman-bessel}, the processes $X^L := 2S^L - L$ and $X^R = 2S^R - R$ are independent 3-dimensional Bessel processes. 
By the equation just before~\cite[Corollary 2]{imhof-factorization}, applied to each of $\wh L$ and $\wh R$, the law of the pair of meanders $(\wh L ,\wh R)|_{[0,1]}$ is absolutely continuous with respect to the law of $(X^L , X^R)|_{[0,1]}$ with Radon-Nikodym derivative $\frac{\pi}{2} (X_1^L X_1^R)^{-1} $. 
Hence, if we define $\mcl G^X$ as in Definition~\ref{def-mated-CRT} with $(X^L,X^R)$ in place of $(L,R)$, then the law of $\wh{\mcl G}|_{(0,1]}$ is absolutely continuous with respect to the law of $\mcl G^X|_{(0,1]}$, with Radon-Nikodym derivative $\frac{\pi}{2} (X_1^L X_1^R)^{-1} $.

We will now complete the proof by showing that $\mcl G^X$ agrees in law with $\mcl G$. 
In fact, we will show that $\mcl G^X$ is a.s.\ identical to the mated-CRT map associated with $(-L,-R)$. 
By Definition~\ref{def-mated-CRT}, it is enough to show that, for $x_1,x_2 \in \ep \BB{N}$ with $x_1 < x_2$ and $s_j \in [x_j-\ep,x_j]$, for $j\in \{1,2\}$, the conditions
\begin{equation}
\inf_{r \in [s_1,s_2]} (2S_r^L - L_r) = 2S_{s_1}^L - L_{s_1} = 2 S_{s_2}^L - L_{s_2}
\label{2s-b}
\end{equation}
and
\begin{equation}
\sup_{r \in [s_1,s_2]} L_r =  L_{s_1} = L_{s_2}
\label{b}
\end{equation}
are equivalent (by symmetry, the same is true with $R$ in place of $L$). 
 Indeed, we claim that if either~\eqref{2s-b} and~\eqref{b} holds, then $S_{s_1}^L$ and $S_{s_2}^L$ must be equal.  This is clearly true if~\eqref{b} holds.  Moreover, if $S_{s_1}^L < S_{s_2}^L$, then, choosing $r \in [s_1,s_2]$ such that $L_r = S_r^L < S_{s_2}^L$, we have $2 S_r^L - L_r = S^L_r < 2 S^L_{s_2} - L_{s_2}$; so~\eqref{2s-b} fails. Hence~\eqref{2s-b} and~\eqref{b} are equivalent.
 \end{proof}

\begin{cor}
For each $\zeta \in (0,1)$, the $\wh{\mcl G}^\ep|_{(0,1]}$-distance between the leftmost and rightmost vertices satisfies
\eqb \label{eqn-ghat}
\BB{P}\left[\op{dist}\left(\ep, \ep \lfloor \ep^{-1} \rfloor ; \wh{\mcl{G}}^{\ep}|_{(0,1]}\right) \geq \ep^{-\chi + \zeta} \right] \geq \ep^{o_{\ep}(1)} ,\quad \text{as $\ep\rta 0$} .
\eqe
\label{cor-ghat}
\end{cor}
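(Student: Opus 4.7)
The plan is to deduce Corollary~\ref{cor-ghat} from Lemma~\ref{thm1.15} and Proposition~\ref{structure} by a standard Radon--Nikodym truncation argument; the only delicate point is choosing the truncation level correctly.

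First I would rewrite Lemma~\ref{thm1.15} in the language of the one-sided mated-CRT map. Under the isomorphism $x \mapsto \eta([x-\ep,x])$ between $\mcl G^\ep_{\BB H,h,\eta}$ and $\mcl G^\ep$ (discussed just after Definition~\ref{gdef}), the cells containing $\eta(0)$ and $\eta(1)$ correspond respectively to the leftmost and rightmost vertices of $\mcl G^\ep|_{(0,1]}$, so Lemma~\ref{thm1.15} becomes
\[
\BB P\!\left[\op{dist}\!\left(\ep,\ep\lfloor\ep^{-1}\rfloor;\mcl G^\ep|_{(0,1]}\right) \geq \ep^{-\chi+\zeta}\right] \geq \ep^{o_\ep(1)}, \qquad \ep\to 0. \quad(\star)
\]
Call this event $E_\ep$. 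Since $E_\ep$ is a function of the isomorphism class of the graph, the event in~\eqref{eqn-ghat} is simply $E_\ep$ evaluated on $\wh{\mcl G}^\ep|_{(0,1]}$ rather than on $\mcl G^\ep|_{(0,1]}$.

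Next I would invoke Proposition~\ref{structure}: the law of $\wh{\mcl G}^\ep|_{(0,1]}$ is absolutely continuous with respect to the law of $\mcl G^\ep|_{(0,1]}$ with a Radon--Nikodym derivative $f$ whose distribution agrees with that of $\tfrac{\pi}{2}(X_1^L X_1^R)^{-1}$, where $X^L, X^R$ are independent $3$-dimensional Bessel processes. For any $\delta>0$, the standard truncation bound gives
\[
\BB P\!\left[\op{dist}\!\left(\ep,\ep\lfloor\ep^{-1}\rfloor;\wh{\mcl G}^\ep|_{(0,1]}\right) \geq \ep^{-\chi+\zeta}\right] = \BB E\!\left[f\,\mathbbm{1}_{E_\ep}\right] \geq \delta\bigl(\BB P[E_\ep] - \BB P[f<\delta]\bigr).
\]
The quantitative input I need is that $\BB P[f<\delta] = \BB P[X_1^L X_1^R > \tfrac{\pi}{2\delta}]$ decays super-polynomially as $\delta\to 0$; this follows from the fact that $X_1^L$ and $X_1^R$ are distributed as norms of $3$-dimensional standard Gaussians and hence have Gaussian upper tails, so a union bound on $\{X_1^L>\sqrt{t}\}\cup\{X_1^R>\sqrt{t}\}$ gives $\BB P[X_1^L X_1^R > t] = \ep^{\omega(1)}$ whenever $t\to\infty$ faster than polylog$(\ep^{-1})$.

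The whole trick is then to pick $\delta$ slowly decaying so that the multiplicative loss $\delta$ does not kill the $\ep^{o_\ep(1)}$ lower bound in $(\star)$: taking $\delta = (\log \ep^{-1})^{-2}$, which is itself $\ep^{o_\ep(1)}$, the tail term $\BB P[f<\delta]$ is superpolynomially small and is dominated by $\tfrac12 \BB P[E_\ep]$, so the displayed inequality yields the desired bound $\delta\cdot\ep^{o_\ep(1)} = \ep^{o_\ep(1)}$. I do not anticipate any significant obstacle: the only real content beyond the two inputs is the remark that the Bessel (equivalently, Gaussian) tail is much stronger than any polynomial, which is exactly what allows the truncation level to be taken logarithmically slow rather than polynomial in $\ep$.
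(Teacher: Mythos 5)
Your proposal is correct and follows essentially the same route as the paper: identify the event from Lemma~\ref{thm1.15} with the corresponding event for $\mcl G^\ep|_{(0,1]}$, then transfer to $\wh{\mcl G}^\ep|_{(0,1]}$ via the Radon--Nikodym derivative of Proposition~\ref{structure}, truncated at a level whose Bessel/Gaussian tail is superpolynomially small. The only cosmetic difference is the truncation level: the paper cuts at $\ep^\delta$ and sends $\delta\to 0$ at the end, while you cut at $(\log\ep^{-1})^{-2}$ directly; both give the same $\ep^{o_\ep(1)}$ bound.
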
 
\begin{proof}
Let 
\eqbn
E^\ep := \left\{ \op{dist}\left(\ep, \ep \lfloor \ep^{-1} \rfloor ;  \mcl{G}^{\ep}|_{(0,1]}\right) \geq \ep^{-\chi + \zeta} \right\} .
\eqen
By Lemma~\ref{thm1.15} and the equivalence of the Brownian and SLE/LQG representations of the mated-CRT map, we have $\BB P[E^\ep] \geq \ep^{o_\ep(1)}$. 
By Proposition~\ref{structure}, the probability of the event in~\eqref{eqn-ghat} is given by $\frac{\pi}{2} \BB E[\BB 1_{E^\ep} (X_1^L X_1^R)^{-1} ]$ for a certain pair of independent 3-dimensional Bessel processes $(X^L,X^R)$. 
Since a 3-dimensional Bessel process has the law of the modulus of a 3-dimensional Brownian motion, for $\delta >0$ one has $\BB P[(X_1^L X_1^R)^{-1} < \ep^\delta] \leq O_\ep(\ep^p)$ for all $p > 0$. Therefore,
\eqb
\BB E\left[\BB 1_{E^\ep} (X_1^L X_1^R)^{-1} \right] \geq \ep^\delta \BB P[E^\ep] - O_\ep(\ep^p) \geq \ep^{\delta + o_\ep(1)} .
\eqe
Since $\delta$ can be made arbitrarily small, this concludes the proof. 
\end{proof}

We now deduce Proposition~\ref{thm1.15modified'} from Corollary~\ref{cor-ghat}. Roughly, the idea of the proof is as follows.\footnote{The rest of this section is very similar to an argument which appeared in an earlier version of~\cite{ghs-dist-exponent}, but was removed in the final, published version.} If we condition on $\wh Z|_{[0, \ep^\zeta]}$ for a small but fixed $\zeta>0$, then the conditional law of $(\wh Z_{\ep^\zeta+\cdot} - \wh Z_{\ep^\zeta})|_{[0,1-\ep^\zeta]}$ is the same as the conditional law of $  Z|_{[0 , 1-\ep^\zeta ]}$ given that 
\eqb \label{eqn-thm1.15-cond}
\inf_{t\in [0,1-\ep^\zeta]} L_t \geq -\wh L_{\ep^\zeta} \quad \text{and} \quad  \inf_{t\in [0,1-\ep^\zeta]} R_t \geq -\wh R_{\ep^\zeta} .
\eqe
With high probability, $ \wh L_{\ep^\zeta}$ and $\wh R_{\ep^\zeta}$ are at least $\ep^{\zeta(1-\zeta)/2}$. This enables us to lower-bound the probability of the event in~\eqref{eqn-thm1.15-cond} and thereby compare the conditional law of $(\wh Z_{\ep^\zeta+\cdot} - \wh Z_{\ep^\zeta})|_{[0,1-\ep^\zeta]}$ given $\wh Z|_{[0, \ep^\zeta]}$ to the unconditional law of $  Z|_{[0 , 1-\ep^\zeta ]}$ up to multiplicative errors of the form $\ep^\zeta$ (note that $\zeta$ is small, so we can think of these as subpolynomial errors). This, in turn, allows us to compare distances in $\wh{\mcl G}^\ep|_{[\ep^\zeta,1]}$ and $\mcl G^\ep|_{[\ep^\zeta,1]}$. 
Furthermore, by~\cite[Theorem 1.15]{ghs-dist-exponent}, it is likely that every point on the lower boundary of $\wh{\mcl G}^\ep|_{[\ep^\zeta , 1]}$ (defined as in Definition~\ref{def-boundary}) is close (in the sense of $\wh{\mcl G}^\ep$-graph distances) to the initial vertex $\ep$. By the triangle inequality, this means that the distance in $\wh{\mcl G}^\ep$ from this lower boundary to 1 is close to the distance in $\wh{\mcl G}^\ep$ from $\ep$ to 1. 
This leads to a lower bound for the distance in $\mcl G^\ep|_{[\ep^\zeta,1]}$ from the lower boundary of $\mcl G^\ep|_{[\ep^\zeta,1]}$ to 1  in terms of the distance in $\wh{\mcl G}^\ep$ from $\ep$ to $1$ which holds with probability at least $\ep^{o_\ep(1)}$, as required. 

For the proof we will need two elementary Brownian motion lemmas.

\begin{lem} \label{prop-bm-inf}
Let $B$ be a standard linear Brownian motion and let $\ul t$ be the time at which $B$ attains its minimum on the interval $[0,1]$. For each $\delta > 0$ and each $\zeta \in (0,1)$,  
\eqbn
\BB P\left[ \ul t > \delta^{2-\zeta} \,|\, \inf_{s\in [0,1]} B_s \geq -\delta  \right] \preceq \delta^{\zeta/2} ,
\eqen
with the implicit constant depending only on $\zeta$. 
\end{lem}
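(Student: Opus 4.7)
The plan is to estimate the numerator of the conditional probability directly by conditioning on the Brownian path at the intermediate time $t := \delta^{2-\zeta}$. Since the reflection principle gives $\BB P[\inf_{s \in [0,1]} B_s \geq -\delta] = 2\Phi(\delta) - 1 \asymp \delta$, it suffices to show
\[
\BB P\left[ \ul t > t,\; \inf_{s \in [0,1]} B_s \geq -\delta \right] \preceq \delta^{1 + \zeta/2}.
\]
Write $M^{[a,b]} := \inf_{s \in [a,b]} B_s$. By a.s.\ uniqueness of the argmin, on the event $\{\ul t > t\} \cap \{M^{[0,1]} \geq -\delta\}$ we have $M^{[0,t]} \geq -\delta$ and $M^{[t,1]} \in [-\delta , M^{[0,t]})$, so the previous display reduces to a bound on the probability of this joint event.

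The Markov property at time $t$ gives, conditionally on $(B_t , M^{[0,t]}) = (x , m_1)$ with $x \geq m_1 \geq -\delta$, that $M^{[t,1]}$ is distributed as $x + M^{[0,1-t]}_{\wt B}$ for an independent Brownian motion $\wt B$ with $\wt B_0 = 0$. By the reflection principle, $M^{[0,1-t]}_{\wt B}$ has density $\sqrt{2/(\pi(1-t))}\,e^{-a^2/(2(1-t))}$ on $(-\infty , 0]$, which is bounded by a universal constant for $t \leq 1/2$. Since $m_1 - x \leq 0$, the translated interval $[-\delta - x , m_1 - x)$ lies in $(-\infty , 0]$ and has length $m_1 + \delta$, so
\[
\BB P\bigl[ M^{[t,1]} \in [-\delta , m_1) \,\big|\, B_t = x,\, M^{[0,t]} = m_1 \bigr] \leq C (m_1 + \delta) .
\]
Taking expectation over $(B_t , M^{[0,t]})$ then yields $\BB P[\ul t > t ,\, M^{[0,1]} \geq -\delta] \leq C \, \BB E\bigl[ \max(M^{[0,t]} + \delta , 0) \bigr]$.

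To finish, I will estimate $\BB E[\max(M^{[0,t]} + \delta , 0)]$ by another application of the reflection principle: $M^{[0,t]} \overset{\mathcal{L}}{=} -|B_t| \overset{\mathcal{L}}{=} -\sqrt{t}\,|N|$ for $N \sim \mcl N(0,1)$, so
\[
\BB E\bigl[ \max(M^{[0,t]} + \delta , 0) \bigr] = \int_0^{\delta/\sqrt{t}} (\delta - \sqrt{t}\,u)\sqrt{2/\pi}\, e^{-u^2/2} \, du \leq \frac{\delta^2}{\sqrt{2\pi t}} ,
\]
and substituting $t = \delta^{2-\zeta}$ gives $\BB E[\max(M^{[0,t]} + \delta , 0)] \preceq \delta^{1 + \zeta/2}$, completing the argument. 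The computation is a routine Brownian motion exercise and I do not foresee any serious obstacle; the one substantive observation is that the scale $\sqrt{t} = \delta^{1 - \zeta/2}$ at the truncation time is large compared to $\delta$, so that the event $\{M^{[0,t]} \geq -\delta\}$ costs an extra factor of $\delta / \sqrt{t} = \delta^{\zeta/2}$ beyond the order-$\delta$ probability of $\{M^{[0,1]} \geq -\delta\}$, which is precisely the exponent appearing in the conclusion.
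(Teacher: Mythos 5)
Your argument is correct, and after the common first reduction (dividing by $\BB P[\inf_{[0,1]}B \geq -\delta] \asymp \delta$ and bounding the joint probability by $\delta^{1+\zeta/2}$) it takes a genuinely different route from the paper's. The paper splits according to whether $B$ returns to $0$ after time $\delta^{2-\zeta}$ (the complementary event, staying in $[-\delta,0]$, being superpolynomially unlikely), applies the strong Markov property at the first return time $\sigma$, bounds the pre-$\delta^{2-\zeta}$ constraint by $\delta^{\zeta/2}$ and the post-$\sigma$ constraint by $\preceq \delta(1-\sigma)^{-1/2}$, and then integrates over the law of $\sigma$ via a dyadic decomposition of $1-\sigma$. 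You instead condition at the deterministic time $t=\delta^{2-\zeta}$, note that on the event the minimum over $[t,1]$ must land in the window $[-\delta, M^{[0,t]})$ of length $M^{[0,t]}+\delta$, use the explicit bounded density of the running minimum of the post-$t$ increment (valid once $t\le 1/2$, and the statement is trivial for $\delta$ bounded away from $0$), and finish with $\BB E[\max(M^{[0,t]}+\delta,0)] \preceq \delta^2/\sqrt{t} = \delta^{1+\zeta/2}$ — which packages both factors at once, since $\max(M^{[0,t]}+\delta,0)\le \delta\,\BB 1_{\{M^{[0,t]}\ge -\delta\}}$ already exhibits the $\delta\cdot\delta^{\zeta/2}$ structure. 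Your version is shorter and avoids the stopping time, the superpolynomial estimate, and the dyadic sum, at the price of relying on the explicit density of the Brownian minimum; the paper's strong-Markov template is the one that would survive in settings where such exact densities are unavailable. Either proof suffices for the way the lemma is used (via Lemma \ref{prop-inf-times-poly}).
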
  
\begin{proof} 
We have $\BB P\left[ \inf_{t\in [0,1]} B_t \geq -\delta \right] \asymp \delta$, so it suffices to show that 
\eqb
\BB P\left[  \inf_{t\in [0,1]} B_t \geq -\delta , \ul t > \delta^{2-\zeta} \right]  \preceq \delta^{1+\zeta/2} .
\eqe
Using the Markov property of Brownian motion, it is easily seen that \[\BB P\left[B_t \in [-\delta,0] \,: \forall t \in [\delta^{2-\zeta} , 1] \right]\] decays faster than any positive power of $\delta$. 
It therefore suffices to show that 
\eqb
\BB P\left[  \inf_{t\in [0,1]} B_t \geq -\delta , \sup_{t\in [\delta^{2-\zeta} ,1]} B_t > 0 , \ul t > \delta^{2-\zeta} \right]  \preceq \delta^{1+\zeta/2} .
\eqe

To this end, let  $\sigma$ be the first time $t\geq \delta^{2-\zeta}$ for which $B_t = 0$. We observe that $B_{\ul t} \leq 0$, so if $\ul t \geq \delta^{2-\zeta}$ and $B_t > 0$ for some $t\in [\delta^{2-\zeta},1]$, then there is some $t \in [\delta^{2-\zeta} , 1)$ for which $B_{t} = 0$ and hence $\sigma  < 1$. 
Therefore,
\alb
&\BB P\left[  \inf_{t\in [0,1]} B_t \geq -\delta , \sup_{t\in [\delta^{2-\zeta} ,1]} B_t > 0 , \ul t > \delta^{2-\zeta} \right]  \notag \\
&\qquad \leq \BB P\left[  \inf_{t\in [0,\delta^{2-\zeta} ]} B_t \geq -\delta   \right] 
 \BB P\left[ \inf_{t\in [\sigma,1]} B_t \geq -\delta , \, \sigma < 1 \,|\,  \inf_{t\in [0,\delta^{2-\zeta} ]} B_t \geq -\delta  \right]  .
\ale
We have $ \BB P\left[  \inf_{t\in [0,\delta^{2-\zeta} ]} B_t \geq -\delta   \right] \preceq \delta^{\zeta/2}$. 
It therefore suffices to show that
\eqb \label{eqn-bm-show}
\BB P\left[ \inf_{t\in [\sigma,1]} B_t \geq -\delta , \, \sigma < 1  \,|\,  \inf_{t\in [0,\delta^{2-\zeta} ]} B_t \geq -\delta \right] \preceq \delta .
\eqe 

Intuitively,~\eqref{eqn-bm-show} follows from the fact that $\sigma$ is typically of order $\delta^{2-\zeta}$, and the probability that a Brownian motion stays positive on $[0,1-\delta^{2-\zeta}]$ is comparable to the probability that it stays positive on $[0,1]$. To be more precise, the Markov property implies that on the event $\{\sigma <1\}$, 
\eqb \label{eqn-cond-on-sigma}
\BB P\left[ \inf_{t\in [\sigma,1]} B_t \geq -\delta \,|\, B|_{[0,\sigma]} \right] \preceq \delta (1-\sigma)^{-1/2} .
\eqe 
The conditional law of $\sigma$ given that $\{\inf_{t\in [0,\delta^{2-\zeta} ]} B_t \geq -\delta\}$ has a continuous density with respect to Lebesgue measure which is bounded above by a $\delta$-independent constant in $[1/2,\infty)$. 
Consequently, for $k \in \BB N_0$, 
\eqb \label{eqn-sigma-law}
\BB P\left[ \sigma \in [1 - 2^{-k } , 1 - 2^{-k-1} ] \,|\,  \inf_{t\in [0,\delta^{2-\zeta} ]} B_t \geq -\delta \right]  \preceq 2^{-k } .
\eqe 
Combining~\eqref{eqn-cond-on-sigma} with~\eqref{eqn-sigma-law} and summing over $k$ shows that
\eqbn
\BB P\left[ \inf_{t\in [\sigma,1]} B_t \geq -\delta , \, \sigma < 1  \,|\,  \inf_{t\in [0,\delta^{2-\zeta} ]} B_t \geq -\delta \right] 
\preceq \delta \sum_{k=0}^\infty 2^{-k/2} \preceq \delta ,
\eqen
as required. 
\end{proof}
 
Recall that $\wh L$ and $\wh R$ are the independent 3d Bessel processes used to define $\wh{\mcl G}^\ep$.
 
\begin{lem} \label{prop-inf-times-poly}
For $\delta \in (0,1)$, let $\wh t^L_\delta$ (resp. $\wh t^R_\delta$) be the time at which $\wh L $ (resp. $\wh R $) attains its minimum value on the interval $[\delta,1]$. For each $\beta \in (0,1)$, it holds with polynomially high probability as $\delta\rta 0$ that $\wh t^L_\delta \leq \delta^\beta$ and $ \wh t^R_\delta \leq \delta^\beta$. 
\end{lem}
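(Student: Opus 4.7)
The approach is to reduce to a one-dimensional statement, compare the meander with a $3$-dimensional Bessel process via Imhof factorization, and then exploit Brownian scaling together with the explicit infimum distribution for Bessel-$3$.

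By independence of $\wh L$ and $\wh R$ and a union bound, it suffices to show that $\BB P[\wh t^L_\delta > \delta^\beta] = O(\delta^p)$ for some $p > 0$. Next, the same tool used in the proof of Proposition~\ref{structure} (namely, the Imhof factorization from \cite{imhof-factorization}) applied to one coordinate shows that the law of $\wh L|_{[0,1]}$ is absolutely continuous with respect to the law of a $3$-dimensional Bessel process $X$ started from $0$, with Radon-Nikodym derivative proportional to $X_1^{-1}$ (this follows by marginalizing the two-dimensional R-N derivative in Proposition~\ref{structure}, using independence of $X^L$ and $X^R$). Since $X_1$ has density $\propto x^2 e^{-x^2/2}$ on $(0,\infty)$, one has $\BB E[X_1^{-q}] < \infty$ for every $q \in (1,3)$, so H\"older's inequality reduces matters to showing that the event
\[ A_\delta := \left\{ \text{the argmin of } X \text{ on } [\delta,1] \text{ exceeds } \delta^\beta \right\} \]
has polynomially small probability as $\delta \to 0$.

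To control $\BB P[A_\delta]$ I apply Brownian scaling: the process $Y_s := X_{\delta s}/\sqrt\delta$ is again a $3$-dimensional Bessel process from $0$, and under this scaling $A_\delta$ becomes the event that the argmin of $Y$ on $[1, \delta^{-1}]$ exceeds $T := \delta^{\beta-1}$. On this event, $\min_{s \in [T, \delta^{-1}]} Y_s < \min_{s \in [1,T]} Y_s \leq Y_1$. By the Markov property of $Y$ and the classical Bessel-$3$ identity $\BB P[\inf_{t \geq 0} Y_t \leq a \mid Y_0 = y] = a/y$ for $a \in [0,y]$, we have $\inf_{s \geq T} Y_s \eqD U \cdot Y_T$ with $U \sim \op{Unif}(0,1)$ independent of $Y_T$, and therefore
\[ \BB P[A_\delta] \leq \BB P\left[ U \cdot Y_T < Y_1 \right] = \BB E\left[ \min\!\left( \tfrac{Y_1}{Y_T}, 1 \right) \right]. \]
Splitting according to $\{Y_T \geq T^{1/2-\eta}\}$ versus its complement for a small $\eta > 0$, and using $Y_T \eqD \sqrt{T}\cdot \chi_3$ together with the $x^2$ vanishing of the $\chi_3$ density at the origin, bounds each piece by a positive power of $T^{-1}$, hence by a positive power of $\delta$ (since $T = \delta^{\beta-1}$ and $\beta < 1$). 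Combining with the H\"older step transfers this to the meander.

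No serious obstacle is anticipated; the only point requiring any care is the finite-moment condition on the Radon-Nikodym derivative, which is what forces $q < 3$ in H\"older's inequality but is easily satisfied.
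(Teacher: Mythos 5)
Your argument is correct, but it takes a genuinely different route from the paper. The paper conditions on $\wh L_\delta$, notes that $\wh L_\delta$ is typically of order $\delta^{1/2}$, uses the meander's Markov property to identify the conditional law of $(\wh L-\wh L_\delta)|_{[\delta,1]}$ as a Brownian motion conditioned to stay above $-\wh L_\delta$, and then invokes its bespoke conditioned-Brownian-motion estimate (Lemma~\ref{prop-bm-inf}) on the location of the argmin. You instead compare the meander globally to a Bessel-3 process via the one-dimensional Imhof relation (obtained by marginalizing the two-dimensional Radon--Nikodym derivative of Proposition~\ref{structure}, which is legitimate since both the derivative and the laws factorize over the two coordinates), absorb the density through H\"older using $\BB E[X_1^{-q}]<\infty$ for $q<3$, and then handle the Bessel-3 event by scaling plus the explicit hitting probability $a/y$ and the $x^2$ vanishing of the $\chi_3$ density at the origin; the strict/non-strict inequality in $A_\delta\subset\{UY_T< Y_1\}$ is immaterial since $UY_T$ has a continuous conditional law given $(Y_1,Y_T)$. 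What your route buys is that it bypasses Lemma~\ref{prop-bm-inf} entirely (that lemma is used nowhere else in the paper) and replaces the hands-on conditioned-BM argument with closed-form Bessel-3 facts, yielding explicit polynomial exponents such as $T^{-(1/2-\eta)}+O(T^{-3\eta})$ with $T=\delta^{\beta-1}$; what the paper's route buys is that it stays entirely within conditioned Brownian motion and the meander's Markov structure, needing no absolute-continuity or moment bookkeeping, at the cost of proving the auxiliary Lemma~\ref{prop-bm-inf}. Both give the statement for every $\beta\in(0,1)$, so the conclusion matches.
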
  
\begin{proof} 
Since $\wh L$ and $\wh R$ are independent, it suffices to prove that with polynomially high probability as $\delta \rta 0$, one has $\wh t^L_\delta \leq \delta^\beta$.  
For $\zeta>0$, the probability that $\wh L_\delta$ does not belong to $[\delta^{1/2+\zeta} , \delta^{1/2 - \zeta}]$ decays polynomially in $\delta$. The conditional law of $(\wh L - \wh L_\delta)|_{[\delta ,1]}$ given $\wh L_\delta$ is that of a Brownian motion conditioned to stay above $-\wh L_\delta$. Therefore, the statement of the lemma follows from Lemma~\ref{prop-bm-inf} (applied with $\wh L_\delta$ in place of $\delta$) upon making an appropriate choice of $\zeta$ (depending on $\beta$).  
\end{proof}

\begin{proof}[Proof of Proposition~\ref{thm1.15modified'}] 
Let $\ep\in (0,1)$ and assume without loss of generality that $1/\ep\in\BB Z$, so that $\ep \lfloor \ep^{-1} \rfloor = 1$.
Also fix $u , v\in (0,1)$ to be chosen later, depending on $\zeta$. 
 Let $y_\ep := \ep \lfloor \ep^{u-1} \rfloor$, so that $y_\ep \in (0,1]_{\ep\BB Z}$. Let $\wt Z^\ep = (\wt L^\ep , \wt R^\ep)$ be a standard two-dimensional Brownian motion conditioned to stay in the first quadrant until time $1 + y_\ep$.  Note that $\wt Z^\ep$ is defined in a similar manner to $\wh Z$ but conditioned to stay in the first quadrant for slightly more than one unit of time. Define $\wt{\mcl G}^\ep$ according to Definition~\ref{def-mated-CRT} but with $Z$ replaced by $\wt Z^{\ep}$; also, define $\wt{\mcl G}^\ep|_I$ for intervals $I \subset (0,\infty)$ as in Definition~\ref{def-mated-CRT}.  The idea of the proof is to first deduce a lower bound for the distance from $1+y_\ep$ to the lower boundary of the graph $\wt{\mcl G}^\ep|_{(y_\ep , 1+y_\ep]}$ (essentially, this follows from Corollary~\ref{cor-ghat}, the upper bound of \cite[Theorem 1.15]{ghs-dist-exponent}, and the triangle inequality); then compare this latter object to $\mcl G^\ep|_{(0,1]}$ conditioned on the positive probability event that its lower boundary length is small. 
\medskip
 
\noindent\textit{Step 1: lower bound for distances in $\wt{\mcl G}^\ep|_{(y_\ep , 1+y_\ep]}$.}
Let $\wt t^L_\ep$ and $\wt t^R_\ep$ be the times at which $\wt L^\ep$ and $\wt R^\ep$, respectively, attain their minimum values on $[y_\ep   , 1 + y_\ep]$. Letting $\ul{\wt\bdy}_\ep(y_\ep , 1+y_\ep]$ denote the lower boundary of $\wt{\mcl G}^\ep|_{(y_\ep , 1+y_\ep]}$ (in the sense of Definition~\ref{def-boundary}),  
\eqb \label{eqn-lower-bdy-contain}
\ul{\wt\bdy}_\ep(y_\ep ,1+y_\ep] \subset (y_\ep ,  \wt t^L_\ep \vee \wt t^R_\ep]_{\ep\BB Z} .
\eqe 

Fix $\beta \in (0,1)$ and let
\alb
E^\ep &:= \left\{ \op{dist}\left( \ep , 1+ y_\ep   ; \wt{\mcl G}^\ep|_{(0 , 1+y_\ep]} \right) \geq  \ep^{-\chi + v } \right\} 
\cap \left\{ \wt t^L_\ep \vee \wt t^R_\ep \leq \ep^{ \beta u }   \right\}  \notag\\
&\qquad\qquad \cap \left\{\wt Z^\ep_{y_\ep} \in \left[ \ep^{u (1+u)/2} , \ep^{u(1-u)/2} \right]^2 \right\} .
\ale
By Corollary~\ref{cor-ghat} and scale invariance, the probability of the first event in the definition of $E^\ep$ is at least $\ep^{o_\ep(1)}$. By Lemma~\ref{prop-inf-times-poly} and a standard estimate for linear Brownian motion (recall that the coordinates of $\wt Z^\ep$ are independent), the probability that each of the other two events in the definition of $E^\ep$ fails to occur decays polynomially in $\ep$. Therefore,
\eqbn
\BB P\left[E^\ep \right] \geq \ep^{o_\ep(1)} .
\eqen

By applying the upper bound for distance from~\cite[Theorem 1.15]{ghs-dist-exponent} (i.e., the analog of~\eqref{eqn-dist-upper0} for $\mcl G^\ep$) to each of the graphs $\wt{\mcl G}^\ep|_{(2^{-k} y_\ep , 2^{-k+1} y_\ep]}$ for $k\in\BB N$ such that $\ep \leq 2^{-k} y_\ep \leq \ep^{\beta u}$ and summing over all such $k$, we find that with superpolynomially high probability as $\ep\rta 0$, 
\eqb \label{eqn-cond-short-dist}
 \op{dist}\left( \ep , x ;  \wt{\mcl G}^\ep|_{(0 ,  y_\ep]} \right)  \leq \ep^{-(1- \beta u) \left(\chi + v\right)} ,\quad \forall x \in ( 0 , \ep^{ \beta u } ]_{\ep\BB Z} .
\eqe 
By~\eqref{eqn-lower-bdy-contain}, if $E^\ep$ occurs and~\eqref{eqn-cond-short-dist} holds then  
\eqb \label{eqn-bdy-short-dist'}
 \op{dist}\left( \ep , x ; \wt{\mcl G}^\ep|_{(0 , 1+y_\ep]} \right) \leq \ep^{-(1-\beta u) \left(\chi + v \right)} ,\quad \forall x \in \ul{\wt\bdy}_\ep(y_\ep ,1+y_\ep] .
\eqe  
Henceforth assume that $v$ is chosen sufficiently small (depending on $u$) that $(1-\beta u) \left(\chi +v\right) < \chi - v$. 

By~\eqref{eqn-bdy-short-dist'} and the triangle inequality, if $\ep$ is chosen sufficiently small (depending on $u$ and $v$), then whenever $E^\ep$ occurs and~\eqref{eqn-cond-short-dist} holds we have 
\eqb \label{eqn-bdy-dist-event}
\op{dist}\left( 1+y_\ep , \ul{\wt\bdy}_\ep(y_\ep ,1+y_\ep]  ; \wt{\mcl G}^\ep|_{(y_\ep , 1+y_\ep]} \right) \geq \frac12 \ep^{-\chi + v}  \quad \op{and} \quad \wt Z^\ep_{y_\ep} \in \left[ \ep^{u (1+u)/2} , \ep^{u(1-u)/2} \right]^2  .
\eqe  
\medskip

\noindent\textit{Step 2: comparison of $Z$ and $\wt Z^\ep$.}
Let $\wt E^\ep$ be the event that~\eqref{eqn-bdy-dist-event} holds. Then the above discussion shows that $\BB P[\wt E^\ep] \geq \ep^{o_\ep(1)} $. Hence for each $\delta>0$, 
\alb
\ep^{o_\ep(1)}  \leq \BB P[\wt E^\ep] 
 =  \BB E\left[ \BB P\left[\wt E^\ep \,|\, \wt Z^\ep|_{[0,y_\ep]} \right] \right]  
\leq \ep^\delta  +  \BB P\left[ \BB P\left[\wt E^\ep \,|\, \wt Z^\ep |_{[0,y_\ep]} \right]  \geq \ep^\delta \right]  .
\ale
By re-arranging, we obtain
\eqb \label{eqn-cond-bm-bdy-prob}
\BB P\left[ \BB P\left[\wt E^\ep \,|\, \wt Z^\ep |_{[0,y_\ep]} \right]  \geq \ep^\delta \right]  \geq \ep^{o_\ep(1)}  , \quad\forall \delta >0 .
\eqe 

The conditional law of $(\wt Z^\ep - \wt Z^\ep_{y_\ep}) |_{[y_\ep , 1 + y_\ep]}$ given $\wt Z^\ep|_{[0,y_\ep]}$ is the same as the law of $Z|_{[0,1]}$ conditioned on the event that 
\eqbn
\inf_{s\in [0,1]} L_s \geq -\wt L^\ep_{y_\ep} \quad \op{and} \quad \inf_{s\in [0,1]} R_s \geq -\wt R^\ep_{y_\ep}  .
\eqen
By this,~\eqref{eqn-cond-bm-bdy-prob}, and the definition~\eqref{eqn-bdy-dist-event} of $\wt E^\ep$, it follows that for each $\delta >0$ there exists $a_L , a_R \in \left[ \ep^{u (1+u)/2} , \ep^{u(1-u)/2} \right]$ such that
\eqbn
\BB P\left[\op{dist}\left(  \ul\bdy_\ep(0,1]  , 1; \mcl G^\ep|_{(0,1]} \right) \geq \ep^{-\chi + \zeta} \,|\, \inf_{s\in [0,1]} L_s \geq - a_L \: \op{and} \: \inf_{s\in [0,1]} R_s \geq -a_R \right] \geq \ep^\delta .
\eqen
Since $a_L , a_R \geq \ep^{u(1+u)/2}$ and $L$ and $R$ are independent,
\eqbn
\BB P\left[\inf_{s\in [0,1]} L_s \geq - a_L \: \op{and} \: \inf_{s\in [0,1]} R_s \geq -a_R \right] \geq \ep^{ u(1+u)} .
\eqen
Therefore,
\eqbn
\BB P\left[\op{dist}\left(  \ul\bdy_\ep(0,1] , 1 ; \mcl G^\ep|_{(0,1]} \right) \geq \ep^{-\chi + \zeta} \right] \geq \ep^{\delta +   u (1+ u) }.
\eqen
Sending $\delta\rta0$ and $u \rta 0$ concludes the proof of the proposition.  
\end{proof}


\section{Proof of Proposition \ref{prop4.4modified} from Lemma \ref{prop4.4}}
\label{proofpart2}

We now proceed to the proof of of Proposition \ref{prop4.4modified} from Lemma \ref{prop4.4}. In this section, we express the law of $\mcl G^{\ep}$ in the form of Definition~\ref{gdef}, in which the vertices of the graph are subsets of $\BB H$ (``cells'') traced by a chordal SLE$_8$.  

To prove Proposition~\ref{prop4.4modified}, we construct, for some deterministic $\delta > 0$ sufficiently small and depending on $\zeta$, a path in $\mcl{G}^\ep$ from the cell containing the imaginary number $\ep^{-\delta} i/2$ to the cell containing $0$ with $D^{\ep}$-length $\ep^{-1/(d - \zeta)}$ with polynomially high probability as $\ep \rta 0$.   We then show that this path yields a path from the cell containing $0$ to the upper boundary $\ol{\bdy}_{\ep}(0,1]$ with the desired probability by showing that, with polynomially high probability as $\ep \rightarrow 0$, the curve $\eta$ will not absorb the point $\ep^{-\delta} i/2$ before time $1$.  

Our construction of the path in $\mcl{G}^\ep$ from $\ep^{-\delta} i/2$ to $0$ proceeds in three steps roughly as follows.
\begin{enumerate}
\item We convert the bound of Lemma \ref{prop4.4} to a bound on the distance between two points with the distance defined in terms of a chordal SLE$_8$ from $0$ to $\infty$ in $\BB{H}$ and the scale-invariant component  $h_2$ of a quantum wedge field in $\BB{H}$ (Proposition~\ref{firststep}).
\item We apply  this bound to the pairs of points  $\ep^{-\delta} i/2^{j-1}$ and $\ep^{-\delta} i/2^{j}$ for each integer $1 < j < \ep^{-\xi}$ for  fixed but small $\delta, \xi > 0$. If we make $\delta$ and $\xi$ sufficiently small, we can use the scale invariance of $h_2$ to bound distances between each of these pairs of points from above simultaneously, and then combine these bound to bound  the $D^{\ep}$-distance from $\ep^{-\delta} i/2$ to $\ep^{-\delta} i/2^{-\lfloor \ep^{-\xi} \rfloor}$, all with polynomially high probability as $\ep \rightarrow 0$ (Proposition \ref{polysteps}). 
\item We show that the point $\ep^{-\delta} i/2^{-\lfloor \ep^{-\xi} \rfloor}$ is contained in the origin-containing cell of  $\mcl{G}^{\ep}$ with polynomially high probability as $\ep \rightarrow 0$ (Proposition \ref{lastcell}).
\end{enumerate}

\begin{figure}[ht!] \centering
\begin{tabular}{ccc} 
\includegraphics[width=0.28\textwidth]{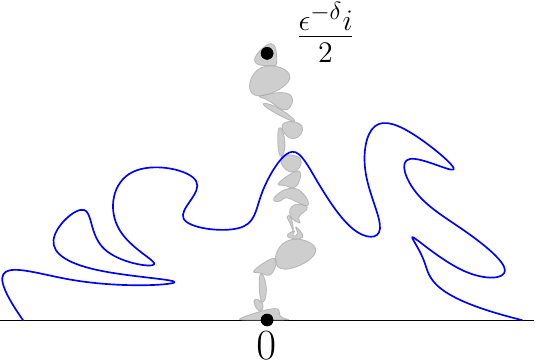}
&
\includegraphics[width=0.28\textwidth]{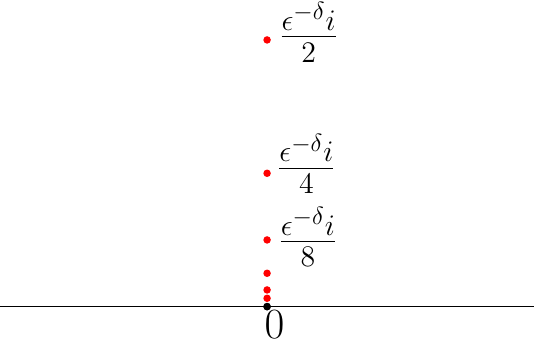}
&
\includegraphics[width=0.28\textwidth]{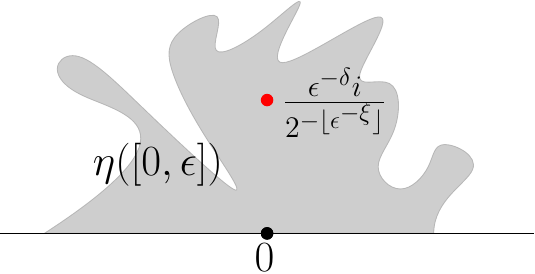}
\end{tabular}
\caption{A sketch of the proof of Proposition~\ref{prop4.4modified}. (For brevity, we omit the qualifier ``with polynomially high probability as $\ep \rightarrow 0$'' which applies to each of these steps.) \textbf{Left:} To upper-bound the $D^{\ep}$-distance between $\ol{\bdy}_{\ep}(0,1]$ and the origin, we construct a path of cells of $\mcl G^{\ep}$ (colored in gray) from $\ep^{-\delta} i/2$ to $0$ of the desired length. This path will contain a path from $\ol{\bdy}_{\ep}(0,1]$ to the cell containing $0$ as long as the point $\ep^{-\delta} i/2$ lies outside $\eta(0,1)$, which we show separately.  \textbf{Center:} We construct the path from the cell containing $\ep^{-\delta} i/2$ to the cell containing $0$  using a multiscale argument:  starting with the point $\ep^{-\delta} i/2$, we consider a sequence of polynomially (in $\ep$) many  points approaching the origin geometrically, and we bound distances between consecutive pairs of points.  \textbf{Right:}  (Zoomed in near the origin.) We show that the last point in the sequence is contained in the origin-containing cell of  $\mcl{G}^{\ep}$.}
\label{fig-multiscale}
\end{figure}

\subsection{Two notions of distance in terms of SLE cells}
\label{sec-metric-def}

To execute the construction just outlined, we first need a notation of distance that differs slightly from the metric $D^{\ep}_{\mcl D,h,\eta}(\cdot,\cdot;U)$ defined in Definition~\ref{gdef}. Though the latter is quite natural, it has the crucial drawback that it does not just depend on the restriction of $h$ and $\eta$ to $U$, since we cannot ``see" which times for $\eta$ are elements of $\ep\BB Z$ if we just see $h|_U$ and the segments of $\eta$ contained in $U$. We therefore define a second notion of distance that possesses this useful property.

\begin{defn} \label{def-cell-dist-tilde}
Let $\mcl D$, $h$ and $\eta$ be as in Definition \ref{gdef}. For fixed $\ep > 0$, $U \subset \mcl D$, and $z_1,z_2$ in the closure of $U$ in $\BB{C}$, let $\wt{D}^{\ep}_{\mcl D,h,\eta}(z_1,z_2;U)$ be equal to the minimum number of SLE segments of the form $\eta([a,b])$ for $0 < b-a \leq \ep$ which are contained in $\overline{U}$ and whose union contains a Euclidean path from $z_1$ to $z_2$. 
We abbreviate $\wt{D}^{\ep}_{\mcl D,h,\eta}(\cdot,\cdot):= \wt{D}^{\ep}_{\mcl D,h,\eta}(\cdot,\cdot;\mcl D)$.
\end{defn} 
 
We state two useful properties of the metric $\wt{D}^{\ep}_{\mcl D,h,\eta}(\cdot,\cdot;U)$ that are easy to verify.

\begin{lem} \noindent \label{lem}
Let $h_1$ and $h_2$ be two GFF type distributions defined on a domain $\mcl D$,  let  $U \subset \mcl D$, let $\eta$ be as in Definition \ref{gdef}.  Let $z_1,z_2 \in U$.
\begin{enumerate}[label={(\roman*)}, ref={\thelem(\roman*)},itemindent=1em]

       \item
If $h_1 =  h_2 + c$ a.s. for some constant $c$, then, almost surely,
\[
\wt{D}_{\mcl D,h_1,\eta}^{\ep}(z_1,z_2;U) = \wt{D}_{\mcl D,h_2,\eta}^{e^{-\sqrt{2} c} \ep}(z_1,z_2;U) .
\]
\label{lem:distcomparison} 
       \item 
If $h_1$ and $h_2$ are two GFF type distributions defined on a domain $\mcl D$ such that $(h_2 - h_1)|_U$ is a non-negative function on $U$ and if $\ep_1 \geq \ep_2$ then, almost surely,
\[
\wt{D}_{\mcl D,h_1,\eta}^{\ep_1}(z_1,z_2;U) \leq \wt{D}_{\mcl D,h_2,\eta}^{\ep_2}(z_1,z_2;U) .
\]
\label{lem:monotonicity}
     \end{enumerate}
\end{lem}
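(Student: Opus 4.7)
The plan is to reduce both assertions to the standard transformation rules for the $\sqrt 2$-LQG area measure $\mu_h$, exploiting the fact that $\wt D^{\ep}_{\mcl D, h, \eta}$ was set up in Definition~\ref{def-cell-dist-tilde} to depend only on the family of subsets of $\BB C$ of the form $\eta([a,b])$ with $\mu_h(\eta([a,b])) \leq \ep$. In other words, unlike $D^\ep$, the quantity $\wt D^\ep$ does not see the lattice $\ep\BB Z$ of endpoint times, only the collection of ``admissible chunks'' of $\eta$ as sets in the plane. Once this geometric nature is made explicit, both statements become transparent.

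For \ref{lem:distcomparison}, I would use the scaling relation $\mu_{h_2 + c} = e^{\sqrt 2 c}\mu_{h_2}$. Writing $\eta^{(i)}$ for the curve $\eta$ reparametrized by $\mu_{h_i}$-mass, this relation forces $\eta^{(1)}(t) = \eta^{(2)}(e^{-\sqrt 2 c} t)$, so $\eta^{(1)}$ and $\eta^{(2)}$ have identical images and differ only by a linear time change. Consequently the two families
\[
\{\eta^{(1)}([a,b]) : 0 < b-a \leq \ep\} \quad \text{and} \quad \{\eta^{(2)}([a',b']) : 0 < b'-a' \leq e^{-\sqrt 2 c}\ep\}
\]
coincide as collections of subsets of $\BB C$. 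The infima in the definition of $\wt D^\ep_{\mcl D, h_1,\eta}(z_1,z_2;U)$ and $\wt D^{e^{-\sqrt 2 c}\ep}_{\mcl D,h_2,\eta}(z_1,z_2;U)$ are therefore taken over precisely the same covers of Euclidean paths from $z_1$ to $z_2$ in $\ol U$, so the two quantities are equal a.s.

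For \ref{lem:monotonicity}, I would invoke the standard comparison $\mu_{h_1}(A) \leq \mu_{h_2}(A)$ for Borel sets $A \subset U$ whenever $(h_2-h_1)|_U \geq 0$; this comes from the Radon--Nikodym identity $d\mu_{h_1}|_U = e^{\sqrt 2 (h_1 - h_2)} d\mu_{h_2}|_U$, which holds once $(h_2-h_1)|_U$ is a genuine function. Any segment $\eta^{(2)}([a,b])\subset \ol U$ with $b-a\leq \ep_2$ therefore has $\mu_{h_1}$-mass at most $\mu_{h_2}(\eta^{(2)}([a,b])) \leq \ep_2 \leq \ep_1$, so as a subset of $\BB C$ it is of the form $\eta^{(1)}([a',b'])$ with $b'-a'\leq \ep_1$. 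Any admissible cover witnessing $\wt D^{\ep_2}_{\mcl D,h_2,\eta}(z_1,z_2;U)$ is thus also an admissible cover for $\wt D^{\ep_1}_{\mcl D,h_1,\eta}(z_1,z_2;U)$; taking the infimum yields the desired inequality.

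The only mildly delicate point is justifying the measure identities $\mu_{h+c} = e^{\sqrt 2 c}\mu_h$ and $d\mu_{h_1}|_U = e^{\sqrt 2(h_1-h_2)}d\mu_{h_2}|_U$ in the GMC framework, but both are standard consequences of the construction of $\mu_h$ as a limit of regularized exponentials (see, e.g.,~\cite{shef-kpz, rhodes-vargas-review}), and in the setting of \ref{lem:monotonicity} the hypothesis that $(h_2-h_1)|_U$ is an actual function is precisely what makes the change of measure make sense.
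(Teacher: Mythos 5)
Your proposal is correct and follows essentially the same route as the paper: the paper's (two-sentence) proof likewise observes that adding a constant $c$ multiplies the $\sqrt 2$-LQG mass of every segment of $\eta$ by $e^{\sqrt 2 c}$, and that when $(h_2-h_1)|_U\geq 0$ every segment of $\mu_{h_2}$-mass at most $\ep_2$ has $\mu_{h_1}$-mass at most $\ep_1$, so the admissible covers coincide (resp.\ are nested). Your write-up simply makes explicit the observation, implicit in the paper, that $\wt D^\ep$ depends only on the collection of admissible chunks of $\eta$ viewed as subsets of the plane.
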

\begin{proof}
The first statement follows since adding $c$ to the field scales the $\sqrt 2$-LQG area of each segment of $\eta$ by $e^{\sqrt 2 c}$. The second statement follows since each segment of $\eta$ with $\mu_{h_2}$-area at most $\ep_2$ has $\mu_{h_1}$ area at most $\ep_1$. 
\end{proof}

Moreover, we can relate $\wt{D}^{\ep}_{\mcl D,h,\eta}(\cdot,\cdot;U)$ to the  distance $D^{\ep}_{\mcl D,h,\eta}(\cdot,\cdot;U)$ of Definition~\ref{gdef}, which is what we really care about. See~\cite[Lemma 4.2]{dg-lqg-dim} for a closely related statement. 

\begin{lem}
Let $\mcl D$, $h$ and $\eta$ be as in Definition \ref{gdef}, and let $U \subset \mcl D$.  
\begin{enumerate}[label={(\roman*)}, ref={\thelem(\roman*)},itemindent=1em]
\item We have 
\eqb
\wt{D}_{\mcl D,h,\eta}^{\ep}(z_1,z_2;U) \leq D_{\mcl D,h,\eta}^{\ep}(z_1,z_2;U) +1   \qquad \forall z_1,z_2 \in U
\eqe
\label{dist-comparison-a}
\item
On the event that each cell $\eta([x-\ep,x])$ for $x \in \ep \BB{Z}$ which intersects $\overline{U}$ is contained in $\overline{V}$ (where $V \supset U$ is an open subset of $\mcl D$),
\eqb
D_{\mcl D,h,\eta}^{\ep}(z_1,z_2;V) \leq 2 \wt{D}_{\mcl D,h,\eta}^{\ep}(z_1,z_2;U)  \qquad \forall z_1,z_2 \in U 
\label{dist-inequality}
\eqe
\label{dist-comparison-b}
\end{enumerate}
\end{lem}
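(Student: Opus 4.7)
My plan is to deduce both inequalities by a direct combinatorial translation between the two notions, exploiting the key observation that a cell $\eta([x-\ep,x])$ of Definition~\ref{gdef} is itself an admissible segment in Definition~\ref{def-cell-dist-tilde}, and conversely that any admissible segment $\eta([a,b])$ with $0 < b-a \leq \ep$ is contained in the union of at most two cells $\eta([x-\ep,x])$ with $x \in \ep \BB Z$ (since the interval $[a,b]$ has length at most $\ep$ and so meets at most two of the dyadic intervals $[x-\ep,x]$).

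For part (i), I would fix a shortest path in $\mcl G^\ep_{\mcl D,h,\eta}$ restricted to the cells contained in $\ol U$, realizing the graph distance $N := D^\ep_{\mcl D,h,\eta}(z_1,z_2;U)$. This produces a sequence of $N+1$ cells $\eta([x_j - \ep, x_j])$ for $j=0,\dots,N$ with $x_j \in \ep \BB Z$, all contained in $\ol U$, with consecutive cells intersecting, and with $z_1$ (resp.\ $z_2$) in the first (resp.\ last) cell. The union of these $N+1$ closed cells is a connected subset of $\ol U$ containing $z_1$ and $z_2$, so contains a Euclidean path between them. Since each cell is an admissible segment, this yields $\wt D^\ep_{\mcl D,h,\eta}(z_1,z_2;U) \leq N+1$, as desired.

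For part (ii), I would start with a minimal collection of $N := \wt D^\ep_{\mcl D,h,\eta}(z_1,z_2;U)$ segments $\eta([a_i,b_i]) \subset \ol U$ with $0 < b_i - a_i \leq \ep$ whose union contains a Euclidean path $\gamma$ from $z_1$ to $z_2$. By the observation above, each such segment lies in the union of at most two cells of $\mcl G^\ep_{\mcl D,h,\eta}$, and every such cell meets $\ol U$ and so lies in $\ol V$ by the hypothesis of part (ii). Collecting these cells gives a set $S$ of cells in $\ol V$ with $|S| \leq 2N$ whose union contains $\gamma$. Tracing $\gamma$ from $z_1$ to $z_2$ and recording the cells of $S$ it visits in order yields a walk in $\mcl G^\ep_{\mcl D,h,\eta}$ (restricted to cells in $\ol V$) from the cell containing $z_1$ to the cell containing $z_2$, because consecutive cells along this walk meet at a point of $\gamma$. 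Reducing to a simple path gives $D^\ep_{\mcl D,h,\eta}(z_1,z_2;V) \leq |S|-1 \leq 2N-1 \leq 2\wt D^\ep_{\mcl D,h,\eta}(z_1,z_2;U)$.

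I do not anticipate either step to be a serious obstacle; both are elementary consequences of the definitions. The only mildly delicate point is formalizing the ``tracing $\gamma$ through cells'' argument in part (ii), since the cells have irregular geometry (closures of Jordan domains bounded by SLE-type boundary arcs), but this is a standard compactness and planar topology matter, handled by partitioning $[0,1]$ according to which cells of $S$ contain the corresponding point of $\gamma$.
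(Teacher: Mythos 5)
Your proposal is correct and follows essentially the same route as the paper: part (i) is the observation that each cell $\eta([x-\ep,x])$ along a geodesic of cells in $\ol U$ is itself an admissible segment (with the $+1$ accounting for the fact that $\wt D^\ep$ is $1$ rather than $0$ for two points in a common segment), and part (ii) rests on exactly the paper's key observation that every segment $\eta([a,b])$ with $0<b-a\leq\ep$ contained in $\ol U$ lies in the union of at most two cells $\eta([x-\ep,x])$, $x\in\ep\BB Z$, which intersect $\ol U$ and hence lie in $\ol V$ on the stated event. Your tracing-of-$\gamma$ step just spells out what the paper leaves implicit, so there is nothing further to add.
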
 
\begin{proof}
Part (i) is obvious (the $+1$ is needed since the $D_{\mcl D,h,\eta}^{\ep}$-distance between two points in the same segment $\eta([a,b])$ with $b-a \leq \ep$ is 1, whereas the $D_{\mcl D,h,\eta}^\ep$-distance between two such points can be zero). Part (ii) follows from the observation that each segment $\eta([a,b])$ with $0 < b-a \leq \ep$ which is contained in $U$ is contained in the union of at most two segments of the form $\eta([x-\ep,x])$ for $x \in \ep \BB{Z}$ which intersect $\ol{U}$, hence are contained in $\ol{V}$.
\end{proof}

If $h$ is taken to be a whole-plane GFF normalized to have mean zero on the unit circle, and $\eta$ a whole-plane SLE$_8$ from $\infty$ to $\infty$, the inequality~\eqref{dist-inequality} holds with polynomially high probability as $\ep \rta 0$.  This is a consequence of the following lemma, which follows from basic SLE/LQG estimates; see, e.g., the proof of~\cite[Lemma 2.4]{ghs-map-dist}, which applies verbatim in our setting. 

\begin{lem} \label{whole_lemma}
Let $h^{\op{whole}}$ be a whole-plane GFF normalized to have mean zero on the unit circle and $\eta^{\op{whole}}$  a whole-plane SLE$_8$ from $\infty$ to $\infty$. Then there is a constant $q>0$ such that for each bounded open set $U\subset\BB C$, it holds with polynomially high probability as $\ep \rta 0$ that the maximal diameter of the segments $\eta([a,b])$ with $0 < b-a \leq \ep$ and which intersect $\overline{U}$ is at most $\ep^q$.
\end{lem}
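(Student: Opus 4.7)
The plan is to upper bound the Euclidean diameter of cells by combining a lower bound on the $\sqrt{2}$-LQG mass of small Euclidean balls with a covering argument, following closely the proof of \cite[Lemma 2.4]{ghs-map-dist}. First I would reduce to bounding the diameters of the canonical cells $\eta^{\op{whole}}([x-\ep,x])$ for $x \in \ep \BB Z$, since every segment $\eta^{\op{whole}}([a,b])$ with $0 < b-a \leq \ep$ is contained in the union of at most two consecutive canonical cells; it therefore suffices to control the diameters of all canonical cells meeting a slight enlargement of $\ol U$.

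The key probabilistic input is the standard tail bound for the $\sqrt{2}$-LQG mass of Euclidean balls: for each $\delta > 0$ there exists $\beta > 0$ such that uniformly in $z$ with $|z| = O(1)$ and $r \in (0, 1/2)$,
\[
\BB P\bigl[\mu_{h^{\op{whole}}}(B_r(z)) < r^{3+\delta}\bigr] \leq r^{\beta},
\]
where $3 = 2 + \gamma^2/2$ is the deterministic $\sqrt{2}$-LQG scaling exponent. This follows from the decomposition $\mu_{h^{\op{whole}}}(B_r(z)) \asymp r^3 e^{\sqrt 2 h_r^{\op{whole}}(z)} M_r(z)$, the Gaussian tail of the circle average $h_r^{\op{whole}}(z)$, and standard negative-moment bounds on the normalized residual GMC mass $M_r(z)$. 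Fixing $q \in (0, 1/3)$ small enough that $q(3+\delta) < 1$, I would cover $\ol U$ by $O(\ep^{-2q})$ Euclidean balls $B_{\ep^q/10}(z_i)$ centered on a regular grid of spacing $\ep^q/2$, and union-bound the displayed tail estimate to obtain, with polynomially high probability as $\ep \rta 0$, the simultaneous bound $\mu_{h^{\op{whole}}}(B_{\ep^q/10}(z_i)) \geq \ep^{q(3+\delta)} \gg \ep$ for every grid center $z_i$.

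The final step converts this per-ball quantum-mass estimate into the desired diameter bound. The idea is that if some canonical cell $C$ with $\mu_{h^{\op{whole}}}(C) \leq \ep$ were to meet $\ol U$ yet have Euclidean diameter greater than $\ep^q$, then by connectedness $C$ would have to surround a grid center $z_i$ at Euclidean scale $\gtrsim \ep^q$; using the topological regularity of space-filling SLE$_8$ cells (each is the closure of a Jordan domain), this forces $C \supseteq B_{\ep^q/10}(z_i)$, and hence $\mu_{h^{\op{whole}}}(C) \geq \mu_{h^{\op{whole}}}(B_{\ep^q/10}(z_i)) > \ep$, a contradiction. The main obstacle is precisely this topological step, since a priori a thin connected set of large Euclidean diameter need not enclose any Euclidean ball; the cleanest route through this difficulty is to replace the ball-containment claim by the crossing-based estimate used in \cite[Lemma 2.4]{ghs-map-dist}, which carries over verbatim. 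The statement for general bounded open $U$ then follows from translation invariance of $h^{\op{whole}}$ modulo a global additive constant together with the corresponding scaling of $\mu_{h^{\op{whole}}}$.
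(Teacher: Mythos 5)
Your proposal is correct and follows essentially the same route as the paper, whose entire proof of this lemma is a pointer to the proof of \cite[Lemma 2.4]{ghs-map-dist}: you reconstruct exactly that argument (a lower bound on the $\sqrt 2$-LQG mass of small Euclidean balls on a polynomially fine grid, plus an SLE input guaranteeing that a space-filling SLE$_8$ cell of large Euclidean diameter cannot be thin), and, like the paper, you ultimately defer the genuinely SLE-specific input to that same reference after correctly observing that the naive ``surrounds a grid center, hence contains a ball'' step is invalid. The only adjustment needed is quantitative: with $\asymp \ep^{-2q}$ grid balls of radius $\asymp \ep^q$ and per-ball failure probability $\ep^{q\beta}$, the union bound requires $\beta>2$, so you must take $\delta$ large (the lower-tail exponent grows like $\delta^2/4$ for $\gamma=\sqrt 2$) rather than arbitrary; equivalently, because of thick points the admissible exponent is $q<2/(2+\sqrt 2)^2$ rather than merely $q<1/3$, which is harmless since the lemma only asserts the existence of some $q>0$.
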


\subsection{Transferring from the whole plane to the half plane}
\label{sec-whole-half}

We are now ready to begin the proof of Proposition~\ref{prop4.4modified}.

We begin by proving the analog of Lemma~\ref{prop4.4} with $\eta^{\op{whole}}$ replaced by a chordal SLE$_8$ from $0$ to infinity in $\BB{H}$ and with $h^{\op{whole}}$ replaced by the field $h_2$ associated with the $Q$-quantum wedge (as defined in Definition~\ref{wedgedef}). Ultimately, we are interested in distances w.r.t.\ the metric $D^{\ep}_{\BB{H},h, \eta}$. However, at this stage of the proof of Proposition \ref{prop4.4modified}, we want to consider the field $h_2$ because of its scale invariance property. We will use this property in the proof of the Proposition \ref{polysteps} below to bound the $D^{\ep}$-lengths between polynomially (in $\ep$) many pairs of points.

\begin{prop}
Let $U \subset \BB{H}$, let $h_2$ be the mean-zero part of a quantum wedge as in Definition~\ref{wedgedef}, and let $\eta$ be an independent chordal $SLE_8$ from $0$ to infinity in $\BB{H}$. Also define the approximate metric $\wt D^\ep_{\BB H , h_2,\eta}$ as in Definition~\ref{def-cell-dist-tilde}. Then, for any fixed $z,w \in U$ and $\zeta \in (0,1)$,
\eqb \label{eqn-firststep}
\wt{D}^{\ep}_{\BB{H},h_2, \eta}(z, w ; U)\leq \ep^{- 1/(d - \zeta)}
\eqe
with polynomially high probability as $\ep \rightarrow 0$.
\label{firststep}
\end{prop}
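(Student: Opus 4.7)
The plan is to deduce Proposition~\ref{firststep} from Lemma~\ref{prop4.4} by a chain of absolute continuity arguments that transfer the bound from the whole-plane setting $(h^{\op{whole}}, \eta^{\op{whole}})$ to the half-plane setting $(h_2, \eta)$. The key point is that a polynomially high probability estimate is preserved under any change of measure with a Radon-Nikodym derivative having a finite $q$-th moment for some $q > 1$, since if $\BB P_1[A^c] \leq \ep^p$ and $dP_2/dP_1 \in L^q(P_1)$, then H\"older gives $P_2[A^c] \leq \|dP_2/dP_1\|_q \, \ep^{p(q-1)/q}$, which is still polynomially small.

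First, fix $\zeta' \in (0,\zeta)$ and apply Lemma~\ref{prop4.4} to conclude that $D^{\ep}_{\BB C, h^{\op{whole}}, \eta^{\op{whole}}}(z,w;U) \leq \ep^{-1/(d-\zeta')}$ with polynomially high probability. Using Lemma~\ref{dist-comparison-a}, we convert this into an equivalent bound on $\wt D^{\ep}_{\BB C, h^{\op{whole}}, \eta^{\op{whole}}}(z,w;U)$ (up to an additive constant $1$, which is absorbed by slightly shrinking $\zeta'$). Next, fix a bounded open set $V$ with $U \subset V \subset \overline V \subset \BB H$. By Lemma~\ref{whole_lemma}, with polynomially high probability every segment $\eta^{\op{whole}}([a,b])$ with $0 < b-a \leq \ep$ that intersects $\overline U$ has Euclidean diameter at most $\ep^q$, hence (for small $\ep$) is contained in $\overline V$. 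On this event, the quantity $\wt D^{\ep}_{\BB C, h^{\op{whole}}, \eta^{\op{whole}}}(z,w;U)$ depends on $\eta^{\op{whole}}$ only through the collection of its segments that lie in $\overline V$.

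Now transfer from whole-plane to chordal SLE. Since $\eta^{\op{whole}}$ is independent of $h^{\op{whole}}$ and $\eta$ is independent of everything, we may condition on $h^{\op{whole}}|_V$ and invoke Corollary~\ref{nikodym}: the law of the segments of $\eta$ in $V$ is absolutely continuous with respect to that of $\eta^{\op{whole}}$ in $V$ with Radon-Nikodym derivative in $L^q$ for some $q > 1$. The H\"older estimate above then gives the analogous polynomially high probability bound on $\wt D^{\ep}_{\BB C, h^{\op{whole}}, \eta}(z,w;U)$.

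Finally, transfer from $h^{\op{whole}}$ to $h_2$ on $V$. Both fields admit the decomposition $h^0_V + \widehat h_V$, where $h^0_V$ is a zero-boundary GFF on $V$ and $\widehat h_V$ is an independent random harmonic function whose law depends on the global field. The two harmonic parts are Gaussian processes on $\overline V$ whose laws are mutually absolutely continuous with Radon-Nikodym derivative lying in $L^q$ for some $q > 1$ (a Cameron--Martin type calculation identical in spirit to the proof of Lemma~\ref{lem-nikodym}, combined with Borell--TIS to obtain moments). Applying the H\"older argument once more yields the desired polynomial probability estimate for $\wt D^{\ep}_{\BB H, h_2, \eta}(z,w;U)$, completing the proof. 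The main technical step is this last field transfer: the whole-plane GFF and the scale-invariant field $h_2$ are globally very different, but their restrictions to $V$ (bounded and bounded away from $\BB R$) differ only in their independent harmonic components, and these can be compared using a standard Gaussian absolute continuity argument.
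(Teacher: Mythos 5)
Your first two steps are sound and essentially coincide with the paper's Step 1: Lemma~\ref{prop4.4} together with Lemma~\ref{dist-comparison-a} gives the bound for $\wt D^{\ep}$ in the whole-plane setting, and since $\wt D^{\ep}(\cdot,\cdot;U)$ is by construction determined by $h|_{\ol U}$ together with the collection of segments of the curve contained in $\ol U$ (so the appeal to Lemma~\ref{whole_lemma} is not even needed at this point), the imaginary-geometry measurability statement plus Lemma~\ref{lem-nikodym}/Corollary~\ref{nikodym} and H\"older transfer the estimate from $\eta^{\op{whole}}$ to $\eta$.

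The gap is your final step: the claimed absolute continuity between $h^{\op{whole}}|_{\ol V}$ and $h_2|_{\ol V}$ is false, not merely unjustified. In the natural coupling one has $h^{\op{whole}} = h_2 + f_1 + f_2$ on $\ol V$, where $f_1$ is a harmonic Gaussian field but $f_2$ is the radial (semicircle-average) part of the free-boundary GFF, i.e.\ $f_2(z) = B_{-\log|z|}$ for a Brownian motion $B$ independent of $h_2$. So $h_2$ does not admit the decomposition you assert (its remainder over a zero-boundary GFF on $V$ is neither harmonic nor independent of it), and $f_2$ is not a Cameron--Martin-type perturbation: its sample paths have infinite Dirichlet energy, so the computation in Lemma~\ref{lem-nikodym}, which crucially uses that the difference of the two fields is a.s.\ harmonic (hence of finite Dirichlet energy after multiplication by a bump function), does not extend. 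In fact the laws of $h_2|_{\ol U}$ and $h^{\op{whole}}|_{\ol U}$ are mutually singular: pairing the field with test functions supported in $U$ that oscillate at frequency $\lambda$ in the variable $-\log|z|$ and are roughly constant in the angular variable, the variance contributed by $f_2$ is of the same order $\lambda^{-2}$ as the variance coming from $h_2$ itself, uniformly in $\lambda$, and averaging such (asymptotically decorrelated) statistics over dyadic frequencies separates the two laws; by Feldman--H\'ajek there is then no Radon--Nikodym derivative at all, let alone one in $L^q$, so the H\"older mechanism cannot be applied. This is exactly why the paper avoids absolute continuity in this step: it instead bounds $\sup_{\ol U}|h_2 - h^{\op{whole}}|$ by $\BB E\|g\|_\infty + \delta\log\ep^{-1}$ (with $g = f_1 + f_2$) with superpolynomially high probability via Borell--TIS, and then uses Lemmas~\ref{lem:distcomparison} and~\ref{lem:monotonicity} to absorb the discrepancy into replacing $\ep$ by $\ep^{1+2\delta}$, which is in turn absorbed into $\zeta$. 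Replacing your last paragraph by this sup-norm-plus-monotonicity argument makes the proof go through.
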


\begin{proof}
By possibly replacing $U$ by a smaller open subset of $\BB H$ containing $z$ and $w$, we can assume without loss of generality that $\ol U \subset \BB H$; note that shrinking $U$ can only increase $\wt D^\ep_{\BB{H},h_2, \eta}(\cdot,\cdot;U)$. 
By Lemma~\ref{prop4.4} and Lemma~\ref{dist-comparison-a} (with $h^{\op{whole}}$ and $\eta^{\op{whole}}$ defined as in Lemma~\ref{prop4.4}), we have, for each fixed $\zeta \in (0,1)$,
\begin{equation}
\wt{D}^{\ep}_{\BB{H},h^{\op{whole}}, \eta^{\op{whole}}}(z,w ; U )\leq \ep^{- 1/(d - \zeta)}
\label{4.4}
\end{equation}
with polynomially high probability  as $\ep \rightarrow 0$. We need to transfer from $(h^{\op{whole}},\eta^{\op{whole}})$ to $(h_2 ,\eta)$.  

\medskip
\noindent\textit{Step 1: replacing $\eta^{\op{whole}}$ by $\eta$.} 
We first show that \eqref{4.4} still holds with $\eta^{\op{whole}}$ replaced by $\eta$.
Choose an open set $V \supset \overline{U}$ whose closure is  contained in $\BB{H}$. By Lemma~\ref{lem-gms-harmonic-2.4}, the event that~\eqref{4.4} is satisfied is measurable with respect to $h^{\op{whole}}$ and the restriction to $\overline{V}$ of the imaginary geometry field associated to  $\eta^{\op{whole}}$. Thus, by Lemma~\ref{nikodym} and H\"{o}lder's inequality, 
\begin{equation}
\wt{D}^{\ep}_{\BB{H},h^{\op{whole}}, \eta}(z,w ; U )\leq \ep^{- 1/(d - \zeta')}
\label{4.4eta}
\end{equation}
with polynomially high probability  as $\ep \rightarrow 0$. 

\medskip
\noindent\textit{Step 2: replacing $h^{\op{whole}}$ by $h_2$.} We now show that we can replace $h^{\op{whole}}$ by $h_2$ in~\eqref{4.4eta}. Define $V$ as in Step 1.
By comparing the Green functions associated to the whole-plane and free boundary GFFs, we see that we can write
\[
h^{\op{whole}}|_{\ol{V}} = h^{\op{free}}|_{\ol{V}} + f_1,
\]
where $h^{\op{free}}$ is a free boundary GFF on $\BB{H}$ (normalized so that its circle average over $\bdy B_1(0) \cap \BB{H}$ is zero)  and $f_1$ is an independent random harmonic function on $\ol{V}$ which is a centered Gaussian process on $\ol{V}$ with covariances $\text{Cov}(f_1(x),f_1(y)) = - \log|x - \bar{y}|$.
Furthermore, we can couple $h^{\op{free}}$ with $h_2$ so that
\[
h^{\op{free}} = h_2 + f_2
\]
where $f_2$ is the function in $\mcl{H}_1(\BB{H})$ whose common value on $\bdy B(0,e^{-t})$ is equal to $B_{t}$, where $B_t$ is a standard Brownian motion.  

The function $g:= f_1 + f_2$ on $\ol{V}$ is a centered Gaussian process with $\|g\|_V = \sup_{v \in V} |g| < \infty$ almost surely; therefore, the Borell-TIS inequality gives $\BB{E} \|g\|_V  < \infty$,  $\sigma^2 := \sup_{\ol{V}} \BB{E} |g|^2 < \infty$, and
\eqb
\BB{P}\left[ \|g\|_V  - \BB{E} \|g\|_V > u \right] \leq \exp(-u^2/(2\sigma^2))
\label{delta_super_tail}
\eqe
for each $u > 0$.  

We deduce that, for any $\delta >0$, it holds with superpolynomially high probability as $\ep\rta 0$ that 
\eqb
\max_{z \in \ol U} |(h_2 -  h^{\op{whole}})(z)| \leq \BB{E}\|g\|_V - \delta \log \ep.
\label{delta_super}
\eqe
On the event that~\eqref{delta_super} holds, we can apply Lemma~\ref{lem:monotonicity} to the pair of fields $h_2$ and $h^{\op{whole}} + \BB{E}\|g\|_V - \delta \log \ep$. Since, by~\eqref{delta_super}, the difference $(h^{\op{whole}} + \BB{E}\|g\|_V - \delta \log \ep) - h_2$ is nonnegative, Lemma~\ref{lem:monotonicity} implies that
\eqb
\wt{D}_{\BB{H},h_2,\eta}^{\ep}(z,w;V)  \leq \wt{D}^{\ep^{1+2\delta}}_{\BB{H}, h^{\op{whole}}, \eta}(z,w ; U )
\label{2}
\eqe
for $\ep$ sufficiently small. Combining with~\eqref{4.4eta} and choosing $\delta$ and $\zeta'$ sufficiently small, depending on $\zeta$, yields the desired result.
\end{proof}

\subsection{Distance to the origin in the $Q$-quantum wedge}
\label{sec-wedge-dist}

In this subsection we will establish the following proposition, which in particular implies the existence of the path from $\ep^{-\delta} i/2$ to $0$ in $\mcl{G}^\ep$ discussed at the beginning of this section and depicted in Figure~\ref{fig-multiscale}. 

\begin{prop}
For each fixed $\zeta \in (0,1)$, we can choose $\delta > 0$ sufficiently small so that
\[
D^{\ep}\left( \frac{\ep^{-\delta} i}{2} ,0\right) \leq \ep^{- 1/(d - \zeta)}
\]
with polynomially high probability as $\ep \rightarrow 0$.
\label{allsteps}
\end{prop}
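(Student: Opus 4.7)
The proof of Proposition~\ref{allsteps} will follow by combining Propositions~\ref{polysteps} and~\ref{lastcell} (Steps~2 and~3 of the outline at the start of this section) via the triangle inequality in the graph $\mcl G^\ep$:
\[
D^\ep\bigl(\ep^{-\delta}i/2,\,0\bigr) \;\leq\; D^\ep\bigl(\ep^{-\delta}i/2,\,\ep^{-\delta}i/2^{J}\bigr) \;+\; D^\ep\bigl(\ep^{-\delta}i/2^{J},\,0\bigr),
\]
for $J := \lfloor \ep^{-\xi}\rfloor$ and a small $\xi > 0$ to be chosen depending on $\zeta$ and $\delta$. Proposition~\ref{polysteps} will bound the first term by $\ep^{-1/(d-\zeta)}$ with polynomially high probability, and Proposition~\ref{lastcell} will show that the second term is $O(1)$ with the same probability. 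The substance lies in proving those two propositions, which I now sketch.

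For Proposition~\ref{polysteps}, the plan is a multiscale chaining argument. Set $C_j := \ep^{-\delta}/2^j$, so that the rescaling $z \mapsto z/C_j$ of $\BB H$ sends the pair $(\ep^{-\delta}i/2^{j-1},\,\ep^{-\delta}i/2^j)$ to $(2i,\,i)$. Using the scale invariance $h_2(C_j\cdot)\eqD h_2$ together with the $\sqrt 2$-LQG coordinate change and Lemma~\ref{lem:distcomparison}, the $\wt D^\ep$-distance between the original pair has the same law as $\wt D^{\ep_j}_{\BB H,h_2,\eta}(2i,\,i;U)$ with $\ep_j := \ep\, C_j^{-3}$ (noting $\sqrt 2 Q = 3$). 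For the $O(\log(1/\ep))$ scales at which $\ep_j \rta 0$ as $\ep\rta 0$ (which requires $2^j \leq \ep^{-A}$ for $A$ depending on $\delta$), Proposition~\ref{firststep} applied with a slightly sharper $\zeta' < \zeta$ gives a polynomially-high-probability bound $\ep_j^{-1/(d-\zeta')} \leq \ep^{-1/(d-\zeta')}$, and a union bound over these scales preserves polynomially high probability. For the remaining $O(\ep^{-\xi})$ ``microscopic'' scales (at which $\ep_j$ is already large), one cell of $\eta$ covers the relevant region so the distance is trivially $O(1)$, and these contribute at most $\ep^{-\xi}$ in total. Summing yields $O(\log(1/\ep)) \cdot \ep^{-1/(d-\zeta')} + O(\ep^{-\xi}) \leq \ep^{-1/(d-\zeta)}$ for small enough $\xi$ and $\zeta'$. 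Finally one must transfer from $\wt D^\ep$ to $D^\ep$ (via Lemma~\ref{dist-comparison-b} combined with the cell-diameter estimate of Lemma~\ref{whole_lemma}, pushed from the whole plane to $\BB H$ via Lemma~\ref{nikodym}), and from the mean-zero piece $h_2$ to the full wedge field $h = h_1 + h_2$: at Euclidean scale $r = \ep^{-\delta}/2^j$ the radial component $h_1$ is essentially constant with typical value $\approx Q\log(1/r)$ plus Gaussian fluctuations of order $\sqrt{\log(1/r)}$ (from Definition~\ref{wedgedef}), so by Lemmas~\ref{lem:distcomparison} and~\ref{lem:monotonicity} it only modifies the cell parameter $\ep$ by a polynomial-in-$\ep$ factor that can be absorbed into the choice of $\zeta$.

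For Proposition~\ref{lastcell}, the plan is to invoke standard SLE/LQG regularity near the log singularity of the $Q$-quantum wedge to show that the origin cell $\eta([0,\ep])$ swallows a Euclidean neighborhood of $0$ of radius at least a positive power of $\ep$, with polynomially high probability. Since $J = \lfloor\ep^{-\xi}\rfloor$, the point $\ep^{-\delta}i/2^J$ has Euclidean distance to $0$ which is super-polynomially small in $\ep$, and hence lies inside that neighborhood; this gives $D^\ep(\ep^{-\delta}i/2^J,\,0) \leq 1$ on the relevant event.

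The main obstacle is Step~2: tracking the LQG coordinate-change rescaling of $\ep$ cleanly across scales, handling the Gaussian log-singularity contribution of $h_1$ near the origin so that it only perturbs the cell parameter polynomially, and ensuring that the polynomial failure probabilities from each invocation of Proposition~\ref{firststep} are sharp enough to survive the union bound over the $O(\log(1/\ep))$ macroscopic scales after this coordinate change.
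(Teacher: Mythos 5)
Your top-level step is exactly the paper's proof of Proposition~\ref{allsteps}: a triangle inequality combining Proposition~\ref{polysteps} with Proposition~\ref{lastcell}, using that $\ep^{-\delta}i/2^{\lfloor\ep^{-\xi}\rfloor}$ lies in the origin cell so the second term is trivial. If the two inputs are taken as given, nothing more is needed. The gaps are in your sketches of those inputs, which is where you yourself locate the substance. For Proposition~\ref{polysteps}, your bookkeeping of the radial part $h_1$ of the wedge field is wrong in a way that breaks the argument. On the annulus at radius $r_j=\ep^{-\delta}2^{-j}$ with $j$ up to $\ep^{-\xi}$, the radial term is $\approx Q\log(1/r_j)\approx Qj\log 2$, which for $j\gg\log(1/\ep)$ is far larger than any multiple of $\log(1/\ep)$; via Lemma~\ref{lem:distcomparison} it rescales the cell parameter by $r_j^{\sqrt 2 Q}=r_j^3$, which is \emph{not} "a polynomial-in-$\ep$ factor". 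What saves the argument --- and is the key idea of the paper's proof of Proposition~\ref{polysteps} --- is that this factor exactly cancels the factor $C_j^{-3}$ from the LQG coordinate change when the annulus is rescaled to unit size: the paper folds the deterministic singularity $h_3\leq (j+2)Q\log 2$ into the field before rescaling (the random radial fluctuations are controlled once, by Lemma~\ref{hh2bound}, and for radius $<1$ the Bessel term enters with a favorable sign), so every one of the $\lfloor\ep^{-\xi}\rfloor$ scales ends up with the \emph{same} effective parameter $\asymp\ep^{1+\delta'}$. Consequently your dichotomy fails on both sides: the deep scales are not "trivially $O(1)$" (the huge $\ep_j=\ep C_j^{-3}$ is an artifact of having dropped $h_1$; with $h_1$ restored the effective parameter returns to $\asymp\ep^{1+O(\delta)}$, and for $\log(1/\ep)\ll j\ll(\log(1/\ep))^2$ the $\mu_h$-mass of the annulus typically exceeds $\ep$, so one cell cannot cover the crossing), and the union bound must run over all $\asymp\ep^{-\xi}$ scales, each invoking Proposition~\ref{firststep} --- which is precisely why $\xi$ must be taken small relative to the polynomial rate there --- not over $O(\log(1/\ep))$ scales.

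For Proposition~\ref{lastcell}, your justification rests on the claim that $\eta([0,\ep])$ contains a Euclidean ball around $0$ of radius a positive power of $\ep$ with polynomially high probability. This is false, and the paper explicitly warns against it: because of the $Q$-log singularity, the radial process near $0$ is $Qt-X_{2t}$ with $X$ a Bessel(3), so the $\mu_h$-mass of the annulus at radius $e^{-t}$ is of order $e^{-\sqrt 2 X_{2t}}\approx e^{-c\sqrt t}$, and the mass of $B_{\ep^{p}}(0)\cap\BB H$ is typically $\gg\ep$; the probability that $\ep^{p}i\in\eta([0,\ep])$ is not polynomially high. This is exactly why the chain is run all the way down to radius $\approx 2^{-\ep^{-\xi}}$. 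The statement you actually need is the paper's Proposition~\ref{lastcell}, whose proof requires the genuine two-step argument: the $\mu_h$-mass of $B_{\ep^{-1}2^{-\ep^{-\xi}}}(0)\cap\BB H$ is at most $\ep$ (Lemma~\ref{arealastcell}, via the eventual lower bound $X_t\geq t^{1/3}$ for the Bessel process), and $\eta$ fills $B_R(0)\cap\BB H$ before exiting $B_{R/\ep}(0)$ uniformly in $R$ (Proposition~\ref{slecor}). So while your skeleton for Proposition~\ref{allsteps} coincides with the paper's, neither of the two sketched inputs would survive as written.
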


The proof of Proposition~\ref{allsteps} has two main steps; see Figure~\ref{fig-multiscale} for an illustration. The first step is to apply Proposition~\ref{firststep} $\ep^{-\xi}$ times to prove an upper bound for the $D^{\ep}$-distance between the points $\ep^{-\delta} i/2$ and $\ep^{-\delta} i/2^{\lfloor \ep^{-\xi} \rfloor} $ for a small but fixed $\xi > 0$ (Proposition~\ref{polysteps} below). As noted before the statement of Proposition~\ref{firststep} above, the key tool used in this step is the scale invariance of $h_2$. 
The second step is to show that the point $\ep^{-\delta} i/2^{ \lfloor \ep^{-\xi} \rfloor}$ is contained in the origin-containing cell of  $\mcl{G}^{\ep}$ with polynomially high probability as $\ep \rightarrow 0$ (Proposition~\ref{lastcell}).

We note that, for this second step, it is important that we are working with $i/2^{ \lfloor \ep^{-\xi} \rfloor}$ instead of $ \ep^p i $ for some $p > 0$.   Because of the $Q$-log singularity at the origin, the quantum mass of a ball of radius $r$ centered at the origin decays \emph{slower} than any polynomial in $r$ as $r \rta 0$.  This indicates that the point   $ \ep^p i $ will lie outside the cell $\eta([0,\ep])$ with probability tending to $1$ as $\ep \rta 0$.

\begin{lem}
\label{hh2bound}
Let $h_3$ denote the field whose common value on  $\bdy B_{e^{-t}}(0) \cap \BB{H}$ is equal to $Q t$.  Then, for each $\delta , \omega > 0$, we have that $h_2  + h_3 + \log{\ep^{-\omega}} - h$ is equal to a non-negative continuous function on $B_{\ep^{-\delta}}(0) \cap \BB{H}$ with polynomially high probability as $\ep \rta 0$.
\end{lem}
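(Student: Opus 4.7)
The plan is to observe that $h = h_1 + h_2$, so the function in question simplifies to
\[
h_2 + h_3 + \log \ep^{-\omega} - h = (h_3 - h_1) - \omega \log \ep,
\]
which is automatically continuous since both $h_1$ and $h_3$ are defined as continuous radial functions (the former via a Brownian motion and an independent Bessel-3 process, the latter as a deterministic function of the log of the radius). Thus the task reduces to showing the pointwise lower bound $h_1 - h_3 \leq \omega \log(\ep^{-1})$ on $B_{\ep^{-\delta}}(0) \cap \BB H$ with polynomially high probability as $\ep \to 0$.

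By the explicit form of $h_1$ in Definition~\ref{wedgedef}, the radial function $h_1 - h_3$ restricted to $\bdy B_{e^{-t}}(0) \cap \BB H$ equals $B_{-2t}$ for $t<0$ and $-X_{2t}$ for $t>0$. In particular, on the region $|z| \leq 1$ (i.e., $t \geq 0$) we have $h_1 - h_3 = -X_{2t} \leq 0$, so the desired inequality is trivial there. On the annulus $1 \leq |z| \leq \ep^{-\delta}$ (i.e., $t \in [\delta \log \ep, 0)$), the bound becomes
\[
\sup_{s \in [0, 2\delta \log(\ep^{-1})]} B_s \leq \omega \log(\ep^{-1}).
\]

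By the reflection principle, the left-hand side has the law of $\sqrt{2\delta \log(\ep^{-1})} \cdot |N|$ for a standard Gaussian $N$, and a direct Gaussian tail bound gives that the complementary event has probability at most
\[
2 \exp\!\left(-\frac{\omega^2 \log(\ep^{-1})}{4\delta}\right) = 2\,\ep^{\omega^2/(4\delta)},
\]
which is polynomial in $\ep$. Hence the required bound holds with polynomially high probability as $\ep \to 0$, and since both $h_1$ and $h_3$ are constant on each semicircle $\bdy B_{e^{-t}}(0) \cap \BB H$, the pointwise bound on semicircles upgrades immediately to the uniform bound on $B_{\ep^{-\delta}}(0) \cap \BB H$.

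I do not anticipate any real obstacle here: the whole lemma is essentially a bookkeeping calculation combined with a Gaussian tail estimate for the maximum of a Brownian motion on a time interval of length $O(\log(\ep^{-1}))$. The only subtlety worth flagging is keeping track of signs and the fact that $\log \ep^{-\omega} = \omega \log(\ep^{-1}) > 0$, which is precisely what provides the slack needed to absorb the Brownian fluctuations; the argument works for any $\omega > 0$ provided $\delta$ is fixed, with the exponent of polynomial decay scaling like $\omega^2/\delta$.
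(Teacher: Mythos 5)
Your proposal is correct and follows essentially the same route as the paper: write $h = h_1 + h_2$, reduce to bounding $h_1 - h_3$ on the relevant semicircles (trivial inside the unit disk by non-negativity of the Bessel process, and a Brownian supremum estimate on the annulus up to radius $\ep^{-\delta}$), and conclude via the reflection principle and a Gaussian tail bound giving polynomial decay $\ep^{\omega^2/(4\delta)}$. The paper's proof is just a one-line version of exactly this argument, so no further comment is needed.
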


\begin{proof}
By Definition~\ref{wedgedef}, this follows from the fact that the probability that $\sup_{t \in [0, \log{\ep^{-\delta}}]} B_{2t}$ is greater than $\log{\ep^{-\omega}}$ is bounded from above by a positive power of $\ep$.
\end{proof}

\begin{prop} 
For each $\zeta \in (0,1)$, we have, for $\delta, \xi > 0$ sufficiently small (depending on $\zeta$),
\begin{equation}
D^{\ep}\left( \frac{\ep^{-\delta} i}{2} , \frac{\ep^{-\delta}  i}{ 2^{\left\lfloor \ep^{-\xi} \right\rfloor}} \right) \leq \ep^{- 1/(d - \zeta)}
\label{xipolysteps}
\end{equation}
with polynomially high probability as $\ep \rightarrow 0$.
\label{polysteps}
\end{prop}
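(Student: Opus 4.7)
The plan is to construct a chain of points $z_j := \ep^{-\delta} i/2^j$ for $j = 1,\dots,\lfloor \ep^{-\xi}\rfloor$ along the imaginary axis shrinking geometrically toward the origin, bound the $D^\ep$-distance between consecutive pairs $(z_j,z_{j+1})$ inside a localized annular neighborhood, and then sum the telescoped bounds. The key miracle is that the scale invariance of $h_2$ and of chordal SLE$_8$ reduces the distance estimate at every scale to the \emph{same} unit-scale estimate provided by Proposition~\ref{firststep}, thanks to an exact cancellation between the LQG change-of-coordinates factor and the constant shift coming from Lemma~\ref{hh2bound}.

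More precisely, set $c_j := \ep^{-\delta}/2^j$ and let $U_j := (B_{2c_j}(0)\setminus \overline{B_{c_j/4}(0)})\cap \BB H$, so that under the scaling $\Phi_j(w) = c_j w$ the preimage $V := \Phi_j^{-1}(U_j)$ is independent of $j$ and $\ep$. On $U_j$ the deterministic function $h_3(z) = -Q\log|z|$ equals $-Q\log c_j + O(1)$, and $U_j\subset B_{\ep^{-\delta}}(0)\cap \BB H$ for all $j$. Hence by Lemma~\ref{hh2bound}, with polynomially high probability one has $h \leq h_2 - Q\log c_j + \log \ep^{-\omega} + O(1)$ on every $U_j$ simultaneously. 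By Lemmas~\ref{lem:monotonicity} and~\ref{lem:distcomparison},
\[
\wt D^{\ep}_{\BB H,h,\eta}(z_j, z_{j+1};U_j) \;\leq\; \wt D^{\,\ep\, c_j^{\sqrt 2 Q}\, \ep^{\sqrt 2\omega}\,O(1)}_{\BB H,h_2,\eta}(z_j, z_{j+1};U_j).
\]
The scale invariance $h_2\circ \Phi_j \stackrel{d}{=} h_2$, the scale invariance of $\eta$, and the LQG change of coordinates then show that the right-hand side agrees in law with
\[
\wt D^{\,\ep^{1+\sqrt 2\omega}\,O(1)}_{\BB H,h_2,\eta}(i, i/2;V),
\]
the factor $c_j^{\sqrt 2 Q}$ being cancelled precisely by the coordinate-change factor $c_j^{-\sqrt 2 Q}$. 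By Proposition~\ref{firststep}, this unit-scale distance is at most $\ep^{-(1+\sqrt 2\omega)/(d-\zeta')}$ with polynomially high probability for any $\zeta'\in(\zeta,1)$.

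Taking a union bound over the $O(\ep^{-\xi})$ values of $j$ (feasible provided $\xi$ is chosen strictly smaller than the polynomial rate furnished by Proposition~\ref{firststep}), summing along the telescoping chain, and converting from $\wt D^{\ep}$ to $D^{\ep}$ via Lemma~\ref{dist-comparison-b}(ii) at the cost of a factor of $2$, I obtain
\[
D^{\ep}\!\left(z_1,\,z_{\lfloor \ep^{-\xi}\rfloor}\right) \;\leq\; 2\,\ep^{-\xi}\cdot O\!\left(\ep^{-(1+\sqrt 2\omega)/(d-\zeta')}\right),
\]
which is bounded by $\ep^{-1/(d-\zeta)}$ once $\delta,\xi,\omega>0$ are chosen sufficiently small and $\zeta'>\zeta$ is close enough to $\zeta$, since the resulting exponent tends to $1/d$ from above as $\xi,\omega\to 0$ and $\zeta'\to \zeta$.

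The main technical obstacle I anticipate lies in the conversion from $\wt D^{\ep}$ to $D^{\ep}$: Lemma~\ref{dist-comparison-b}(ii) requires that every SLE cell $\eta([x-\ep,x])$ meeting $U_j$ have Euclidean diameter much smaller than $c_j$, so that it remains in a slight enlargement of $U_j$. This has to hold uniformly across all $j\leq \ep^{-\xi}$, which is delicate near the largest values of $j$, where $c_j$ becomes small; it should be deliverable by applying the same scale-invariance substitution used above to reduce to a unit-scale cell-diameter estimate, which follows from an analog of Lemma~\ref{whole_lemma} in the $h_2$-setting together with local absolute continuity away from the origin.
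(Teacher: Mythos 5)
Your proposal is correct and takes essentially the same route as the paper's proof: a geometric chain of points $\ep^{-\delta}i/2^j$, the bound $h \le h_2 + h_3 + \log\ep^{-\omega}$ from Lemma~\ref{hh2bound}, the monotonicity and constant-shift properties of $\wt D$ (Lemma~\ref{lem}), scale invariance of $h_2$ and of chordal SLE$_8$ together with the LQG coordinate change to reduce every scale to the unit-scale bound of Proposition~\ref{firststep}, and a union bound over the $\ep^{-\xi}$ scales with $\delta,\xi,\omega,\zeta'$ chosen small. The one obstacle you flag --- controlling cell diameters to pass from $\wt D^\ep$ back to $D^\ep$ --- is in fact a non-issue: apply Lemma~\ref{dist-comparison-b} with $V = \BB H$, so the containment hypothesis is vacuous and $D^\ep(z_j,z_{j+1}) \le 2 \wt D^\ep_{\BB H, h,\eta}(z_j,z_{j+1};U_j)$ holds deterministically, which is exactly how the paper handles this step (converting to $\wt D$ at the level of the full domain and only then restricting to the annuli).
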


\begin{proof} 
We have
\allb
  D^{\ep}\left( \frac{\ep^{-\delta} i}{2} , \frac{\ep^{-\delta}  i}{2^{\left\lfloor \ep^{-\xi} \right\rfloor}}\right) 
 &\leq 2   \wt{D}_{\BB H , h , \eta}^{\ep}\left( \frac{\ep^{-\delta}  i}{2} , \frac{\ep^{-\delta}  i}{2^{\left\lfloor \ep^{-\xi} \right\rfloor}}\right) 
\quad \text{by Lemma~\ref{dist-comparison-b} } \notag \\ 
 &\leq 2   \wt{D}_{\BB H , h , \eta}^{\ep}\left( \frac{\ep^{-\delta}  i}{2} , \frac{\ep^{-\delta}  i}{2^{\left\lfloor \ep^{-\xi} \right\rfloor}}; B_{\ep^{-\delta}}(0) \cap \BB{H} \right) .\notag
\alle
For any fixed $\omega>0$, Lemmas~\ref{lem:monotonicity} and~\ref{hh2bound} imply that this latter quantity is at most  
 \allb
&2   \wt{D}_{\BB H , h_2 + h_3 + \log \ep^{-\omega} , \eta}^{\ep}\left( \frac{\ep^{-\delta}  i}{2} , \frac{\ep^{-\delta}  i}{2^{\left\lfloor \ep^{-\xi} \right\rfloor}}; B_{\ep^{-\delta}}(0) \cap \BB{H} \right) 
\notag \\
&\qquad2   \wt{D}_{\BB H , h_2 + h_3 , \eta}^{\ep^{1 + \sqrt{2} \omega}}\left( \frac{\ep^{-\delta}  i}{2} , \frac{\ep^{-\delta}  i}{2^{\left\lfloor \ep^{-\xi} \right\rfloor}}; B_{\ep^{-\delta}}(0) \cap \BB{H} \right) 
\notag \qquad \text{by Lemma~\ref{lem:distcomparison}} \\
&\qquad  \leq 2 \sum_{j=1}^{\left\lfloor \ep^{-\xi}-1 \right\rfloor} \wt{D}_{\BB H , h_2 + h_3 , \eta}^{\ep^{1 + \sqrt{2} \omega}}\left( \frac{\ep^{-\delta}  i}{2^j} , \frac{\ep^{-\delta} i}{2^{j+1}} ;B_{\ep^{-\delta}}(0) \cap \BB{H}\right) \notag \\
&\qquad  \leq 2 \sum_{j=1}^{\left\lfloor \ep^{-\xi}-1 \right\rfloor} \wt{D}_{\BB H , h_2 + (j+2) Q \log{2} , \eta}^{\ep^{1 + \sqrt{2} \omega}}\left( \frac{\ep^{-\delta}  i}{2^j} , \frac{\ep^{-\delta} i}{2^{j+1}} ; \left( B_{\ep^{-\delta} 2^{-j+1}}(0) \setminus B_{\ep^{-\delta} 2^{-j-2}}(0) \right) \cap \BB{H}\right) 
\label{sumscales}
\alle
with polynomially high probability as $\ep \rta 0$.
By the scale invariance of the law of $h_2$, the LQG coordinate change formula~\cite[Proposition 2.1]{shef-kpz}, and the scale invariance of the law of $\eta$, for $j\in\BB N$ the distance
\[
\wt{D}_{\BB H , h_2  + Q (j+2) \log{2}, \eta}^{\ep^{1 + \sqrt{2} \omega}}\left( \frac{\ep^{-\delta} i}{2^j} , \frac{\ep^{-\delta} i}{2^{j+1}} ;\left( B_{\ep^{-\delta} 2^{-j+1}}(0) \setminus B_{\ep^{-\delta} 2^{-j-2}}(0) \right) \cap \BB{H}\right)  \]
is equal in distribution to
\[ 
\wt{D}_{\BB H , h_2 + Q \log{8} + Q\log{\ep^{-\delta}}, \eta}^{\ep^{1 + \sqrt{2} \omega}}\left( \frac{ i}{2 } , \frac{i}{4} ; \left( B_{1}(0) \setminus B_{1/8}(0) \right) \cap \BB{H}\right) , 
\]
which by Lemma \ref{lem:distcomparison} is equal to 
\[ 
\wt{D}_{\BB H , h_2, \eta}^{ 8^{-\sqrt 2 Q} \ep^{1 + \delta'} }\left( \frac{ i}{2 } , \frac{i}{4} ; \left( B_{1}(0) \setminus B_{1/8}(0) \right) \cap \BB{H}\right) 
\]
for some $\delta' > 0$ which can be made arbitrarily small by choosing $\delta,\omega>0$ sufficiently small. 
Thus, Proposition \ref{firststep} and a union bound over all $j\in [1,\left\lfloor \ep^{-\xi}-1 \right\rfloor]_{\BB Z}$ shows that for any $\zeta'\in (0,1)$ and $\delta > 0$, if we choose $\xi$ sufficiently small (depending on $\zeta'$) then~\eqref{sumscales} is bounded by $ \ep^{ - \xi -\frac{1+ \delta'}{d - \zeta'}}$ with polynomially high probability as $\ep \rightarrow 0$. In particular, we can choose $\xi$, $\delta$, $\omega$ and $\zeta'$ sufficiently small that  $\xi + (1+\delta)/(d-\zeta')$ is less than $1/(d-\zeta)$. 
\end{proof}

We now turn our attention to showing that $ \eta([0,\ep])$ contains $\ep^{-\delta}  i/2^{\lfloor \ep^{-\xi} \rfloor}$ with polynomially high probability as $\ep \rta 0$. We will prove the following slightly stronger statement.  
 
\begin{prop}
For any $\xi>0$, the set  $B_{2^{-\ep^{-\xi}}}(0) \cap \BB{H}$ is contained in $\eta([0,\ep])$ with polynomially high probability as $\ep \rightarrow 0$.  
\label{lastcell}
\end{prop}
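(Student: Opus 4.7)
The plan is to use the scale invariance of the $Q$-quantum wedge to reduce to a unit-scale containment event, which will hold with polynomially high probability because the space-filling SLE$_8$ almost surely fills any fixed bounded region in finite quantum time.

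Set $r := 2^{-\ep^{-\xi}}$ and apply the coordinate change $w \mapsto w/r$. Under this change, $B_r(0) \cap \BB H$ is sent to $B_1(0) \cap \BB H$, the field $h$ transforms to $\wt h(w) := h(rw) + Q \log r$, and the curve $\eta$ becomes $\wt\eta(t) := \eta(t)/r$. Since LQG area is preserved by this change of coordinates, $\wt\eta$ remains a chordal SLE$_8$ from $0$ to $\infty$ parametrized by $\mu_{\wt h}$-mass, so the target event becomes $\{B_1(0) \cap \BB H \subset \wt\eta([0,\ep])\}$. Next, I would shift $\wt h$ to the circle-average embedding at radius $1$ by adding the constant $c := X_{2\log(1/r)}$, where $X$ is the three-dimensional Bessel process from Definition~\ref{wedgedef}; a direct computation yields $\wt h_1(0) = -X_{2\log(1/r)}$, which this shift cancels. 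Adding $c$ to the field rescales the LQG area by $e^{\sqrt 2 c}$, so letting $\wh\eta$ be the reparametrization of $\wt\eta$ by $\mu_{\wh h}$-mass with $\wh h := \wt h + c$, we obtain $\wt\eta([0,\ep]) = \wh\eta([0, \ep e^{\sqrt 2 c}])$. By the scale invariance of the $Q$-wedge as a quantum surface, the pair $(\wh h, \wh\eta)$ should have the same joint law as $(h, \eta)$, reducing the target event to $\{B_1(0) \cap \BB H \subset \eta([0,T])\}$ with the random time $T := \ep \cdot e^{\sqrt 2 X_{2\log(1/r)}}$.

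I would then combine two ingredients. Standard small-value estimates for a three-dimensional Bessel process give $\BB P[X_t < C\log(1/\ep)] \preceq (\log(1/\ep))^3 / t^{3/2}$; with $t = 2\log(1/r) = 2\log 2 \cdot \ep^{-\xi}$ this is polynomially small in $\ep$, so $X_{2\log(1/r)} \geq C\log(1/\ep)$ holds with polynomially high probability for any fixed $C > 0$. On this event, $T \geq \ep^{1 - C\sqrt 2}$, which is any desired polynomial in $\ep^{-1}$ by choice of $C$. Since $\eta$ is space-filling, the fill time $T^* := \inf\{t \geq 0 : B_1(0) \cap \BB H \subset \eta([0,t])\}$ is almost surely finite, and a polynomial tail bound on $T^*$ (obtainable from LQG area estimates for bounded regions away from the cusp) then yields the desired polynomially high probability.

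The main obstacle is the scale-invariance identification in the second paragraph: the $Q$-wedge is only scale invariant as an equivalence class of quantum surfaces, so one must carefully verify that after the random shift by $c = X_{2\log(1/r)}$, which is a function of the BES$^3$ already present in $h$, the resulting pair $(\wh h, \wh\eta)$ truly has the law of a $Q$-quantum wedge decorated by an independent chordal SLE$_8$ parametrized by LQG mass. This can be handled by invoking the structural results for $Q$-quantum wedges in~\cite{wedges} together with the Markov property of the Bessel process, or, if needed, by a quantitative absolute-continuity argument on the compact region $\ol{B_1(0) \cap \BB H}$.
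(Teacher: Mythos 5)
There is a genuine gap at the central reduction step. You rescale space by the deterministic factor $r = 2^{-\ep^{-\xi}}$ and then add the random constant $c = X_{2\log(1/r)}$, claiming that the resulting pair $(\wh h , \wh\eta)$ has the law of the original wedge--SLE pair. This is false for the critical ($\alpha = Q$) wedge. Computing the radial part of $\wh h$: for $s>0$ one gets $\wh h_{e^{-s}}(0) = -\left( X_{2\log(1/r)+2s} - X_{2\log(1/r)} \right) + Q s$, so the relevant radial process is the increment of the Bessel process after the late time $2\log(1/r)$, started from the high level $X_{2\log(1/r)}$ and recentered. This is \emph{not} a BES$^3$ started from $0$ (as Definition~\ref{wedgedef} requires); indeed, since the future infimum of a BES$^3$ after time $t_0$ is a.s.\ strictly below $X_{t_0}$, the increment process dips below $0$ with positive probability, whereas $-X'_{2s}+Qs$ with $X'$ a BES$^3$ from $0$ satisfies $-X'_{2s} < 0$ for all $s>0$ a.s. Equivalently, $\wh h$ fails the embedding condition $1 = \inf\{\rho>0 : \wh h_\rho(0) + Q\log\rho = 0\}$ with positive probability, even though $\wh h_1(0)=0$. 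Consequently the two laws are not even mutually absolutely continuous on any neighborhood of the origin, so your proposed fallback (Markov property of the Bessel process, or absolute continuity on $\ol{B_1(0)\cap\BB H}$, a region whose closure contains $0$) cannot produce the bound you need: you would need the law of $\wh h$ to be absolutely continuous with respect to that of $h$ with a controlled Radon--Nikodym derivative, and it is not. (As $r\to 0$ the rescaled radial part in fact looks like $Q$-log singularity plus Brownian motion, i.e.\ like $h_2+h_3$ plus noise, not like the wedge.) A secondary issue: even granting the reduction, the asserted polynomial tail for the fill time $T^*$ of $B_1(0)\cap\BB H$ is not just an ``area estimate away from the cusp''---$T^*$ includes the area of the filled region near the cusp and the filled region can extend far beyond $B_1$, so controlling it already requires an SLE absorption estimate of the type in Proposition~\ref{slecor} together with area tail bounds, i.e.\ the same work you were trying to avoid.

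The way around this difficulty is to decouple the curve from the field rather than to invoke scale invariance of the decorated surface: show (a) with polynomially high probability $\mu_h\left( B_{\ep^{-1}2^{-\ep^{-\xi}}}(0)\cap\BB H \right) \leq \ep$, which uses only the radial Bessel process (via a lower bound such as $X_t \geq t^{1/3}$ for large $t$, valid with polynomially high probability since $2\log(1/r) \asymp \ep^{-\xi}$) together with moment bounds and exact scale invariance of the mean-zero part $h_2$; and (b) the purely SLE statement that, with polynomially high probability, $\eta$ absorbs $B_{2^{-\ep^{-\xi}}}(0)\cap\BB H$ before exiting $B_{\ep^{-1}2^{-\ep^{-\xi}}}(0)$, uniformly over the radius, which follows by iterating a positive-probability absorption event at dyadic-type scales using the domain Markov property. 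On the intersection of these events the ball is filled by time $\ep$ in the quantum parametrization, which is exactly the claim. Note that your use of the Bessel lower bound to make $T$ polynomially large plays the same role as step (a), but the exact-law identification you route it through does not hold.
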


Note that we dropped the factor of $\ep^{-\delta}$ since, for any $\xi' < \xi$, the ball of radius $\ep^{-\delta} 2^{-\ep^{-\xi}}$ is contained in the ball of radius $2^{-\ep^{-\xi'}}$ for $\ep$ sufficiently small.
 
To prove Proposition~\ref{lastcell}, we will show that, with polynomially high probability as $\ep \rightarrow 0$, two events occur: 
\begin{enumerate}[label=(\alph*)]
\item  \label{item-lastcell-area}
 the set  $B_{\ep^{-1} 2^{-\ep^{-\xi}}}(0) \cap \BB{H}$  has $\mu_h$-area $\leq \ep$, and 
\item  \label{item-lastcell-absorb}
$\eta$ absorbs $B_{2^{-\ep^{-\xi}}}(0) \cap \BB{H}$ before hitting $\bdy B_{\ep^{-1} 2^{-\ep^{-\xi}}}(0) \cap \BB{H}$. 
\end{enumerate}
This implies the proposition since, on the intersection of the events~\ref{item-lastcell-area} and~\ref{item-lastcell-absorb},  $B_{2^{-\ep^{-\xi}}}(0) \cap \BB{H}$ is necessarily contained in $\eta([0,\ep])$.
First, we prove that the event~\ref{item-lastcell-area} occurs with polynomially high probability as $\ep \rightarrow 0$.

\begin{lem}
For any fixed $\xi>0$, it holds with polynomially high probability as $\ep\rta 0$ that 
$\mu_h\left( B_{\ep^{-1} 2^{-\ep^{-\xi}}}(0) \cap \BB{H} \right) \leq \ep$.
\label{arealastcell}
\end{lem}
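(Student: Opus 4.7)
The plan is to use the decomposition $h = h_1 + h_2$ from Definition~\ref{wedgedef}, together with a dyadic partition of $B_{r_0}(0) \cap \BB H$ (where $r_0 := \ep^{-1} 2^{-\ep^{-\xi}}$) into half-annuli $A_k := (B_{2^{-k} r_0}(0) \setminus B_{2^{-k-1} r_0}(0)) \cap \BB H$ for $k \in \BB N_0$. On each $A_k$, the circle-average part equals $h_1(z) = -X_{-2\log|z|} + Q(-\log|z|)$ (we are in the Bessel regime of Definition~\ref{wedgedef} since $r_0 < 1$), so
\[
\sup_{z \in A_k} h_1(z) \leq -M_k + Q t_k, \qquad M_k := \inf_{u \in [s_k, s_k + 2\log 2]} X_u,
\]
with $s_k := -2\log(2^{-k} r_0)$ and $t_k := -\log(2^{-k-1} r_0)$. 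Combining the crude bound $\mu_h(A_k) \leq e^{\sqrt 2 \sup_{A_k} h_1} \mu_{h_2}(A_k)$ with the scaling $\mu_{h_2}(\lambda A) \eqD \lambda^3 \mu_{h_2}(A)$ (which follows from $h_2(\lambda \cdot) \eqD h_2$ and the $\sqrt 2$-LQG area dimension $2 + \gamma^2/2 = 3$) and the identity $\sqrt 2 Q = 3$, one obtains
\[
\mu_h(A_k) \leq 8 \, e^{-\sqrt 2 M_k} Z_k, \qquad \mu_h(B_{r_0}(0) \cap \BB H) \leq 8 \sum_{k \geq 0} e^{-\sqrt 2 M_k} Z_k,
\]
where $Z_k := \mu_{h_2}(A_k)/(2^{-k} r_0)^3$ has the same marginal law as $\mu_{h_2}(A^*)$ for the unit half-annulus $A^* := (B_1(0) \setminus B_{1/2}(0)) \cap \BB H$, and $M_k$ is independent of $Z_k$ since $X$ is independent of $h_2$.

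The heuristic reason this sum is small is that $s_k \geq 2\log r_0^{-1} \sim 2\ep^{-\xi} \log 2$, so typical values of $M_k$ (as $X$ is a Bessel~$3$ process) are of order $\sqrt{s_k}$, which is at least $\ep^{-\xi/2}$; hence each $e^{-\sqrt 2 M_k}$ is super-polynomially small in $\ep$. Combined with the fact that $Z_k$ has finite moments of every order $q < 4/\gamma^2 = 2$ (by standard GMC theory) and a union bound to ensure $\max_{k \leq K} Z_k \leq (\log \ep^{-1})^A$ for suitable $A$, the whole sum is super-polynomially small on a high-probability event.

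The main obstacle is that this crude bound is far from tight at the (a.s.\ infinitely many) values of $k$ for which $X_u$ dips close to $0$ on the interval $[s_k, s_k + 2\log 2]$: after the time-change $Y_t := X_{e^t}/e^{t/2}$, which is an ergodic diffusion on $(0,\infty)$, such dips occur recurrently, and indeed $\BB E\big[\sum_k e^{-\sqrt 2 M_k}\big] = \infty$ via the hitting estimate $\BB P[M_k \leq m] \preceq m/\sqrt{s_k}$ together with $\sum_k 1/\sqrt{s_k} = \infty$. To handle this I would refine the bound on $\mu_h(A_k)$ using the local behavior of $X$ near its minimum: a Bessel~$3$ dip to level $\epsilon > 0$ has duration of order $\epsilon^2$ in the $u$-variable (by Brownian scaling), so the sub-annulus of $A_k$ where $e^{\sqrt 2 h_1}$ is large has $\mu_{h_2}$-mass of order only $\epsilon^2 (2^{-k} r_0)^3$ in expectation. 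Incorporating this cancellation into the dyadic bound ultimately yields a geometric sum of size $r_0 \cdot \op{polylog}(\ep^{-1})$ — super-polynomially small in $\ep$ since $r_0 = \ep^{-1} 2^{-\ep^{-\xi}}$. Making this quantitative, via Bessel hitting estimates and GMC moment bounds, is what I expect to be the main technical work.
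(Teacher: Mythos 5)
Your skeleton (the decomposition $h=h_1+h_2$, dyadic half-annuli, the bound $\mu_h(A_k)\leq 8 e^{-\sqrt 2 M_k}Z_k$ with $Z_k \eqD \mu_{h_2}(A^*)$ by scale invariance, and the independence of the Bessel part from $h_2$) is the same as in the paper's proof, but the two probabilistic inputs you then invoke are incorrect, and they lead you to manufacture an obstacle that is not there while omitting the step that is actually needed. The ``main obstacle'' is spurious: the Bessel~$3$ process is transient, $X_u\rta\infty$ a.s., so it does \emph{not} dip near $0$ in infinitely many of your fixed-length windows $[s_k,s_k+2\log 2]$ (which tile $[s_0,\infty)$); ergodicity of $Y_t=X_{e^t}/e^{t/2}$ only says that $X_u$ drops below $\epsilon\sqrt{u}$ at arbitrarily large times, which is harmless since $\sqrt{s_k}\gtrsim \ep^{-\xi/2}$. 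Moreover your hitting estimate $\BB P[M_k\leq m]\preceq m/\sqrt{s_k}$ is the estimate for a window of length comparable to $s_k$; for a window of fixed length one essentially needs $X_{s_k}$ itself to be of size $\lesssim m\vee 1$, which gives $\BB P[M_k\leq m]\preceq (1+m)^3 s_k^{-3/2}$ and hence $\sum_k \BB E[e^{-\sqrt 2 M_k}]\preceq \sum_k s_k^{-3/2}\preceq s_0^{-1/2}<\infty$. So there is no divergent first moment, and the ``duration-of-dips'' cancellation you sketch is not needed. What \emph{is} needed, and is never stated precisely in your outline, is a uniform-in-$k$ lower bound on $M_k$ holding with polynomially high probability; this is exactly the paper's first step: off an event of probability $\preceq e^{-c k_0}$ one has $X_t\geq t^{1/3}$ for all $t\geq e^{k_0}$, and taking $e^{k_0}\asymp \ep^{-\xi}$ makes the failure probability polynomially small while every factor $e^{-\sqrt 2 p M_k}$ becomes $e^{-c\ep^{-\xi/3}}$, superpolynomially small.

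The moment input for $Z_k$ is also wrong. The half-annuli touch $\bdy\BB H$, and near the real line the covariance of the free-boundary field behaves like $-2\log|x-y|$, so the usual bulk bound ``finite moments for $q<4/\gamma^2=2$'' does not apply; in fact for $\gamma=\sqrt 2$ one has $\BB E[d\mu_{h_2}(z)]\asymp (\op{Im} z)^{-1}$ near $\BB R$, so already $\BB E[Z_k]=\infty$. Only a small positive moment is available, which is precisely what the paper proves and uses (Lemma~\ref{lem-free-bdy-moment}). Relatedly, your union bound ``$\max_{k\leq K}Z_k\leq(\log\ep^{-1})^A$'' cannot work: the sum runs over infinitely many scales (and in any case over polynomially many), so a polylogarithmic threshold with only fractional moments does not survive the union bound. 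The paper's assembly fixes both problems at once: on the good Bessel event, bound $[\mu_h(B_{r_0}(0)\cap\BB H)]^p\leq \sum_k e^{-\sqrt 2 p M_k} Z_k^p$ using subadditivity of $x\mapsto x^p$ for a small $p\in(0,1)$, take expectations using the scale invariance of $h_2$ and the small-moment bound, and apply Chebyshev; the factors $e^{-\sqrt 2 p\, s_k^{1/3}}$ make the sum superpolynomially small even after multiplying by $\ep^{-p}$. Your approach could be repaired along exactly these lines, but as written the central difficulty you identify does not exist, the proposed resolution is left vague, and the two steps that genuinely require proof (the uniform Bessel lower bound and the boundary-touching moment bound) are missing or asserted incorrectly.
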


We first need the following elementary estimate for the LQG area measure.

\begin{lem} \label{lem-free-bdy-moment}
Let $h $ be a free-boundary GFF on $\BB H$, let $h_2 = h - h_{|\cdot|}(0)$ be its mean-zero part, and let $\mu_{h_2}$ be the associated $\gamma$-LQG measure for some $\gamma \in (0,2)$. 
There exists $p > 0$ such that for every $r \in (0,1)$, 
\eqb
\BB E\left( \left[ \mu_{h_2}\left( (\BB D\setminus B_r(0)) \cap \BB H \right) \right]^p \right) < \infty .
\eqe
\end{lem}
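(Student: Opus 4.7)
The plan is to dyadically decompose the region $(\BB D \setminus B_r(0)) \cap \BB H$ into the semi-annuli $A_k := (B_{2^{-k}}(0) \setminus B_{2^{-k-1}}(0)) \cap \BB H$ for $k \in \BB N_0$, and to exploit the scale invariance of $h_2$ to control $\mu_{h_2}(A_k)$ in a $k$-dependent but $r$-independent way. Since $(\BB D \setminus B_r(0)) \cap \BB H \subset \bigcup_{k \geq 0} A_k$ for every $r \in (0,1)$, any bound on $\BB E[(\sum_k \mu_{h_2}(A_k))^p]$ will automatically be uniform in $r$.

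The key deterministic input is the scaling identity $\mu_{h_2}(CA) \overset{d}{=} C^{\gamma Q} \mu_{h_2}(A)$ for every $C > 0$ and every Borel $A \subset \BB H$. This follows by combining the LQG coordinate change formula applied to $\phi(z) = Cz$, the identity $\mu_{h+c} = e^{\gamma c}\mu_h$, and the distributional scale invariance $h_2(C\cdot) \overset{d}{=} h_2(\cdot)$. The latter holds because $h_2$ is the projection of $h$ onto $\mcl H_2(\BB H)$, and the scaling of the free-boundary GFF only alters $h$ by an additive element of $\mcl H_1(\BB H)$, which lies in the kernel of the projection. Applying the identity with $A = A_0$ and $C = 2^{-k}$ then gives $\mu_{h_2}(A_k) \overset{d}{=} 2^{-k\gamma Q} \mu_{h_2}(A_0)$.

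Since $\overline{A_0}$ is compact and bounded away from $0$, the field $h_2|_{A_0}$ is a log-correlated Gaussian field in the usual sense, so standard Gaussian multiplicative chaos moment bounds (see, e.g., \cite{rhodes-vargas-review,berestycki-gmt-elementary}) give $\BB E[\mu_{h_2}(A_0)^p] < \infty$ for all $p \in (0, 4/\gamma^2)$. Fix such a $p$ with $p < 1$. Using the elementary inequality $(\sum_k a_k)^p \leq \sum_k a_k^p$ for $a_k \geq 0$ together with monotone convergence and the scaling relation,
\[
\BB E\left[ \mu_{h_2}((\BB D \setminus B_r(0)) \cap \BB H)^p \right] \leq \sum_{k \geq 0} \BB E[\mu_{h_2}(A_k)^p] = \BB E[\mu_{h_2}(A_0)^p] \sum_{k \geq 0} 2^{-kp\gamma Q} < \infty,
\]
where the geometric sum converges because $\gamma Q = 2 + \gamma^2/2 > 0$. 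This bound is independent of $r$, as required.

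The main obstacle is confirming the scaling law $\mu_{h_2}(CA) \overset{d}{=} C^{\gamma Q} \mu_{h_2}(A)$ rigorously, which requires careful bookkeeping of the LQG coordinate change formula together with the radial-lateral decomposition that makes $h_2$ exactly scale invariant. Once this is in hand, the remaining ingredients, namely GMC moment bounds on a compact set bounded away from $0$ and the $p$-subadditivity trick, are entirely routine.
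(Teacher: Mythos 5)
Your outer argument is fine, and in fact it mirrors how the paper itself later uses this lemma: the exact scale invariance of $h_2$, the coordinate-change identity $\mu_{h_2}(CA)\overset{d}{=}C^{\gamma Q}\mu_{h_2}(A)$, and the $p$-subadditivity over dyadic semi-annuli are all correct (this is essentially the computation in the proof of Lemma~\ref{arealastcell}). The genuine gap is at the single-scale input $\BB E[\mu_{h_2}(A_0)^p]<\infty$, which is exactly where the content of the lemma lies. The semi-annulus $A_0=(B_1(0)\setminus B_{1/2}(0))\cap\BB H$ is bounded away from $0$, but the relevant boundary is $\bdy\BB H$, which $A_0$ touches. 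The covariance of the free-boundary GFF is $-\log|x-y|-\log|x-\bar y|$, and subtracting the semicircle-average process only perturbs this by terms bounded on $A_0$; when both points are close to the real line the kernel behaves like $-2\log|x-y|$, so $h_2|_{A_0}$ is \emph{not} a log-correlated field ``in the usual sense,'' and the interior GMC moment bounds you invoke do not apply. Indeed the claimed range $p\in(0,4/\gamma^2)$ is false: since $\BB E[\mu_{h_2}(dz)]\asymp (\op{Im} z)^{-\gamma^2/2}\,dz$ near $\bdy\BB H$, already the \emph{first} moment of $\mu_{h_2}(A_0)$ is infinite for $\gamma\geq\sqrt 2$ --- in particular for $\gamma=\sqrt 2$, the case this paper needs --- even though $1<4/\gamma^2$ there.

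So your proof reduces the lemma (uniformly in $r$, which is more than is needed) to its special case $r=1/2$, and then asserts that case with a justification that does not hold; the key estimate remains unproven. To close the gap you need a boundary-aware moment bound, which is how the paper argues: it bounds $\mu_{h_2}$ on the region by $e^{\gamma\sup_{s\in[r,1]}|h_s(0)|}\mu_h$, separates the two factors by H\"older, uses that $t\mapsto h_{e^{-t}}(0)$ is a linear Brownian motion (so the sup has finite exponential moments), and invokes the proof of \cite[Proposition 2.3]{rhodes-vargas-review} to get a finite positive moment for the total mass of the free-boundary GFF measure, a statement that does account for the doubled logarithmic singularity along the boundary. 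Either cite such a boundary GMC result for $\mu_h$ of a set touching $\bdy\BB H$ and then handle the radial part as above, or reproduce that H\"older argument; with that replacement your dyadic/scaling scheme goes through.
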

\begin{proof}
The field $h_2$ does not depend on the choice of additive constant for $h$, so we can normalize so that $h_1(0) = 0$, where here $h_s(0)$ is the semicircle average of $h$ over $\bdy B_s(0) \cap\BB H$.  
By H\"older's inequality, for $p>0$ and $q > 1$, 
\allb \label{eqn-free-bdy-moment-split}
&\BB E\left( \left[ \mu_{h_2}\left( (\BB D\setminus B_r(0)) \cap \BB H \right) \right]^p \right) \notag \\
&\qquad \leq \BB E\left( \left[ \mu_h\left( (\BB D\setminus B_r(0)) \cap \BB H \right) \right]^{q p} \right)^{1/q} \BB E\left( \exp\left( \sqrt 2 \frac{qp}{q-1} \sup_{s \in [r,1]} |h_s(0)|  \right) \right)^{1-1/q}
\alle
The process $t\mapsto h_{e^{-t}}(0)$ is a standard linear Brownian motion, so the second factor on the right of~\eqref{eqn-free-bdy-moment-split} is finite for any $p > 0$ and $q>1$. 
The proof of~\cite[Proposition 2.3]{rhodes-vargas-review} shows that the total mass of the $\gamma$-LQG measure associated with a free-boundary GFF on $\BB D$ has a finite moment of some positive order. Changing coordinates to $\BB H$ and using the LQG coordinate change formula~\cite[Proposition 2.1]{shef-kpz} shows that $\mu_h(\BB D\cap \BB H)$ has a finite moment of some positive order. Hence the right side of~\eqref{eqn-free-bdy-moment-split} is finite for small enough $p$. 
\end{proof}

\begin{proof}[Proof of Lemma~\ref{arealastcell}]
Recall the 3-dimensional Bessel process $X_t $ from Definition \ref{wedgedef}, defined so that $h_{e^{-t}}(0)   = -  X_{2t} +  Q t$ for $t\geq 0$. Using that $X_t$ has the law of the modulus of a three-dimensional Brownian motion, we get that for each $c \in (0,e^{k/2})$,
\begin{align*}
\BB{P}\left[ \min_{e^k \leq t \leq e^{k+1}} X_t < c \right]
  \preceq  \BB{P}\left[ \min_{1 \leq t \leq e} X_t < e^{-k/2} c \right]^3  \preceq
 e^{-3k/2} c^3 
\end{align*}
where here $\preceq$ denotes inequality up to a constant that does not depend on $k$ or $c$.  
Therefore, for $k_0 \in \BB N$,
\allb \label{eqn-bessel-max}
\BB{P}[\text{$X_t < t^{1/3}$ for some $t \geq e^{k_0}$}] 
 &\leq \sum_{k = k_0}^{\infty} \BB{P}\left[ \min_{e^k \leq t \leq e^{k+1}} X_t < (k+1)^{1/3} \right]  \notag \\
 &\preceq \sum_{k = k_0}^{\infty} e^{-3k/2} (k+1)
\preceq e^{-k_0/2} .
\alle
For $p\in (0,1)$, the function $x\mapsto x^p$ is concave, hence subadditive. Combining this with~\eqref{eqn-bessel-max} (applied with $k_0$ proportional to $-\log\log(\ep^{-1} 2^{-\ep^{-\zeta}})$) shows that with polynomially high probability as $\ep\rta 0$,  
\allb \label{eqn-bessel-sum}
 &\left[\mu_{h}\left(   B_{\ep^{-1} 2^{-\ep^{-\xi}}}(0) \cap \BB{H}  \right)\right]^p \notag \\
 &\qquad \leq \sum_{\ell = \lfloor \ep^{-\xi} \log{2} +  \log{\ep} \rfloor}^{\infty} \left[\mu_{h}\left( \left(   B_{e^{-\ell}}(0) \backslash B_{e^{-\ell-1}}(0) \right) \cap \BB{H}  \right)\right]^p \notag  \\
 &\qquad \leq \sum_{\ell = \lfloor \ep^{-\xi} \log{2} +  \log{\ep} \rfloor}^{\infty} e^{-\sqrt{2} p \min_{\ell \leq t \leq \ell + 1} (X_{2t} - Q t)} \left[\mu_{h_2}\left( \left(   B_{e^{-\ell}}(0) \backslash B_{e^{-\ell-1}}(0) \right) \cap \BB{H}  \right)\right]^p \qquad \text{(Def.~\ref{wedgedef})} \notag \\
 &\qquad \leq \sum_{\ell = \lfloor \ep^{-\xi} \log{2} +  \log{\ep} \rfloor}^{\infty} e^{-\sqrt{2} p \min_{\ell \leq t \leq \ell + 1} X_{2t} } \left[\mu_{h_2 + Q \ell}\left( \left(   B_{e^{-\ell}}(0) \backslash B_{e^{-\ell-1}}(0) \right) \cap \BB{H}  \right)\right]^p\notag   \\
 &\qquad \leq \sum_{\ell = \lfloor \ep^{-\xi} \log{2} +  \log{\ep} \rfloor}^{\infty} e^{-\sqrt{2} p (2\ell)^{1/3}} \left[\mu_{h_2 + Q \ell}\left( \left(   B_{e^{-\ell}}(0) \backslash B_{e^{-\ell-1}}(0) \right) \cap \BB{H}  \right)\right]^p \quad \text{(by~\eqref{eqn-bessel-max})} .
\alle

By the LQG coordinate change formula~\cite[Proposition 2.1]{shef-kpz} and the scale invariance of the law of $h_2$, 
\eqb \label{eqn-h2-scale}
\mu_{h_2 + Q \ell}\left( \left(   B_{e^{-\ell}}(0) \backslash B_{e^{-\ell-1}}(0) \right) \cap \BB{H}  \right) \eqD \mu_{h_2  }\left( \left(   B_1(0) \backslash B_{e^{-1}}(0) \right) \cap \BB{H}  \right) .
\eqe 
By Lemma~\ref{lem-free-bdy-moment}, we can choose $p  \in (0,1)$ such that  
\eqb \label{eqn-h2-moment}
\BB E\left( \left[ \mu_{h_2  }\left( \left(   B_1(0) \backslash B_{e^{-1}}(0) \right) \cap \BB{H}  \right) \right]^p \right) < \infty . 
\eqe  

Since~\eqref{eqn-bessel-sum} holds with probability at least $1-O_\ep(\ep^\alpha)$ for some $\alpha > 0$, the Chebyshev inequality shows that  
\begin{align*}
&\BB P\left(  \mu_{h}\left(   B_{\ep^{-1} 2^{-\ep^{-\xi}}}(0) \cap \BB{H}  \right)  > \ep \right) \notag\\
&\qquad \leq \ep^{-p} 
\sum_{\ell = \ep^{-\xi} \log{2} +  \log{\ep}}^{\infty} e^{-\sqrt{2} p (2\ell)^{1/3}} \BB{E}\left( \left[\mu_{h_2 + Q \ell}\left( \left(   B_{e^{-\ell}}(0) \backslash B_{e^{-\ell-1}}(0) \right) \cap \BB{H}  \right)\right]^p  \right) + O_\ep(\ep^\alpha) \\
&\qquad \preceq \ep^{-p} \sum_{\ell = \lfloor \ep^{-\xi} \log{2} +  \log{\ep} \rfloor}^{\infty} e^{-\sqrt{2} p (2\ell)^{1/3}} + O_\ep(\ep^\alpha)  \quad \text{by \eqref{eqn-h2-scale} and \eqref{eqn-h2-moment}} .
\end{align*}
Since this last sum decays faster than any power of $\ep$, we conclude the proof. 
\end{proof} 

Next, we turn to the event~\ref{item-lastcell-absorb} above.  That this event has polynomially high probability as $\ep \rightarrow 0$ follows from the following basic SLE estimate.
 
\begin{prop}
Let $\kappa \geq 8$ and let $\eta$ be a chordal SLE$_\kappa$ from 0 to $\infty$ in $\BB H$. It holds with polynomially high probability as $\ep \rta 0$, uniformly over all $R > 0$, that $\eta$ hits every point of $B_R(0) \cap \BB{H}$ before exiting $ B_{  R / \ep}(0) $.
\label{slecor}
\end{prop}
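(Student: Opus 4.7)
The plan is to use the scale invariance of chordal $\text{SLE}_\kappa$ from $0$ to $\infty$ in $\BB H$ to reduce the statement to the case $R = 1$, and then to prove the reduced assertion via a multi-scale argument based on the domain Markov property of SLE that gives many approximately-independent ``chances'' for the event to occur. For the reduction: for any $R > 0$, the rescaled process $\tilde\eta(t) := R^{-1} \eta(R^2 t)$ is again a chordal $\text{SLE}_\kappa$ from $0$ to $\infty$ in $\BB H$, and letting $T_r$ denote the first exit time from $B_r(0)$, a direct computation gives $T_{R/\ep}(\eta) = R^2 T_{1/\ep}(\tilde\eta)$ and $\eta([0, T_{R/\ep}(\eta)]) = R \cdot \tilde\eta([0, T_{1/\ep}(\tilde\eta)])$, so the event in the proposition has the same probability for all $R > 0$. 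This simultaneously reduces the problem to $R = 1$ and yields the claimed uniformity in $R$; it therefore suffices to show that for some $p > 0$, $\BB P[B_1(0) \cap \BB H \subset K_{T_{1/\ep}}] \geq 1 - O_\ep(\ep^p)$, where $K_t$ is the capacity-$t$ hull (for $\kappa \geq 8$ the curve is space-filling, so this matches the ``hits'' formulation).

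Next I would introduce multi-scale chances: for $k \in \BB N_0$ set $\tau_k := T_{2^{k+1}}$ and
\[
D_k := \bigl\{ B_{2^{k-1}}(0) \cap \BB H \subset K_{\tau_k} \bigr\}.
\]
If any $D_k$ with $k \in [1, K]_{\BB Z}$, $K := \lfloor \log_2(1/\ep) \rfloor - 1$, holds, then $\tau_k \leq T_{1/\ep}$ and by monotonicity of $t \mapsto K_t$ we get $B_1 \cap \BB H \subset B_{2^{k-1}} \cap \BB H \subset K_{\tau_k} \subset K_{T_{1/\ep}}$, giving the target event. By scale invariance, $\BB P[D_k] = \BB P[D_0] =: p_0$ for every $k$, and $p_0 > 0$ because the space-filling SLE has positive probability to swallow $B_{1/2}(0) \cap \BB H$ before first exiting $B_2(0)$. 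Together with a uniform conditional lower bound $\BB P[D_k \mid \mcl F_{\tau_{k-1}}] \geq p_0/2$ (where $\mcl F_t := \sigma(\eta|_{[0,t]})$), iterated conditioning yields $\BB P\bigl[\bigcap_{k=1}^K D_k^c \bigr] \leq (1 - p_0/2)^K \leq \ep^{p}$ with $p := -\log_2(1 - p_0/2) > 0$, completing the proof.

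The main obstacle is establishing this uniform conditional lower bound on $\BB P[D_k \mid \mcl F_{\tau_{k-1}}]$. By the domain Markov property of SLE, the continuation $(\eta(\tau_{k-1} + t))_{t \geq 0}$ is chordal $\text{SLE}_\kappa$ from $\eta(\tau_{k-1})$ to $\infty$ in the slit domain $\BB H \setminus K_{\tau_{k-1}}$, with $K_{\tau_{k-1}} \subset \overline{B_{2^k}(0)} \cap \overline{\BB H}$. To recover approximately the unconditional probability $p_0$ at the larger scale $2^{k+1}$ of $D_k$, I would use Koebe-type distortion estimates for the conformal map $\BB H \setminus K_{\tau_{k-1}} \to \BB H$ to show that, uniformly in the random slit, the image of $B_{2^{k-1}}(0) \cap \BB H$ still contains a half-disk of comparable size near the image of the origin. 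Combined with conformal invariance of SLE, this will yield the desired uniform conditional bound.
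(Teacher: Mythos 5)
Your overall architecture is the same as the paper's: reduce to a single scale, then iterate a uniform conditional ``filling'' estimate over $\asymp \log \ep^{-1}$ geometrically spaced scales using the domain Markov property, so that the failure probability is $(1-p)^{\asymp \log\ep^{-1}} = \ep^{q}$. (The paper does exactly this, applying its Lemma~\ref{slelemma} at the scales $8^j R$; your reduction to $R=1$ by scale invariance, the bookkeeping of the events $D_k$, and the remark that ``hull contains'' equals ``curve hits'' for $\kappa\ge 8$ are all fine.) The gap is that the one nontrivial ingredient --- the almost sure uniform lower bound on $\BB P[D_k \mid \mcl F_{\tau_{k-1}}]$, which is the entire content of Lemma~\ref{slelemma} --- is not proved, and the sketch you give for it would fail. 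First, your distortion claim points the wrong way: to produce a positive-probability covering event for the continuation, you need an \emph{upper} containment, namely that the image under the centered Loewner map $g_{\tau_{k-1}} - W_{\tau_{k-1}}$ of the not-yet-covered part of $B_{2^{k-1}}(0)\cap\BB H$ is \emph{contained in} a half-disk of comparable size around $0$. Showing that the image \emph{contains} a comparably sized half-disk is both false in general (the uncovered set can be a thin sliver pinched against the hull, whose conformal image contains no macroscopic half-disk; also ``the image of the origin'' is undefined, since $0\in K_{\tau_{k-1}}$) and useless for the conclusion even if true. Second, you never control the image of the outer semicircle $\bdy B_{2^{k+1}}(0)\cap\BB H$, which is essential because $D_k$ requires the covering to occur before the exit time of $B_{2^{k+1}}(0)$: the sufficient event is that the image curve fills a half-disk containing the image of the uncovered set before hitting the image of $\bdy B_{2^{k+1}}(0)\cap\BB H$, and this has uniformly positive probability only if those two image sets are separated at a definite relative scale.

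With your choice of consecutive radii in ratio $2$ (hull contained in $\overline{B_{2^k}(0)}$, outer radius $2^{k+1}$), the standard estimate $|g_t(z)-z|\le 3\cdot 2^k$ for hulls of radius $2^k$ (\cite[Corollary 3.44]{lawler-book}) gives no separation at all between the image of the region to be covered and the image of the outer semicircle. This is precisely why the paper spaces its scales by a factor of $8$: then $\bdy B_r(0)\cap\BB H$ maps into $B_{4r}(0)$ while $\bdy B_{8r}(0)\cap\BB H$ maps outside $B_{5r}(0)$, after which conformal invariance, the Markov property, scaling, and the positive probability for SLE$_\kappa$ to fill $B_{4r}(0)\cap\BB H$ before exiting $B_{5r}(0)$ give the uniform conditional bound. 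To repair your argument, replace the ``contains a half-disk'' claim by these two containments and widen the ratio between consecutive scales (or prove a genuinely finer distortion estimate); as written, the key step does not go through.
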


Proposition~\ref{slecor} will follow from the following lemma together with the Markov property of SLE. 

\begin{lem}
For $r >0$, let $\sigma_r$ be the first time $\eta$ exits $B_r(0)$. Almost surely, the conditional probability given $\eta|_{[0,\sigma_r]}$ that $\eta$ hits every point of $B_{r}(0) \cap \BB H$ before leaving $B_{8r}(0)$ is at least some $p > 0$ which does not depend on $r$.   
\label{slelemma}\end{lem}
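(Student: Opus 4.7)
The plan is to combine three ingredients: scale invariance of SLE$_\kappa$, the domain Markov property, and a uniform conformal distortion estimate.

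First, by the scale invariance of chordal SLE$_\kappa$ (the law of $r^{-1}\eta$ agrees with that of $\eta$ up to a time change), it will suffice to establish the lemma with uniform constant in the case $r = 1$. Next, by the domain Markov property of SLE, conditional on $\eta|_{[0, \sigma_1]}$, the continuation $(\eta(\sigma_1 + t))_{t \geq 0}$ has the law of a chordal SLE$_\kappa$ from $\eta(\sigma_1)$ to $\infty$ in the unbounded connected component $D$ of $\BB H \setminus \eta([0, \sigma_1])$. A key observation is that $\eta([0, \sigma_1]) \subset \overline{B_1(0)}$, so the annulus $B_8(0) \setminus \overline{B_1(0)}$ is entirely contained in $D$, and $\eta(\sigma_1)$ is at Euclidean distance at least $7$ from $\bdy B_8(0)$ within $D$.

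I then plan to use a conformal map $\psi : D \to \BB H$ with $\psi(\eta(\sigma_1)) = 0$ and $\psi(\infty) = \infty$. By conformal invariance of SLE, $\psi \circ \eta|_{[\sigma_1, \infty)}$ is a chordal SLE$_\kappa$ from $0$ to $\infty$ in $\BB H$, and the event to lower bound becomes the event that this SLE fills $\psi(B_1(0) \cap D)$ before exiting $\psi(B_8(0) \cap D)$. The main task here is to show the existence of deterministic sets $K_1 \subset K_2 \subset \overline{\BB H}$, with $K_1$ compactly contained in the interior of $K_2$, such that almost surely $\psi(B_1(0) \cap D) \subset K_1$ and $K_2 \subset \psi(B_8(0) \cap D)$. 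The Beurling projection theorem, applied in $D$ to $\bdy D \setminus B_8(0)$ (which is at distance at least $7$ from $\eta(\sigma_1)$), will bound the harmonic measure of this set from $\eta(\sigma_1)$ above by a universal constant $c < 1$; pushed forward by $\psi$, this yields a uniform lower bound on how much of $\BB H$ near $0$ is contained in $\psi(B_8(0) \cap D)$. A symmetric extremal-length argument will bound $\psi(B_1(0) \cap D)$ inside a fixed half-disc.

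Once such deterministic sets $K_1 \subset K_2$ are in hand, the final step is a standard SLE$_\kappa$ estimate: using continuity of the curve and recurrence of the driving Brownian motion, the probability that a chordal SLE$_\kappa$ in $\BB H$ from $0$ to $\infty$ fills $K_1$ before exiting $K_2$ is a positive deterministic constant, giving the desired lower bound $p$ independent of $r$ and of the past. The main obstacle I anticipate is the uniform conformal distortion control: the curve $\eta|_{[0, \sigma_1]}$ can in principle be spiky or come arbitrarily close to $\bdy B_1(0)$, so without a Beurling-type harmonic-measure bound it would not be clear that the conformal map $\psi$ does not distort the relevant regions into pathological shapes.
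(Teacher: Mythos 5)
Your overall skeleton matches the paper's: condition on $\eta|_{[0,\sigma_r]}$, use the domain Markov property and conformal invariance to reduce to a single SLE$_\kappa$ in $\BB H$, and conclude with the (standard, for $\kappa\geq 8$) fact that such a curve has positive probability to fill a fixed half-ball before exiting a slightly larger one. The problem is in the middle step, which is exactly the crux. Your map $\psi:D\to\BB H$ with $\psi(\eta(\sigma_1))=0$ and $\psi(\infty)=\infty$ is only determined up to composition with a positive scaling, so the central claim --- existence of \emph{deterministic} sets $K_1\subset K_2\subset\overline{\BB H}$ with $\psi(B_1(0)\cap D)\subset K_1$ and $K_2\subset\psi(B_8(0)\cap D)$ almost surely --- is not even well posed, and for any fixed bounded $K_2$ it can be defeated by rescaling $\psi$. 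Moreover, the tools you propose to control the distortion are conformally invariant: a Beurling-type bound on harmonic measure (which, as written, you take ``from $\eta(\sigma_1)$'', a boundary point of $D$, where harmonic measure is not defined) and an extremal-length bound are unchanged if $\psi$ is rescaled, so they cannot by themselves pin down Euclidean containments in the image half-plane. To make your plan work you must first fix a normalization of the uniformizing map, and having done so you need a quantitative statement tying that normalization to Euclidean distortion.

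The paper resolves precisely this point by taking $\psi$ to be the hydrodynamically normalized Loewner map $g_{\sigma_r}$ (recentered by the driving function) and invoking the standard estimate $|g_{\sigma_r}(z)-z|\leq 3r$ for $z\in\BB H\setminus\eta([0,\sigma_r])$ (\cite[Corollary 3.44]{lawler-book}), which is valid because the hull $\eta([0,\sigma_r])$ has radius at most $r$. This immediately gives the deterministic containments you were after: $g_{\sigma_r}$ maps $B_r(0)\cap\BB H$ (minus the hull) into $B_{4r}(0)$ and maps $\bdy B_{8r}(0)\cap\BB H$ outside $B_{5r}(0)$, and then conformal invariance, the Markov property, and the positive probability that SLE$_\kappa$ fills $B_{4r}(0)\cap\BB H$ before leaving $B_{5r}(0)$ (uniform in $r$ by scaling) finish the proof. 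So your approach is repairable, but the repair consists of replacing the Beurling/extremal-length step by the normalized Loewner-map distortion estimate, at which point the harmonic-measure machinery is unnecessary.
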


\begin{proof} 
We parametrize $\eta$ so that the half-plane capacity of $\eta([0,t])$ is $2t$.
Let $\{g_t\}_{t\geq 0}$ be the Loewner maps associated with $\eta$, with the hydrodynamic normalization. 
By \cite[Corollary 3.44]{lawler-book}, we have $|g_{\sigma_r}(z) - z| \leq 3r$ for each $z\in \BB H\setminus \eta([0,\sigma_r])$. Thus, the semicircle $\bdy B_{r}(0) \cap \BB{H}$ is mapped by $g_{\sigma_r}$ into  $B_{4r}(0) \cap \BB{H}$, and the semicircle $\bdy B_{8r}(0) \cap \BB{H}$ is mapped by $g_t$ into the complement of $B_{5r}(0) \cap \BB{H}$.  The result now follows from conformal invariance and the domain Markov property since $\eta$ has positive probability to hit every point of $B_4(0)\cap \BB H$ before leaving $B_5(0)$. 
\end{proof}

\begin{proof}[Proof of Proposition~\ref{slecor}]
Applying Lemma~\ref{slelemma} to $r = 8^j R$ for $0 \leq j \leq \left\lfloor \frac{\log{\ep^{-1}}}{\log 8} \right\rfloor$, we deduce that the probability in question is at least $1 - (1-p)^{\left\lfloor \frac{\log{\ep^{-1}}}{\log 8} \right\rfloor} \geq 1 - \ep^q$ for some constant $q$, as desired.
\end{proof}

\begin{proof}[Proof of Proposition~\ref{lastcell}]
Applying Proposition~\ref{slecor} with $R = 2^{-\ep^{-\xi}}$, we deduce that with polynomially high probability as $\ep\rta 0$, the SLE$_8$ curve $\eta$ absorbs $B_{2^{-\ep^{-\xi}}}(0) \cap \BB{H}$ before hitting $\bdy B_{\ep^{-1} 2^{-\ep^{-\xi}}}(0) \cap \BB{H}$. Combining this with Lemma~\ref{arealastcell} yields the desired result.
\end{proof}

\begin{proof}[Proof of Proposition~\ref{allsteps}]
Combine Propositions~\ref{polysteps} and~\ref{lastcell}.
\end{proof}

\subsection{Finishing the proof}

To finish the proof of Proposition~\ref{prop4.4modified}, we show that $\eta(0,1]$ is unlikely to contain the point  $\ep^{-\delta} i/2$, so that the path from $0$ to $\ep^{-\delta} i/2$ in $\BB{H}$ yields a path from $0$ to the upper boundary $\overline{\partial}_{\ep}(0,1]$ of $\eta(0,1]$.  We will deduce this fact from Proposition~\ref{slecor} and the following probablistic LQG-area lower bound.

\begin{lem} \label{lem-area}
It holds with polynomially high probability as $\ep \rta 0$ that $\mu_h(B_{\ep^{-\delta/2}}(0)\cap \BB H) \geq 1$.
\end{lem}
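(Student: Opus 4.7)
The plan is to exploit the decomposition $h = h_1 + h_2$ from Definition~\ref{wedgedef} together with the scale invariance of $h_2$ and the criticality identity $\sqrt{2}Q = 3$ that is characteristic of the $Q$-quantum wedge. I will decompose the ball into dyadic annuli, show that each annulus contributes (in distribution) a log-Gaussian multiple of a fixed GMC mass, and then combine a Brownian motion estimate with negative-moment estimates for Gaussian multiplicative chaos.

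Setting $R = \ep^{-\delta/2}$, I first decompose $B_R(0)\cap\BB H \supset \bigsqcup_{k=0}^{K} A_k$ where $A_k := (B_{2^{k+1}}(0)\setminus B_{2^k}(0))\cap\BB H$ and $K = \lfloor \log_2 R \rfloor - 1$. On a polynomially high-probability event controlling the oscillation of $B$ on each interval $[2k\log 2, 2(k+1)\log 2]$, the radial field $h_1$ is approximately the constant $c_k := B_{2k\log 2} - Qk\log 2$ on $A_k$. Using the scale invariance $h_2(\lambda\cdot)\stackrel{d}{=}h_2$ together with the LQG coordinate change, I have $\mu_{h_2}(A_k) \stackrel{d}{=} 2^{\sqrt{2}Qk}\mu_{h_2}(A_0) = 2^{3k}\mu_{h_2}(A_0)$. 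Combining with the approximation of $h_1$ gives the distributional identity
\[
\mu_h(A_k) \stackrel{d}{\approx} e^{\sqrt{2}c_k} \mu_{h_2}(A_k) \stackrel{d}{=} e^{\sqrt{2} B_{2k\log 2}} \mu_{h_2}(A_0),
\]
where the $2^{3k}$ scaling factor exactly cancels the drift $e^{-\sqrt 2 Qk\log 2}=2^{-3k}$ from $h_1$; this clean cancellation is a manifestation of the criticality of the $\alpha=Q$ wedge.

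Next, I will produce the polynomial probability lower bound by combining two independent sources of randomness (using that $h_1$ and $h_2$ are independent by Definition~\ref{wedgedef}). For $h_2$: standard negative-moment estimates for sub-critical GMC give $\BB P[\mu_{h_2}(A_0) < t]\leq C_q t^q$ for every $q>0$, so $\mu_{h_2}(A_0) \geq \ep^{o(1)}$ with polynomially high probability. For the Brownian motion part: I will use the $K\sim\log\ep^{-1}$ available scales together with a running-maximum estimate to ensure that $B_{2k^*\log 2}$ is not too negative at some $k^*\in[0,K]$ with sufficient probability. Put together, the product $e^{\sqrt 2 B_{2k^*\log 2}}\mu_{h_2}(A_0)\geq 1$ with the claimed polynomial probability, whence $\mu_h(B_R(0)\cap\BB H)\geq\mu_h(A_{k^*})\geq 1$.

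The main obstacle I expect is managing the joint dependence across scales: the family $\{\mu_{h_2}(A_k)\}_k$ is built from the same field $h_2$, and the values $\{B_{2k\log 2}\}_k$ come from the same Brownian motion $B$. I will handle this by conditioning on $h_2$ (so that $\{\mu_{h_2}(A_k)\}_k$ becomes a fixed sequence) and applying the Brownian estimate under this conditioning, then integrating out using the negative-moment bound for $\mu_{h_2}(A_0)$. A subsidiary technical issue is controlling the oscillation of $B$ and $X$ on intervals of length $O(1)$ simultaneously over all $k\leq K$, which can be done by a union bound over $O(\log\ep^{-1})$ events, each holding with superpolynomial probability by a standard Gaussian and Bessel-tail estimate.
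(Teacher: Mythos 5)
Your multi-scale setup (dyadic annuli, the cancellation $2^{\sqrt 2 Q k}\cdot 2^{-\sqrt 2 Qk}$ coming from $\sqrt 2 Q=3$, independence of $h_1$ and $h_2$, and choosing the scale $k^*$ using only the Brownian motion so that the distributional identity for $\mu_{h_2}(A_{k^*})$ may still be invoked) is sound, but the final combination step does not and cannot produce a \emph{polynomially} high probability, and this is a genuine gap. The two legs of your argument are quantitatively incompatible. If you take the GMC input in the form ``$\mu_{h_2}(A_0)\geq \ep^{\beta}$ except on an event of probability $O(\ep^p)$'' (which is what your negative-moment bound $\BB P[\mu_{h_2}(A_0)<t]\leq C_q t^q$ gives), then at the chosen scale you need $B_{2k^*\log 2}\geq \tfrac{\beta}{\sqrt 2}\log(1/\ep)$; but the Brownian motion only runs for time of order $K\asymp \log(1/\ep)$, so reaching a level of order $\log(1/\ep)$ is an event of \emph{polynomially small} probability, not polynomially high. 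If instead you only require $B_{2k^*\log 2}$ to be ``not too negative'' (which is free, since $B_0=0$), then you need the lateral mass of that single annulus to be at least a constant, an event whose probability is bounded away from $1$ uniformly in $\ep$. Even the optimal interpolation fails: the bottleneck is $\BB P\left[\max_{s\leq T}B_s < \lambda\right]\asymp \lambda/\sqrt T$ with $T\asymp \log(1/\ep)$, which is never $O(\ep^p)$ for any choice of $\lambda$ compatible with the GMC lower tail. Conditioning on $h_2$ and integrating out does not help, because after conditioning the entire burden rests on one Brownian running maximum over a logarithmic horizon.

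The missing ingredient is a mechanism that multiplies many nearly independent chances: with $\asymp\log(1/\ep)$ annuli available, one needs the events $\{2^{-3k}\mu_{h_2}(A_k)\geq c_0\}$ (say for even $k$, after an approximate scale decomposition of the lateral field) to be close enough to independent that they fail simultaneously with probability at most $(1-c)^{c'\log(1/\ep)}=\ep^{c''}$; your argument never uses more than one annulus worth of $h_2$-randomness, so it cannot get the failure probability below a constant (or, after optimizing, below a power of $1/\log(1/\ep)$). This is also genuinely different from the paper's route, which does not use a Brownian running maximum at all: it works with the single outermost annulus $\left(B_{\ep^{-\delta/2}}(0)\setminus B_{\ep^{-\delta/2}/2}(0)\right)\cap\BB H$, bounds the radial part of the field on that annulus, and extracts the polynomial leverage from the scaling identity $\mu_{h_2}(\text{annulus at scale }\ep^{-\delta/2})\eqD \ep^{-\sqrt 2 Q\delta/2}\,\mu_{h_2}\left((B_1(0)\setminus B_{1/2}(0))\cap\BB H\right)$ combined with the lower tail of the unit-scale mass. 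A smaller point: controlling the oscillation of $B$ on an interval of length $O(1)$ by a fixed constant is only a constant-probability event, so your union bound over $O(\log \ep^{-1})$ scales must allow the oscillation bound to grow slowly in $\ep$; this is harmless but should be stated.
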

\begin{proof}
We can write
\allb \label{eqn-area-split}
&\mu_h(B_{\ep^{-\delta/2}}(0)\cap \BB H)  \notag \\
&\qquad\geq \mu_h\left( \left( B_{\ep^{-\delta/2}}(0) \setminus B_{\ep^{-\delta/2}/2}(0)\right) \cap \BB H \right) \notag\\
&\qquad \geq \exp\left( \sqrt 2 \inf_{t\in [\log(\ep^{\delta/2}) , \log(2\ep^{\delta/2})]} h_{e^{-t}}(0) \right) \mu_{h_2}\left( \left( B_{\ep^{-\delta/2}}(0) \setminus B_{\ep^{-\delta/2}/2}(0)\right) \cap \BB H \right) . 
\alle
Recall from Definition~\ref{wedgedef} that $h_{e^{-t}}(0) = B_{-2t} + Q t$ for $t<0$, where $B$ is a standard Brownian motion independent from $h_2$. By the Gaussian tail bound, for each $\alpha > 0$, it holds with polynomially high probability as $\ep\rta 0$ that 
\eqb \label{eqn-area-radial}
\inf_{t\in  [\log(\ep^{\delta/2}) , \log(2\ep^{\delta/2})]} h_{e^{-t}}(0) \geq  (Q - \alpha)\log(\ep^{\delta/2}) .
\eqe
By the LQG coordinate change formula~\cite[Proposition 2.1]{shef-kpz} and the scale invariance of the law of $h_2$,   
\eqbn
\mu_{h_2}\left( \left( B_{\ep^{-\delta/2}}(0) \setminus B_{\ep^{-\delta/2}/2}(0)\right) \cap \BB H \right)
 \eqD (\ep^{-\delta/2})^{\sqrt 2 Q} \mu_{h_2}\left( ( B_1(0)\setminus B_{1/2}(0) ) \cap \BB H     \right)  .
\eqen
By a standard lower tail estimate for the LQG measure (using, e.g.,~\cite[Lemma 4.6]{shef-kpz} and a comparison between $h_2$ and a zero-boundary GFF on $\BB H$ away from $\bdy\BB H$), for each $\alpha' > 0$, it holds with polynomially high probability as $\ep \rta 0$ that 
\eqb \label{eqn-area-meanzero}
\mu_{h_2}\left( \left( B_{\ep^{-\delta/2}}(0) \setminus B_{\ep^{-\delta/2}/2}(0)\right) \cap \BB H \right) \geq  \ep^{\alpha'-\sqrt 2 Q \delta/2}
\eqe
Plugging~\eqref{eqn-area-radial} and~\eqref{eqn-area-meanzero} into~\eqref{eqn-area-split} concludes the proof.
\end{proof}

\begin{proof}[Proof of Proposition~\ref{prop4.4modified}]
By Proposition~\ref{slecor}, it holds with polynomially high probability as $\ep \rta 0$ that $\eta$ absorbs $B_{\ep^{-\delta/2}}(0) \cap \BB{H}$ before hitting $\bdy B_{\ep^{-\delta}/2}(0) \cap \BB{H}$. By Lemma~\ref{lem-area},  the set $B_{\ep^{-\delta/2}}(0) \cap \BB{H}$ has $\mu_h$-mass at least $1$ with polynomially high probability as $\ep \rta 0$. Since $\eta$ is parametrized by $\mu_h$-mass, it holds with polynomially high probability as $\ep  \rta 0$ that $\eta[0,1]$ does not contain the point $\ep^{-\delta} i/2$. This means that each path of cells from 0 to $\ep^{-\delta} i/2$ must include a cell corresponding to a vertex of the upper boundary $\overline{\partial}_{\ep}(0,1]$.
Proposition~\ref{allsteps} therefore implies that with polynomially high probability as $\ep\rta 0$, one has $\op{dist}(\ep , \ol\bdy_\ep(0,1] ; \mcl G^\ep|_{(0,1]}) \leq \ep^{-1/(d-\zeta)}$, as desired.  
\end{proof}
 
\section{Open problems}
\label{sec-open-problems}

Here we list a few of the many interesting open problems related to the results of this paper. 
Suppose we are in the setting of Theorem~\ref{main3}, so that $\{X_m\}_{m\in\BB N}$ are the sequence of growing clusters of external DLA on the infinite spanning-tree-weighted random planar map $(M,v_0)$, started from $v_0$ and targeted at $\infty$. 
Theorem~\ref{main3} gives the growth exponent for the diameter of external DLA clusters w.r.t.\ the ambient graph distance on $M$. 
It is also of interest to understand the diameter of the cluster $X_m$ with respect to its \emph{internal} graph distance, i.e., we view $X_m$ as a tree (without regard for its embedding into $M$) and consider the graph distance in this tree. 
 
\begin{prob}[Internal diameter of DLA clusters] \label{prob-internal-diameter}
Show that the following limit exists and compute its value:
\eqbn
\lim_{m\rta\infty} \frac{\log \op{diam} (X_m)}{\log m} .
\eqen
\end{prob}

Currently, we do not even know that $\op{diam} (X_m)$ grows sublinearly in $m$. To prove this, one would need to show that the maximum over all $v\in X_m$ of the harmonic measure from $\infty$ of $v$ in $M\setminus X_m$ tends to zero as $m\rta\infty$ (see, e.g., the arguments in~\cite{benjamini-yadin-dla}). 

Another interesting question about the geometry of DLA on $M$ is the following. 

\begin{prob}[One-endedness] \label{prob-one-ended}
Is external DLA on the UITM almost surely one-ended? That is, if we let $X_\infty := \bigcup_{m=1}^\infty X_m$, is it almost surely the case that $X_\infty \setminus B_r^{X_\infty}(v_0)$ has exactly one infinite connected component for every $r>0$? 
\end{prob}

Our arguments are very specific to the spanning-tree-weighted random planar map due to the relationship between DLA and LERW in this setting. 
However, it is also of interest to study DLA on other random planar maps.

\begin{prob}[DLA on other maps] \label{prob-other-maps}
What can be said about external DLA on other natural infinite random planar maps, like the uniform infinite planar quadrangulation/triangulation~\cite{angel-schramm-uipt,krikun-maps} or the infinite-volume limits of planar maps decorated by bipolar-orientations~\cite{kmsw-bipolar} or the Ising model~\cite{ams-ising-local}?
\end{prob}

\bibliography{cibib,cibiblong,dlasupplementarybib}
\bibliographystyle{hmralphaabbrv}

\end{document}